\newcommand\bigw{\mathop{\raisebox{0.2ex}{\scalebox{.85}[0.85]{$\bigwedge$}}}\!}
\numberwithin{equation}{section}
\newcommand{\thmref}[1]{Theorem~\ref{#1}}
\newcommand{\secref}[1]{Section~\ref{#1}}
\newcommand{\lemref}[1]{Lemma~\ref{#1}}
\newcommand{\propref}[1]{Proposition~\ref{#1}}
\newcommand{\corref}[1]{Corollary~\ref{#1}}
\newcommand{\figref}[1]{Figure~\ref{#1}}
\newcommand{\rmkref}[1]{Remark~\ref{#1}}
\newtheorem{theorem}{Theorem}[section]
\newtheorem{lemma}[theorem]{Lemma}
\newtheorem{proposition}[theorem]{Proposition}
\newtheorem{corollary}[theorem]{Corollary}
\theoremstyle{definition}
\newtheorem{notation}[theorem]{Notation}
\theoremstyle{remark}
\newtheorem{remark}[theorem]{Remark}
\newcommand{\fg}{\mathfrak{g}}
\newcommand{\gl}{\mathfrak{gl}}
\newcommand{\U}{{\rm U}}
\newcommand{\End}{{\rm End}}
\begin{document}

\title[Higher-order quantum Casimir elements]{Higher-order Casimir elements and hook partitions \\  for quantum groups of types $\mathsf{B}$, $\mathsf{C}$ and $\mathsf{D}$}
 \author[Yanmin Dai]{Yanmin Dai}
\address{\it Yanmin Dai: School of Mathematics and Statistics, Henan University of Science and Technology, Henan, 471000, China}
\email{yanmindai@haust.edu.cn}

\author[Yang Zhang]{Yang Zhang}
\address{\it Yang Zhang (corresponding author):  School of Mathematics and Physics, The University of Queensland, St Lucia, QLD 4072, Australia}
\email{yang.zhang@uq.edu.au}

\keywords{Quantum groups; quantum Casimir elments; hook partitions}
\subjclass{17B37;  20G42; 05E10}

\begin{abstract}

The higher-order quantum Casimir elements, introduced by Zhang, Bracken, and Gould in the early 1990s, were conjectured to generate the centre of the Drinfeld–Jimbo quantum (super)groups.
This was previously confirmed in the classical type $\mathsf{A}$. In this paper we extend the result to  the classical types $\mathsf{B}$, $\mathsf{C}$, $\mathsf{D}$, with the additional inclusion of quantum Casimir elements arising from spin and half-spin representations in types $\mathsf{B}$ and $\mathsf{D}$.  We identify the Harish–Chandra images of these elements and  reinterpret them uniformly  in terms of irreducible characters associated with hook partitions. This yields explicit and minimal generating sets for the centres in all classical types, and provides new connections between higher-order quantum Casimir elements and hook partitions that exhibit a stability phenomenon.
\end{abstract}
  
\maketitle

\vspace{-0.5cm}

\section{Introduction}

\subsection{Brief history} 
 Let $\mathrm{U}_q(\mathfrak{g})$ denote the Drinfeld-Jimbo quantum group \cite{Dri87,Jim85} associated with a finite-dimensional complex  simple Lie algebra $\mathfrak{g}$, where $q$ is an indeterminate. Understanding the structure of the centre $\mathcal{Z}(\mathrm{U}_q(\mathfrak{g}))$ is crucial for the representation theory of quantum groups and has important implications in various fields such as knot theory \cite{RT91}, statistical mechanics \cite{Bax82},  and conformal field theory \cite{DMS97}.

There are several explicit constructions of central elements of quantum groups in the literature.  For the quantum general linear group $\mathrm{U}_q(\mathfrak{gl}_n)$, Jimbo \cite{Jim86} obtained central elements using the fused $R$-matrix. Faddeev, Reshetikhin, and Takhtajan \cite{FRT90} described a generating set for the centre $\mathcal{Z}(\mathrm{U}_q(\mathfrak{gl}_n))$, although without including a proof. More generally, Drinfeld \cite{Dri90} constructed  central elements for arbitrary quasi-triangular Hopf algebras via the universal $R$-matrix. Based on this, Reshetikhin \cite{Res90} explained how invariants of certain tangles naturally yield central elements in a ribbon Hopf algebra. Applying Drinfeld’s universal $R$-matrix, Zhang, Bracken, and Gould \cite{ZGB91} constructed a family of central elements $C_{V,\ell}$ for quantum groups, associated with positive integer $\ell$ and  any finite-dimensional representation $V$.  These elements, which we refer to as \emph{higher-order quantum Casimir elements}, play a central role in what follows.

The algebraic structure of the centre  is described by a quantum analogue of the Harish-Chandra map \cite{Ros90,Tan92}, which identifies the centre with the character ring of the Lie algebra $\mathfrak{g}$.  Joseph and Letzter \cite{JL94} further analysed the ad-finite part of $\mathrm{U}_q(\mathfrak{g})$ under the adjoint action, showing that it decomposes into blocks indexed by dominant integral weights, and each block contains a unique line defining a central element.  Using Joseph–Letzter’s description, Baumann \cite{Bau98} proved that certain central elements constructed by Reshetikhin \cite{Res90} generate  the centre in types $\mathsf{A}$  and $\mathsf{C}$, while in types $\mathsf{B}$ and $\mathsf{D}$ they generate a subalgebra of the centre fixed by an involution.

Recently, there has been a growing interest in constructing explicit generators  for the centre of quantum groups, motivated by advances in mathematical physics and probability \cite{CGRS16,Kua18},  but  primarily focused on type $\mathsf{A}$ \cite{Li10, LXZ18, Dai22, KZ23}. The construction of Zhang, Bracken and Gould \cite{ZGB91} gives rise to two kinds of generating sets for the centre $\mathcal{Z}(\mathrm{U}_q(\mathfrak{g}))$, each consisting of  elements $C_{V,\ell}$:
(i) with the  order $\ell$ fixed, and the representation $V$ varying; or (ii) with $V$ fixed, and $\ell$ varying. It is well known \cite{Dri90, Res90} that the centre   is generated by elements $C_{\varpi_i}$ indexed by the fundamental representations $\varpi_i$ of $\mathfrak{g}$. This corresponds to the first kind of generating set, with the order $\ell=1$ and $V$ ranging over all fundamental representations.  

The second kind of generating set is more subtle and intriguing. In type $\mathsf{A}_n$, it was shown in \cite{Li10} that the centre  is generated by higher-order quantum Casimir elements $C_{V,\ell}, 1\leq \ell\leq n,$ associated with the natural representation $V$. These elements are quantum analogues of the classical higher-order Casimir elements $\sum_{i_1=1}^n \cdots \sum_{i_\ell=1}^n E_{i_1i_2}E_{i_2i_3}\cdots E_{i_{\ell-1}i_{\ell}} E_{i_{\ell}i_1}$ with $\ell\geq 1$ in the universal enveloping algebra $\mathrm{U}(\mathfrak{gl}_n)$, where $E_{ij}$ are the matrix units of $\mathfrak{gl}_n$. Remarkably, the eigenvalues of the higher-order quantum Casimir elements on simple modules can be expressed through irreducible $\mathfrak{gl}_n$-characters associated with hook partitions (see \figref{fig: youngdia}).

The purpose of this paper is to construct a generating set for the centre in terms of higher-order quantum Casimir elements for  the remaining classical types $\mathsf{B}$, $\mathsf{C}$ and $\mathsf{D}$, thus presenting a comprehensive picture of the connections between higher-order quantum Casimir elements and irreducible characters associated with hook partitions in all classical types.

\subsection{Main results and approach} 
We are concerned with quantum groups associated with classical Lie algebras  of types $\mathsf{B}_n\, (n\geq 2)$, $\mathsf{C}_n\,(n\geq 3)$ and $\mathsf{D}_n\, (n\geq 4)$. The higher-order quantum Casimir elements in the sense of Zhang, Bracken, and Gould (\secref{sec: highCas}) are of the form $C_{V,\ell}$, where $1\leq \ell\leq n$ and  $V$ is fixed to be the natural representation for each type.

 Our first main result, stated  in \thmref{thm: CentreGen},  is  summarised as follows. For type $\mathsf{C}_n\,(n\geq 3)$,  the centre is generated by higher-order quantum Casimir elements $C_{V,\ell}$ for $1\leq \ell\leq n$.  In type $\mathsf{B}_n\,(n\geq 2)$, the  centre  is generated by $C_{V,\ell}$ for $1\leq \ell\leq n-1$,  together with the order-one quantum Casimir element $C_{S,1}$, where $S$ is the spin representation.  In type $\mathsf{D}_n\,(n\geq 4)$, the  centre  is generated by $C_{V,\ell}$ for $1\leq \ell\leq n-2$, together with the order-one quantum Casimir elements  $C_{S_+,1}$ and  $C_{S_-,1}$, where $S_+$ and $S_{-}$ denote the two half-spin representations.   

To prove this result, we analyse the images of these higher-order quantum Casimir elements under the quantum Harish-Chandra isomorphism. These images are determined by the eigenvalues of the higher-order quantum Casimir elements acting on highest weight modules, which can be derived from results in \cite{DGL05}. We then reformulate these images in terms of irreducible characters of $\mathfrak{g}$ associated with hook partitions (see Theorems \ref{thm: ChGtypeB}, \ref{thm: ChGtypeC} and \ref{thm: ChGtypeD}). This reformulation is inspired by the type $\mathsf{A}$ case \cite{Li10}, but requires new techniques due to the more intricate nature of representation theory in types $\mathsf{B}$, $\mathsf{C}$ and $\mathsf{D}$.  Finally, we apply invariant theory for the Weyl group $W$ of $\mathfrak{g}$, together with the Jacobi-Trudy identities in the theory of symmetric functions, to complete the proof.

A new idea in our approach is the introduction of  an auxiliary expression $H_{n,k}$, which enables a representation-theoretic interpretation of the   higher-order quantum Casimir elements. For any integer $k\geq 0$, this expression is defined uniformly for all classical types by
\[H_{n,k}:= e^{\rho+k\varepsilon_1} \prod_{\alpha\in \Phi_{\varepsilon_1}^+}(1-q^{-2(\alpha, \varepsilon_1)}e^{-\alpha}), 
\]   
where $\rho$ denotes the half sum of positive roots, and the product is taken over all positive roots $\alpha$ satisfying $(\alpha, \varepsilon_1) \neq 0$, where $\varepsilon_1$ is the highest weight of the natural representation. Applying the antisymmetriser of the Weyl group of $\mathfrak{g}$ to $H_{n,k}$ yields, up to a factor of the Weyl denominator, a linear combination of irreducible characters indexed by hook partitions. This reformulation not only provides the desired representation-theoretic interpretation, but also exhibits a representation stability phenomenon  \cite{CF13} (see \rmkref{rem: stab}). This result is of independent interest and may have further applications, for instance in the theory of special functions \cite{Eti99}.

The construction of higher-order quantum Casimir elements extends naturally to quantum supergroups \cite{ZG91}, and their eigenvalues on highest weight  modules have been determined explicitly for $\mathrm{U}_q(\mathfrak{gl}_{m|n})$ and $\mathrm{U}_q(\mathfrak{osp}_{m|2n})$ \cite{LZ93, DGL05}. More recently, the Harish-Chandra isomorphism for quantum supergroups has been established in \cite{LWY22}, providing a precise description for the  algebraic structure of the centre of quantum supergroups.  We expect our representation-theoretic approach, especially the uniform construction of $H_{n,k}$,  applies immediately to  quantum supergroups. This would yield new insights into the connections between higher-order quantum super Casimir elements and irreducible characters of Lie superalgebras associated with partitions of certain types. We leave these for future work.

This paper is organised as follows. In Section \ref{sec: QuanGrp}, we cover the necessary definitions and the construction of  higher order quantum Casimir elements. In Section \ref{sec: eigenvalue}, we  derive their eigenvalues when acting on simple modules and hence determine their images under the Harish-Chandra isomorphism. In Section \ref{sec: repform}, we establish a bridge between the eigenvalues of these quantum  Casimir elements and the characters of finite-dimensional representations associated with hook partitions.   Finally, in Section \ref{sec: centre},  we prove the main theorem, which provides an explicit description of the generating set for the center in terms of these higher-order quantum Casimir elements.

\subsection{Notation} Throughout the paper, we write $\mathbb{Z}$ for the set of integers, $\mathbb{Q}$ for the set of rational numbers, and $\mathbb{C}$ for the set of complex numbers. 
For any real number $x$, we denote by $\lfloor x\rfloor$ the largest integer less than or equal to $x$. For any two integers $i,j$, let $\delta_{i,j}$ denote the Kronecker delta function, which equals 1 if $i=j$ and 0 otherwise.   To distinguish objects of types $\mathsf{B}_n$, $\mathsf{C}_n$ and $\mathsf{D}_n$, we append the corresponding subscript to the symbol. For instance, $\rho_{\mathsf{B}_n}$, $\rho_{\mathsf{C}_n}$ and $\rho_{\mathsf{D}_n}$ denote the half-sum of positive roots in types $\mathsf{B}_n$, $\mathsf{C}_n$, and $\mathsf{D}_n$, respectively.
Unless otherwise stated in the context, a symbol without referring to a specific type will be understood to apply to all three types.

\vspace{0.2cm}
\noindent{\bf Acknowledgment.}  We are grateful to Mark Gould,  Ole Warnaar and Ruibin Zhang for valuable discussions and suggestions. 

\section{Preliminaries}\label{sec: QuanGrp} 

\subsection{The Drinfeld-Jimbo quantum group} We refer to \cite{Jan96} for basics on the Drinfeld-Jimbo quantum group. Let $\mathfrak{g}$ be a finite-dimensional simple Lie algebra of rank $n$ over $\mathbb{C}$, $\mathfrak{h}$ a fixed Cartan subalgebra, and $\Phi$ the associated root system. Fix a positive root system $\Phi^+$ and simple roots $\Pi=\{\alpha_1, \dots, \alpha_n\}$.  Let $\{h_1, \dots, h_n\}$ be a basis of $\mathfrak{h}$ and $\{\varepsilon_1, \dots, \varepsilon_n\}$ its dual basis in $\mathfrak{h}^*$ satisfying $\varepsilon_i(h_j)=\delta_{ij}$ for $1\leq i, j\leq n$. 

Define a non-degenerate symmetric bilinear form $(\cdot, \cdot)$ on $\mathfrak{h}^*$ by $(\varepsilon_i, \varepsilon_j)=\delta_{ij}$ for $1\leq i,j\leq n$. 
For types $\mathsf{B}_n\,(n\geq 2), \mathsf{C}_n\,(n\geq 3),$ and $\mathsf{D}_n\,(n\geq 4)$, the positive root system $\Phi^+$,  simple root system $\Pi=\{\alpha_1, \dots, \alpha_n\}$, and the half sum $\rho$ of positive roots are given as follows.
\begin{align*}
\text{Type } \mathsf{B}_n:\quad 
& \Phi^+ = \{ \varepsilon_i \pm \varepsilon_j \mid 1 \leq i < j \leq n \} \cup \{ \varepsilon_i \mid 1 \leq i \leq n \}, \\
& \Pi = \{ \varepsilon_1 - \varepsilon_2,\ \varepsilon_2 - \varepsilon_3,\ \dots,\ \varepsilon_{n-1} - \varepsilon_n,\ \varepsilon_n \}, \quad \rho= \sum_{i=1}^n(n-i+\frac{1}{2})\varepsilon_i;\\ 
\text{Type } \mathsf{C}_n:\quad 
& \Phi^+ = \{ \varepsilon_i \pm \varepsilon_j \mid 1 \leq i < j \leq n \} \cup \{ 2\varepsilon_i \mid 1 \leq i \leq n \}, \\
& \Pi = \{ \varepsilon_1 - \varepsilon_2,\ \dots,\ \varepsilon_{n-1} - \varepsilon_n,\ 2\varepsilon_n \}, \quad\quad \quad \quad   \rho=\sum_{i=1}^n (n-i+1)\,\varepsilon_i;\\
\text{Type } \mathsf{D}_n:\quad 
& \Phi^+ = \{ \varepsilon_i \pm \varepsilon_j \mid 1 \leq i < j \leq n \}, \\
& \Pi = \{ \varepsilon_1 - \varepsilon_2,\ \dots,\ \varepsilon_{n-1} - \varepsilon_n,\ \varepsilon_{n-1} + \varepsilon_n \}, \quad \ \rho=\sum_{i=1}^n (n-i)\,\varepsilon_i.
\end{align*}
Let $W$ denote the Weyl group of $\Phi$ generated by simple reflections $s_{\alpha_i}, 1\leq i\leq n,$ defined by 
\[s_{\alpha_i}(v)= v- 2 \frac{(v,\alpha_i)}{(\alpha_i,\alpha_i)}\alpha_i, \quad v\in \mathfrak{h}^*.  \]

For $1\leq i\leq n$, denote by $\varpi_i$ the fundamental weight such that $2(\varpi_i, \alpha_j)/ (\alpha_j, \alpha_j)=\delta_{ij}$ for all $1\leq j\leq n$. Let  $P=\bigoplus_{i=1}^{n} \mathbb{Z}\varpi_i$ be the weight lattice  and $P^{+}=\bigoplus_{i=1}^{n} \mathbb{Z}_{\geq 0}\varpi_i$ be the set of dominant integral weights of $\mathfrak{g}$. Let $m_{o}$ be the minimal positive integer such that $(\lambda, \mu)m_{o} \in\mathbb{Z}$ for all $\lambda, \mu\in P$. Let $\hat{q}=q^{\frac{1}{m_o}}$ be an indeterminate, and let $\mathbb{K}=\mathbb{C}(\hat{q})$  be the field of rational functions  in $\hat{q}$. 
For any $\lambda, \mu\in P$, the expression $q^{(\lambda, \mu)}$ is understood as $\hat{q}^{(\lambda, \mu)m_o}$.

The Drinfled-Jimbo quantum group ${\rm U}_q(\fg)$  is a unital associative algebra over $\mathbb{K}$ generated by $E_i,F_i,K_{\lambda}$ for $1\leq i\leq n$ and  $\lambda\in P$, subject to the relations:  
\begin{align*}
&K_0=1, \quad K_{\lambda}K_{\mu}= K_{\lambda+\mu},  \\ 
&K_{\lambda}E_iK_{-\lambda}= q^{(\lambda, \alpha_i)}E_i,\\ 
&K_{\lambda}F_iK_{-\lambda}= q^{-(\lambda, \alpha_i)}F_i,\\ 
&E_iF_i- F_iE_i= \delta_{ij} \frac{K_{\alpha_i}-K_{-\alpha_i}}{q_i-q_i^{-1}}, \\
&  \sum_{k=0}^{1-a_{ij}}(-1)^k
\begin{bmatrix}1-a_{ij}\\ k\end{bmatrix}_{q_i}
E_i^{\,1-a_{ij}-k}E_jE_i^{\,k}=0\quad(i\ne j),\\ 
&  \sum_{k=0}^{1-a_{ij}}(-1)^k
\begin{bmatrix}1-a_{ij}\\ k\end{bmatrix}_{q_i}
F_i^{\,1-a_{ij}-k}F_jF_i^{\,k}=0\quad(i\ne j),
\end{align*}
where $q_i=q^{(\alpha_i, \alpha_i)/2}$ for $1\leq i\leq n$, $a_{ij}= 2(\alpha_i, \alpha_j)/(\alpha_i, \alpha_i)$ for $1\leq i,j \leq n$, and 
\[
[m]_{q_i}=\dfrac{q_i^{\,m}-q_i^{-m}}{q_i-q_i^{-1}},\quad [m]_{q_i}!=[m]_{q_i}\cdots [1]_{q_i},\quad 
\begin{bmatrix}n\\ k\end{bmatrix}_{q_i}
=\frac{[n]_{q_i}!}{[k]_{q_i}!\,[n-k]_{q_i}!}    \]
 for any integers $m\geq 0$ and $n\geq k\geq 0$. It is well-known that ${\rm U}_q(\fg)$ is a Hopf algebra, with the co-multiplication $\Delta$, co-unit $\epsilon$, and antipode $S$ defined by 
\begin{eqnarray*}
&\Delta(K_{\lambda})=K_{\lambda}{\otimes}K_{\lambda}, \quad
\Delta(E_{i})=K_{\alpha_i}{\otimes}E_{i}{+}E_{i}{\otimes}1, \quad
\Delta(F_{i})=F_{i}{\otimes}K_{\alpha_i}^{-1}{+}1{\otimes}F_i,\\
&\epsilon (K_{\lambda})=1,\quad \epsilon (E_{i})=0,
\quad \epsilon({F_{i}})=0,\\
&S(K_{\lambda})=K_{\lambda}^{-1},\quad S(E_{i})={-}K_{\alpha_i}^{{-}1}E_{i},\quad S(F_{i})={-}F_{i}K_{\alpha_i}.
\end{eqnarray*}
for $\lambda \in P$ and $1\leq i\leq n$.

For convenience, we write $\U=\U_q(\fg)$. We denote by $\U^+$ (resp. $\U^-$) the subalgebra of $\U$ generated by all $E_i$ (resp. $F_i$),  and by $\U^0$ the subalgebra generated   by $K_{\lambda}$ for all $\lambda\in P$. By the PBW theorem, multiplication induces a vector space  isomorphism $\mathrm{U}^- \otimes \U^0 \otimes \U^+\cong \U$.

Throughout the paper, we are only concerned with type 1 finite-dimensional $\U$-modules, that is, modules $M$  with weight space decomposition $M= \oplus_{\lambda\in P} M_{\lambda}$, where $M_{\lambda}=\{v\in M \mid K_{\mu}v= q^{(\mu,\lambda)}v, \forall \mu\in P\}$. It is well known that every finite-dimensional $\U$-module is completely reducible,  and each finite-dimensional simple $\U$-module is a highest weight module with highest weight in $P^+$ \cite{Jan96}.

\subsection{The Harish-Chandra isomorphism}
 We shall describe the centre $\mathcal{Z}({\rm U})$ of ${\rm U}$ in terms of the quantum analogue of the  Harish-Chandra isomorphism \cite{Ros90, Tan92}.

Let $Q= \bigoplus_{i=1}^n \mathbb{Z}\alpha_i$ be the root lattice of $\mathfrak{g}$. Then, $\U=\bigoplus_{\nu\in Q}\U_{\nu}$ is a $Q$-graded algebra, where
\[
    \U_{\nu} =\{u\in \U\mid K_{\lambda}u K_{-\lambda}= q^{(\lambda,\nu)}u, \  \forall \lambda\in P \}. 
\]
In particular, we have $\U_0=\U^0 \oplus \bigoplus_{\nu>0} \U_{-\nu}^- \U^0 \U_{\nu}^+$, where  $\U_{\nu}^+= \U^+\cap \U_{\nu}$ and $\U_{-\nu}^-= \U^-\cap \U_{-\nu}$ for any positive $\nu\in Q$. Clearly, we have $\mathcal{Z}(\U)\subseteq \U_0$, and the linear  projection 
\[\pi: \U_0\longrightarrow \U^0 \]
is an algebra homomorphism.

Let ${\rm U}_{ev}^0$ be the subalgebra of ${\rm U}^0$ generated by  $K_{2\lambda}$ for all $\lambda\in P$. The Weyl group $W$ acts on  ${\rm U}^0$ by  $wK_{\lambda}= K_{w(\lambda)}$ for all $\lambda \in P$. We denote by  
\[
({\rm U}_{ev}^0)^{W}:=\{ h\in {\rm U}_{ev}^0 \mid wh=h,\, \forall w\in W \}. 
\]
the subalgebra of $W$-invariants in ${\rm U}_{ev}^0$. Define an algebra automorphism
\begin{equation*}
\gamma_{-\rho}:{\rm U}^0\to{\rm U}^0,\quad  K_{\mu}\mapsto q^{-(\rho,\mu)}K_{\mu}, \quad \mu \in P. 
\end{equation*}
Then the Harish-Chandra homomorphism $\gamma_{-\rho}\circ \pi: \U_0\rightarrow \U^0$ restricts to the centre $\mathcal{Z}(\U)\subseteq \U_0$. 

\begin{theorem}\label{thm:HCiso}\cite{Ros90,Tan92}
The  Harish-Chandra   homomorphism
 \begin{equation*}\label{eq: HCmap}
  \varphi:=\gamma_{-\rho}\circ\pi:\mathcal{Z}({\rm U})\rightarrow({\rm U}_{ev}^0)^W
 \end{equation*}
is an algebra isomorphism.
\end{theorem}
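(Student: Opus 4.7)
The plan is to establish the theorem by the three classical steps of a Harish--Chandra-type argument, adapted to the quantum setting as in Rosso and Tanisaki: well-definedness of $\varphi$ into $(\U_{ev}^0)^W$, injectivity, and surjectivity. The main tool throughout is the action of central elements on Verma modules $M(\lambda)$ and finite-dimensional simple modules $L(\lambda)$.

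\emph{Well-definedness and injectivity.} For $z \in \mathcal{Z}(\U)$ and $\lambda \in P$, write $\pi(z) = \sum_\mu c_\mu K_\mu$. The decomposition $\U_0 = \U^0 \oplus \bigoplus_{\nu>0} \U^-_{-\nu}\U^0\U^+_\nu$ together with the annihilation of the highest weight vector $v_\lambda$ of $M(\lambda)$ by each $\U^+_\nu$ forces $z$ and $\pi(z)$ to act on $v_\lambda$ by the same scalar $\chi_\lambda(z) = \sum_\mu c_\mu q^{(\mu,\lambda)}$, so that after the twist by $\gamma_{-\rho}$ one has $\chi_\lambda(z) = (\varphi(z))(\lambda + \rho)$ under the identification of $\U^0$ with exponential characters on $P$. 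The quantum linkage principle, proved by the $\U_q(\fsl_2)$-reduction along each simple root $\alpha_i$ (using $F_i^{\langle\lambda+\rho,\alpha_i^\vee\rangle}v_\lambda$ as a singular vector of weight $s_{\alpha_i}\cdot\lambda$ whenever $\lambda \in P^+$) then gives $\chi_\lambda(z) = \chi_{w\cdot\lambda}(z)$ for all $w \in W$ and $\lambda \in P^+$; by Zariski density this extends to honest $W$-invariance of $\varphi(z)$ as an element of $\U^0$. A parity constraint on the weights $\mu$ occurring in $\pi(z)$ further places $\varphi(z)$ in $\U_{ev}^0$. The same calculation delivers injectivity: if $\varphi(z) = 0$ then $z$ annihilates every $L(\lambda)$ with $\lambda \in P^+$, and since type-$1$ finite-dimensional modules separate elements of $\U$ we conclude $z = 0$.

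\emph{Surjectivity.} For each fundamental weight $\varpi_i$, the quantum trace $\mathrm{tr}_{V(\varpi_i)}\bigl((K_{2\rho}\otimes 1)\,R_{21}R_{12}\bigr)$, equivalently the order-one Zhang--Bracken--Gould element $C_{V(\varpi_i),1}$, defines a central element whose Harish--Chandra image, by a direct computation using the weight decomposition of $V(\varpi_i)$, is a nonzero scalar multiple of the quantum character $\sum_\mu (\dim V(\varpi_i)_\mu)\, K_{2\mu}$. Under the identification $\U_{ev}^0 \cong \K[2P]$ this corresponds to the ordinary character of $V(\varpi_i)$. Chevalley's theorem asserts that $\K[2P]^W$ is the polynomial algebra generated by these fundamental characters, so the images of the $C_{V(\varpi_i),1}$ under $\varphi$ already generate $(\U_{ev}^0)^W$; combined with injectivity this yields the desired isomorphism.

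The principal obstacle is establishing the quantum linkage principle in sufficient strength, namely that central characters coincide under the dot action of the full Weyl group rather than merely under simple reflections generated from a single dominant $\lambda$. The inductive reduction to the rank-one situation works whenever $\langle \lambda+\rho, \alpha_i^\vee\rangle$ is a nonnegative integer at each stage, and the dominance of $\lambda\in P^+$ together with a Zariski-density argument (treating $\chi_\lambda(z)$ as a regular function of $q^{(\alpha_i^\vee,\lambda)}$) provides precisely this. A secondary subtlety is the $\U_{ev}^0$ versus $\U^0$ distinction, which on the surjectivity side is automatic from the $K_{2\mu}$-factors in the quantum character, but on the well-definedness side requires a careful parity check of the support of $\pi(z)$.
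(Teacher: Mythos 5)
Your outline is correct and follows the standard argument of Rosso and Tanisaki (see also Jantzen, \emph{Lectures on Quantum Groups}, Ch.~6): $W$-invariance of the image via the $\U_q(\fsl_2)$-reduction to singular vectors plus Zariski density, injectivity from the fact that type-$1$ finite-dimensional modules separate points of $\U$, and surjectivity from the Harish-Chandra images $\sum_\mu \dim V(\varpi_i)_\mu\, K_{2\mu}$ of the order-one Casimir elements together with the polynomiality of the character ring on the fundamental characters. The paper itself gives no proof of this theorem --- it is quoted from \cite{Ros90,Tan92} --- and those sources argue exactly along the lines you describe, so there is nothing to add beyond noting that the separation-of-points fact and the parity statement placing the image in $\U^0_{ev}$ are themselves nontrivial lemmas that your sketch takes for granted.
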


\subsection{Higher-order quantum Casimir elements}\label{sec: highCas} 
We now introduce higher-order quantum Casimir elements of $\mathrm{U}_q(\mathfrak{g})$ following  \cite{ZGB91}. This makes use of the quasi-$R$-matrix  $\mathfrak{R}$ of $\U_q(\fg)$, which is an infinite sum of the form 
\[
 \mathfrak{R}= \sum_{i} F^{(i)}\otimes E^{(i)}, 
 \]
 where $F^{(i)}\in \U^-$ and $E^{(i)}\in \U^+$ for all $i$. We refer to \cite[Section 8.3.3]{KS12} for the explicit formula.

Let $\zeta: \U_q(\mathfrak{g})\rightarrow \mathrm{End}_{\mathbb{K}}(V)$ be a finite dimensional representation.
Define
\begin{align*}
\mathcal{R}_V:=(\zeta\otimes {\rm id})(\mathfrak{R}), \quad
\mathcal{R}_{V}^T:=(\zeta\otimes {\rm id})\mathfrak{R}^T,
\end{align*}
where $\mathfrak{R}^T:= \sum_{i}E^{(i)}\otimes F^{(i)}$. 
Then,  both $\mathcal{R}_{V}$ and $\mathcal{R}_{V}^T$ are finite sums in $\End(V)\otimes \U_q(\fg)$. 
Let $V=\bigoplus_{\mu\in \Pi(V)} V_{\mu}$ be the weight space decomposition, where $\Pi(V)$ is the set of all weights $\mu\in P$ such that $V_{\mu}\neq 0$.   For any weight $\mu$ of $V$, let $\mathsf{P}_{\mu}:V\to V_{\mu}$ be the linear projection. We  define 
\begin{align*}
&\mathcal{K}_{V}:=\sum_{\mu\in P}\mathsf{P}_{\mu}\otimes K_{\mu}\in \End(V)\otimes\U_q(\fg), \\ 
&L_V:=\mathcal{K}_{V}\mathcal{R}_{V}, \quad \quad L_V^T:=\mathcal{K}_{V}\mathcal{R}_{V}^T.
\end{align*}

\begin{lemma}\cite[Theorem 3.5]{Dai22}\label{lem: Gam}
For any finite dimensional representation $\zeta: \U_q(\mathfrak{g})\rightarrow \mathrm{End}_{\mathbb{K}}(V)$, define
\[\Gamma_{V}:=L_V^T L_V.\]
Then $\Gamma_{V}\in {\rm End}(V)\otimes \U_q(\fg)$ commutes with $(\zeta\otimes {\rm id})\Delta(x)$ for all $x\in\U_q(\fg)$.
\end{lemma}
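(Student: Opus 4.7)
The plan is to deduce the centrality of $\Gamma_V$ from the standard quasi-triangular identity for the universal R-matrix $R$ of $\U_q(\fg)$. Recall that $R$ admits a factorisation $R = \mathcal{K}\,\mathfrak{R}$, where the Cartan factor $\mathcal{K}$ acts on a tensor product of weight modules by $v_\lambda \otimes w_\mu \mapsto q^{(\lambda,\mu)}\,v_\lambda \otimes w_\mu$. This scalar is symmetric in $\lambda,\mu$, so the flip $\sigma$ fixes $\mathcal{K}$, giving
\[
R_{21} \;=\; \sigma(R) \;=\; \mathcal{K}\,\mathfrak{R}^T.
\]

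First I would apply $(\zeta \otimes \mathrm{id})$ to the two intertwining identities
\[
R\cdot \Delta(x) \;=\; \Delta^{\mathrm{op}}(x)\cdot R, \qquad R_{21}\cdot \Delta^{\mathrm{op}}(x) \;=\; \Delta(x)\cdot R_{21}, \qquad x\in \U_q(\fg),
\]
the second being the image of the first under $\sigma$ (using that $\sigma$ is an algebra homomorphism on $\U\otimes \U$ that swaps $\Delta$ and $\Delta^{\mathrm{op}}$). Using
\[
(\zeta \otimes \mathrm{id})(R) \;=\; \mathcal{K}_V\,\mathcal{R}_V \;=\; L_V, \qquad (\zeta \otimes \mathrm{id})(R_{21}) \;=\; \mathcal{K}_V\,\mathcal{R}_V^T \;=\; L_V^T,
\]
these identities become
\[
L_V\cdot (\zeta\otimes \mathrm{id})\Delta(x) \;=\; (\zeta\otimes \mathrm{id})\Delta^{\mathrm{op}}(x)\cdot L_V,
\]
\[
L_V^T\cdot (\zeta\otimes \mathrm{id})\Delta^{\mathrm{op}}(x) \;=\; (\zeta\otimes \mathrm{id})\Delta(x)\cdot L_V^T.
\]
Chaining them gives
\[
\Gamma_V\cdot (\zeta\otimes \mathrm{id})\Delta(x) \;=\; L_V^T\cdot (\zeta\otimes \mathrm{id})\Delta^{\mathrm{op}}(x)\cdot L_V \;=\; (\zeta\otimes \mathrm{id})\Delta(x)\cdot \Gamma_V,
\]
which is the desired commutation.

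The main point needing care is that $\mathfrak{R}$ is an infinite sum lying in a completion of $\U\otimes \U$, so a priori the identities above are only formal. To justify them I would invoke the weight-graded decomposition $\mathfrak{R} = \sum_{\nu\geq 0}\mathfrak{R}_\nu$ with $\mathfrak{R}_\nu \in \U^-_{-\nu}\otimes \U^+_\nu$: since $V$ is finite-dimensional, $\zeta$ annihilates $\U^-_{-\nu}$ for all but finitely many $\nu$, so that $\mathcal{R}_V$ and $\mathcal{R}_V^T$ are finite sums in $\End(V)\otimes \U_q(\fg)$ and the chained computation takes place entirely in this well-defined algebra. With that finiteness in hand, the intertwining identities and their composition are a formal consequence of the quasi-triangular axioms, reducing the argument to standard R-matrix machinery, as in the proof given in \cite[Theorem 3.5]{Dai22}.
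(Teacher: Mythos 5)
The paper does not prove this lemma itself; it simply cites \cite[Theorem 3.5]{Dai22}, and your argument is the standard quasi-triangularity proof that the cited reference uses: factor $R=\mathcal{K}\mathfrak{R}$, note $R_{21}=\mathcal{K}\mathfrak{R}^{T}$, push the two intertwining identities through $\zeta\otimes\mathrm{id}$ to get the $L_V$ and $L_V^{T}$ relations, and chain them. Your proof is correct, and you rightly flag the only real subtlety (that $\mathfrak{R}$ lives in a completion, resolved because $\zeta$ kills all but finitely many weight components); the one remaining detail worth a sentence in a careful write-up is checking that the paper's conventions for $\Delta$ and for $\mathfrak{R}=\sum_i F^{(i)}\otimes E^{(i)}$ do place the Cartan factor on the left, so that $(\zeta\otimes\mathrm{id})(R)$ is exactly $\mathcal{K}_V\mathcal{R}_V=L_V$ rather than $\mathcal{R}_V\mathcal{K}_V$.
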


We define the partial trace
\[
\begin{aligned}
&\textnormal{Tr}_1: \End(V)\otimes {\rm U}_q(\fg) \longrightarrow \U_q(\fg),\quad  f\otimes x\mapsto \text{Tr}(f)x
\end{aligned}
\]
for any $f\in\End(V), x\in {\rm U}_q(\fg)$, where ${\rm Tr}$ is the usual trace operator on $\End(V)$. For any integer  $\ell\geq 0$, we define the  \emph{quantum Casimir element} of order $\ell$ associated with $V$ to be 
\begin{equation*}
C_{V,\ell}={\rm Tr}_1\left((\zeta(K_{2\rho})\otimes 1)\left(\frac{\Gamma_{V}-1\otimes 1}{q-q^{-1}}\right)^{\ell}\right). 
\end{equation*}

\begin{proposition}
The quantum Casimir elements $C_{V,\ell}$ are central in $\U_q(\fg)$ for all $\ell\geq 0$. 
\end{proposition}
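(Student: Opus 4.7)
The plan is to combine Lemma~\ref{lem: Gam} with the standard \emph{partial quantum trace} construction for (quasi-)triangular ribbon Hopf algebras. Abstractly, the statement we need is: whenever $A \in \End(V) \otimes \U$ commutes with $(\zeta \otimes \mathrm{id})\Delta(x)$ for every $x \in \U$, the partial trace $\mathrm{Tr}_1((\zeta(K_{2\rho}) \otimes 1) A)$ lies in $\mathcal{Z}(\U)$. Granting this, the proposition follows at once: Lemma~\ref{lem: Gam} supplies such commutation for $A = \Gamma_V$, hence also for $\Gamma_V - 1 \otimes 1$ and for every power of it, and therefore for $A_\ell := \bigl((\Gamma_V - 1 \otimes 1)/(q-q^{-1})\bigr)^\ell$ with any integer $\ell \geq 0$.

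To prove the abstract statement, I would write $A = \sum_j b_j \otimes c_j$ and set $c := \sum_j \mathrm{Tr}(\zeta(K_{2\rho}) b_j)\, c_j$. To check $xc = cx$ for fixed $x \in \U$, one starts from the factorisation
\[
1 \otimes x = \sum_{(x)} (\zeta(S(x_{(1)})) \otimes 1)\,(\zeta \otimes \mathrm{id})\Delta(x_{(2)}),
\]
a direct consequence of the antipode axiom. Substituting into $xc = \mathrm{Tr}_1((\zeta(K_{2\rho}) \otimes 1)(1 \otimes x) A)$, using the hypothesised commutation to move $A$ past $(\zeta \otimes \mathrm{id})\Delta(x_{(2)})$, and then applying cyclicity of trace on the first tensor factor together with $y K_{2\rho} = K_{2\rho}\, S^2(y)$ (the identity $S^2 = \mathrm{Ad}(K_{-2\rho})$), one reaches, in three-fold Sweedler notation $\sum_{(x)} x_{(1)} \otimes x_{(2)} \otimes x_{(3)}$,
\[
xc = \sum_{(x)} \sum_j \mathrm{Tr}\bigl(\zeta(K_{2\rho})\,\zeta(S^2(x_{(2)})\, S(x_{(1)}))\, b_j\bigr)\,c_j\, x_{(3)}.
\]

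The decisive identity is then
\[
\sum_{(x)} S^2(x_{(2)})\,S(x_{(1)}) \otimes x_{(3)} = 1 \otimes x,
\]
obtained by noting that $S$ is an algebra anti-homomorphism, whence $S^2(x_{(2)})\, S(x_{(1)}) = S(x_{(1)} S(x_{(2)}))$, and invoking $\sum x_{(1)} S(x_{(2)}) = \epsilon(x)\, 1$. Applying the linear functional $\nu_j(y) := \mathrm{Tr}(\zeta(K_{2\rho})\, \zeta(y)\, b_j)$ to the first tensor factor of this identity collapses the triple sum and yields $xc = \sum_j \mathrm{Tr}(\zeta(K_{2\rho}) b_j)\,c_j\, x = cx$, as desired. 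The main obstacle is the careful bookkeeping of the three-fold Sweedler expansions and pinning down the correct sign in $S^2 = \mathrm{Ad}(K_{-2\rho})$; once these are in place, the calculation collapses cleanly. This is the classical partial quantum trace going back to Drinfeld~\cite{Dri90} and Reshetikhin~\cite{Res90}, with the higher-order variant originally considered in~\cite{ZGB91}.
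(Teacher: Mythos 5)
Your argument follows the paper's proof exactly: Lemma~\ref{lem: Gam} supplies the commutation of $\Gamma_V$ (hence of $\bigl((\Gamma_V-1\otimes 1)/(q-q^{-1})\bigr)^\ell$) with $(\zeta\otimes\mathrm{id})\Delta(x)$, and the remaining step is precisely the partial quantum trace lemma, which the paper simply cites as \cite[Proposition 1]{ZGB91}. Your self-contained derivation of that lemma --- the antipode factorisation of $1\otimes x$, cyclicity of the trace, $S^{2}=\mathrm{Ad}(K_{-2\rho})$, and the collapse $\sum_{(x)}S^{2}(x_{(2)})S(x_{(1)})\otimes x_{(3)}=1\otimes x$ --- is correct, so the proposal is sound and merely fills in the black box the paper defers to a reference.
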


\begin{proof}
By \lemref{lem: Gam}, $\Gamma_V$ commutes with $(\zeta\otimes {\rm id})\Delta(x)$ for all $x\in\U_q(\fg)$, so does the operator $\left((\Gamma_V-1\otimes 1)/(q-q^{-1})\right)^\ell$ for any integer $\ell\geq 0$.  It follows from \cite[Proposition 1]{ZGB91} that $C_{V,\ell}$ is central. 
\end{proof}

\begin{remark}\label{rmk: qdim}
 If $\ell=0$, then $C_{V,0}= \mathrm{Tr}(\zeta(K_{2\rho}))= \mathrm{qdim}(V)$, the quantum dimension of $V$.  
\end{remark}

A natural question is whether these quantum Casimir elements generate the centre $\mathcal{Z}(\U)$. Indeed,  $\mathcal{Z}(\U)$ is a polynomial algebra generated by  the algebraically independent elements
\[ C_{\varpi_i}:=C_{L(\varpi_i), 1}, \quad 1\leq i\leq n,  \]
 where $L(\varpi_i)$ denotes the $i$-th fundamental representation of $\U$; see, e.g., \cite{Dai22}. More generally, for any $\lambda\in P^+$, denote $C_{\lambda}:= C_{L(\lambda),1}$  and $C_{\lambda}^0:= \varphi(C_{\lambda})$. Then we have 
\[
C_{\lambda}^0=\sum_{\mu\in \Pi(L(\lambda))} m_{\lambda}(\mu)K_{2\mu}, 
\]
where $m_{\lambda}(\mu)= \mathrm{dim}\,  L(\lambda)_{\mu}$ (see, e.g., \cite[Proposition 3.8]{Dai22}). 

\begin{theorem}\label{thm: generators of Uev0}
 The the $W$-invariant subalgebra $(\U_{ev}^0)^W$ of $\U_{ev}^0$ is a polynomial algebra over $\mathbb{K}$   generated by   $C^0_{\varpi_1}, C^0_{\varpi_2},\dots, C^0_{\varpi_n}$.
\end{theorem}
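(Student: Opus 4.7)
The plan is to identify $(\U_{ev}^0)^W$ with the $W$-invariants in the group algebra of the weight lattice $P$, and then invoke classical invariant theory of finite crystallographic reflection groups.

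First, since $K_{2\lambda}K_{2\mu}=K_{2(\lambda+\mu)}$ for all $\lambda,\mu\in P$, the assignment $K_{2\lambda}\mapsto e^{\lambda}$ extends to a well-defined $\K$-algebra homomorphism
\[
\Phi:\U_{ev}^0\longrightarrow \K[P]=\bigoplus_{\lambda\in P}\K e^{\lambda}.
\]
Linear independence of the $K_{2\lambda}$ in $\U^0$ forces $\Phi$ to be an isomorphism, and comparing the actions $w\cdot K_{2\lambda}=K_{2w(\lambda)}$ and $w\cdot e^{\lambda}=e^{w(\lambda)}$ shows that $\Phi$ is $W$-equivariant. It therefore restricts to an isomorphism $(\U_{ev}^0)^W\cong \K[P]^W$.

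Next, using the formula $C_{\varpi_i}^0=\sum_{\mu\in\Pi(L(\varpi_i))}m_{\varpi_i}(\mu)K_{2\mu}$ stated just before the theorem, I read off
\[
\Phi(C_{\varpi_i}^0)=\sum_{\mu\in\Pi(L(\varpi_i))}m_{\varpi_i}(\mu)\,e^{\mu}=\chi_{\varpi_i},
\]
the formal character of the fundamental $\fg$-module $L(\varpi_i)$. I then appeal to the classical theorem of Bourbaki stating that, for a finite crystallographic reflection group acting on the weight lattice, $\K[P]^W$ is a polynomial $\K$-algebra freely generated by the fundamental characters $\chi_{\varpi_1},\ldots,\chi_{\varpi_n}$. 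This is standard: the orbit sums $m_\lambda$ for $\lambda\in P^+$ form a $\K$-basis of $\K[P]^W$, and each $\chi_{\varpi_i}$ equals $m_{\varpi_i}$ plus a $\Z_{\geq 0}$-linear combination of $m_\mu$ with $\mu<\varpi_i$ in the dominance order, yielding a unipotent triangular change of basis between orbit sums and characters. Pulling this conclusion back through $\Phi^{-1}$ gives $(\U_{ev}^0)^W=\K[C_{\varpi_1}^0,\ldots,C_{\varpi_n}^0]$ as a polynomial algebra in algebraically independent generators.

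I do not anticipate a genuine obstacle in executing this plan; the content is essentially bookkeeping together with a standard application of reflection group invariant theory. The only subtle point worth flagging is the use of $K_{2\lambda}$ rather than $K_\lambda$: the relabeling $2\lambda\leftrightarrow\lambda$ is precisely what makes the image of $\Phi$ the full group algebra $\K[P]$, placing the problem in exactly the setting to which Bourbaki's theorem directly applies.
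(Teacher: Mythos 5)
Your proposal is correct and follows essentially the same route as the paper, which states this theorem without a separate proof but sets up exactly your argument elsewhere: the $W$-equivariant identification $(\U_{ev}^0)^W\cong\mathbb{K}[2P]^W\cong\mathbb{K}[P]^W$ appears in the commutative diagram of Section~\ref{sec: centre}, the computation $C^0_{\varpi_i}\mapsto\chi(\varpi_i)$ is the displayed formula preceding the theorem, and the polynomiality of the character ring on the fundamental characters is Proposition~\ref{prop: charring} (cited to \cite{FH13}), which is the same classical reflection-group invariant theory you attribute to Bourbaki.
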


Another possible generating set of  $\mathcal{Z}(\U)$ contains  higher-order quantum Casimir elements, obtained  by fixing a module $V$ and varying $\ell$. In the remainder of this paper, we focus on  the following higher-order quantum Casimir  elements 
\[C_{n, \ell}:=C_{L(\varpi_1), \ell}, \quad \ell \geq 1, \]
 where $L(\varpi_1)$ is the natural module of $\U$.

\section{Eigenvalues of higher-order quantum  Casimir elements}\label{sec: eigenvalue}

Throughout this section, for any integer $n\geq 1$,  we define
\[\mathbf{I}_n:=\{-n, \dots, -1,1, \dots, n\}. \]
Let $L(\lambda)$ be a finite dimensional simple $\U_q(\mathfrak{g})$-module with highest weight $\lambda\in P^+$.   Since $C_{n,\ell}$ is central for any integer $\ell\geq 1$, it acts on $L(\lambda)$ as multiplication by  a scalar, called the \emph{eigenvalue} of $C_{n,\ell}$ on $L(\lambda)$ and denoted by $\omega_{\lambda}(C_{n,\ell})$.

\begin{proposition}\label{prop: eigval} \cite[Theorem 4.1]{DGL05}
For any $\lambda\in P^+$ and integer $\ell\geq 1$, the eigenvalues $\omega_{\lambda}(C_{n,\ell})$ for quantum groups of types $\mathsf{B}_n\,(n\geq 2), \mathsf{C}_n\, (n\ge 3), \mathsf{D}_n\, (n\ge 4)$ are given by
    \begin{equation*}
        \omega_{\lambda}(C_{n,\ell})=\sum_{a\in \mathbf{I}'_n}q^{c_n-(\varepsilon_a,\varepsilon_a)}f(a)\left(\frac{q^{(\varepsilon_a,2\rho+2\lambda+\varepsilon_a)-c_n}-1}{q-q^{-1}}\right)^{\ell}
            \prod_{b\in \mathbf{I}'_{n}\setminus\{a\}}\frac{q^{(\varepsilon_a,2\rho+2\lambda+\varepsilon_a)}-q^{(\varepsilon_b,2\rho+2\lambda-\varepsilon_b)}}{q^{(\varepsilon_a,2\rho+2\lambda+\varepsilon_a)}-q^{(\varepsilon_b,2\rho+2\lambda+\varepsilon_b)}}, 
    \end{equation*}
where $\varepsilon_{-a}:=-\varepsilon_a$ for $a>0$, $\varepsilon_0:=0$ in type $\mathsf{B}_n$, and  $\mathbf{I}'_{n}, c_n$ and $f(a)$ are defined as follows:
 \begin{align*}
 \text{Type }\mathsf{B}:\quad
&\mathbf{I}_n' = \mathbf{I}_n \cup \{0\}, 
&& c_n = 2n, 
&& f(a) = 1 + (1 - \delta_{a,0})(q - q^{-1})
   \frac{q^{(\varepsilon_a,\varepsilon_a+2\rho+2\lambda)}}
        {q^{(2\varepsilon_a,\varepsilon_a+2\rho+2\lambda)} - 1},
\\
\text{Type }\mathsf{C}:\quad
&\mathbf{I}_n' = \mathbf{I}_n,            
&& c_n = 2n + 1, 
&& f(a) = 1 + \frac{1 - q^{-2}}
               {q^{(2\varepsilon_a,\varepsilon_a+2\rho+2\lambda)} - 1},
\\
\text{Type }\mathsf{D}:\quad
&\mathbf{I}_n' = \mathbf{I}_n,            
&& c_n = 2n - 1, 
&& f(a) = 1 - \frac{q^2 - 1}
               {q^{(2\varepsilon_a,\varepsilon_a+2\rho+2\lambda)} - 1}.
 \end{align*}
\end{proposition}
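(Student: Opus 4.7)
The plan is to diagonalise the operator $\Gamma_V$ on $V\otimes L(\lambda)$ (with $V=L(\varpi_1)$ the natural module), and then take a partial quantum trace to extract the eigenvalue of $C_{n,\ell}=C_{V,\ell}$ on $L(\lambda)$. By \lemref{lem: Gam}, $\Gamma_V$ commutes with the coproduct action of $\U_q(\fg)$, so by Schur's lemma it acts as a scalar on every irreducible summand of $V\otimes L(\lambda)$. For $\lambda$ sufficiently regular dominant, one has the multiplicity-free decomposition
\[
V\otimes L(\lambda)=\bigoplus_{a\in \mathbf{I}'_n}L(\lambda+\varepsilon_a),
\]
with index set $\mathbf{I}'_n$ as in the statement; the general dominant case will then follow by algebraic continuation, since both sides of the claimed identity are rational in the $q$-exponentials of $(\lambda,\alpha_i)$.

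Next I would identify the eigenvalue $\chi_a(\lambda)$ of $\Gamma_V$ on $L(\lambda+\varepsilon_a)$. Using the classical fact that $\mathfrak{R}^T\mathfrak{R}$ acts on the $L(\nu)$-isotypic component of $L(\mu_1)\otimes L(\mu_2)$ by $q^{c_\nu-c_{\mu_1}-c_{\mu_2}}$ with $c_\mu=(\mu,\mu+2\rho)$, and tracking the Cartan factors entering $\Gamma_V=\mathcal{K}_V\mathcal{R}_V^T\mathcal{K}_V\mathcal{R}_V$, one obtains
\[
\chi_a(\lambda)=q^{c_{\lambda+\varepsilon_a}-c_\lambda-c_{\varepsilon_1}}=q^{(\varepsilon_a,2\rho+2\lambda+\varepsilon_a)-c_n},
\]
with $c_n=(\varepsilon_1,\varepsilon_1+2\rho)$ matching $2n$, $2n+1$, $2n-1$ in types $\mathsf{B}$, $\mathsf{C}$, $\mathsf{D}$ respectively. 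Denoting by $\Pi_a\colon V\otimes L(\lambda)\twoheadrightarrow L(\lambda+\varepsilon_a)$ the isotypic projector, the eigenvalue of $C_{n,\ell}$ on $L(\lambda)$ is therefore
\[
\omega_\lambda(C_{n,\ell})=\sum_{a\in \mathbf{I}'_n}\tau_a(\lambda)\left(\frac{\chi_a(\lambda)-1}{q-q^{-1}}\right)^\ell,\qquad \tau_a(\lambda):=\mathrm{Tr}_1\!\left((\zeta(K_{2\rho})\otimes 1)\Pi_a\right)\!\Big|_{L(\lambda)},
\]
where $\tau_a(\lambda)$ is a scalar on $L(\lambda)$ by $\U$-equivariance of the partial trace.

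The crux is to evaluate $\tau_a(\lambda)$ explicitly. I would express $\Pi_a=\prod_{b\neq a}(\Gamma_V-\chi_b)/(\chi_a-\chi_b)$ by Lagrange interpolation on the minimal polynomial of $\Gamma_V$; the denominator contributes $\prod_{b\neq a}(\chi_a-\chi_b)$, while the quantum trace of the numerator yields a ratio of quantum dimensions $\mathrm{qdim}\, L(\lambda+\varepsilon_a)/\mathrm{qdim}\, L(\lambda)$. Applying the quantum Weyl dimension formula and tracking the cancellations, this rearranges into
\[
q^{c_n-(\varepsilon_a,\varepsilon_a)}f(a)\prod_{b\in \mathbf{I}'_n\setminus\{a\}}\frac{q^{(\varepsilon_a,2\rho+2\lambda+\varepsilon_a)}-q^{(\varepsilon_b,2\rho+2\lambda-\varepsilon_b)}}{q^{(\varepsilon_a,2\rho+2\lambda+\varepsilon_a)}-q^{(\varepsilon_b,2\rho+2\lambda+\varepsilon_b)}},
\]
with $f(a)$ absorbing the boundary contributions from the paired index $b=-a$ (in types $\mathsf{C}$, $\mathsf{D}$) and from the zero weight $\varepsilon_0$ (in type $\mathsf{B}$). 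The main obstacle is precisely this last combinatorial identification: the cancellations in the quantum Weyl dimension formula must be executed type by type, and the correct $f(a)$ must emerge from the boundary factors — the long root $2\varepsilon_a$ in type $\mathsf{C}$, the short root $\varepsilon_a$ in type $\mathsf{B}$, and the degenerate pair $(\varepsilon_a,\varepsilon_{-a})$ in type $\mathsf{D}$. Once these identifications are complete, assembling the three ingredients yields the formula in the statement.
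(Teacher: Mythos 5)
Your outline is a correct (if incomplete) derivation, but it is worth noting that the paper does not actually prove this proposition: its ``proof'' is a citation to \cite[Theorem 4.1]{DGL05}, obtained by specialising the $\mathrm{U}_q(\mathfrak{osp}_{m|2n})$ formula at $n=0$ for the orthogonal types and by adapting the same method for $\mathrm{U}_q(\mathfrak{sp}_{2n})$, with the explicit computation deliberately omitted. What you have written is essentially a reconstruction of the method used in that reference (and in \cite{ZGB91, GZB91}): diagonalise $\Gamma_V$ on the generically multiplicity-free decomposition $V\otimes L(\lambda)=\bigoplus_a L(\lambda+\varepsilon_a)$, identify the eigenvalue $q^{c_{\lambda+\varepsilon_a}-c_\lambda-c_{\varepsilon_1}}$ via the standard action of $R_{21}R_{12}$ on isotypic components, and evaluate the twisted partial trace of the isotypic projectors. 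Your identification of $c_n=(\varepsilon_1,\varepsilon_1+2\rho)$ with $2n$, $2n+1$, $2n-1$ checks out in each type, and the continuation from regular dominant $\lambda$ is legitimate since $\omega_\lambda(C_{n,\ell})$ is a Laurent polynomial in the $q^{(\lambda,\varepsilon_i)}$. Two remarks on the crux step you leave open. First, a small imprecision: the scalar $\tau_a(\lambda)$ equals the ratio $\mathrm{qdim}\,L(\lambda+\varepsilon_a)/\mathrm{qdim}\,L(\lambda)$ outright, by the standard argument that the $K_{2\rho}$-twisted partial trace of an intertwiner is an intertwiner and that the full quantum trace of $\Pi_a$ is $\mathrm{qdim}\,L(\lambda+\varepsilon_a)$; it is not the ``numerator'' of the Lagrange interpolation that produces this ratio, although the interpolation form is indeed where the denominator $\prod_{b\neq a}(\chi_a-\chi_b)$ in the final product originates. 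Second, the identity equating this quantum-dimension ratio with $q^{c_n-(\varepsilon_a,\varepsilon_a)}f(a)\prod_{b\neq a}(\cdots)$ --- where the $b=-a$ factor and, in type $\mathsf{B}$, the $b=0$ factor collapse into $f(a)$ --- is precisely the ``lengthy computation'' that the paper declines to reproduce; acknowledging it as the main obstacle is fair, but as it stands your argument establishes the shape of the formula rather than the formula itself. In short: your route is the self-contained version of the proof the paper outsources, and completing it would require exactly the type-by-type Weyl-dimension bookkeeping you flag at the end.
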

\begin{proof}
The  construction of  higher-order Casimir elements  elements extends to the quantum supergroup $\mathrm{U}_q(\mathfrak{osp}_{m|2n})$ \cite{DGL05}, which specialises to $\mathrm{U}_q(\mathfrak{so}_m)$ when $n=0$ and to $\mathrm{U}_q(\mathfrak{sp}_{2n})$ when $m=0$.
The  formula in  \cite[Theorem 4.1]{DGL05} gives  explicit expressions for the eigenvalues of higher-order  Casimir elements in $\mathrm{U}_q(\mathfrak{osp}_{m|2n})$ for $m>2$. In particular, setting $n=0$ yields the eigenvalues $\omega_{\lambda}(C_{\lfloor m/2\rfloor,\ell})$ for $\mathrm{U}_q(\mathfrak{so}_m)$ with $m>2$. For $\mathrm{U}_q(\mathfrak{sp}_{2n})$, the method of \cite{DGL05} applies without essential change. We do not reproduce the lengthy computations here, but only state the resulting formula. 
\end{proof}

\begin{remark}
In \cite[Proposition 4]{GZB91}, the central element $C_{V, \ell}$ is constructed in a slightly different way  for any finite-dimensional $\mathrm{U}_q(\mathfrak{g})$-module $V$ and integer $\ell\geq 1$, with eigenvalues given explicitly in terms of quantum dimensions. In this paper, we do not compare the two versions of $C_{V, \ell}$ or their eigenvalues, and instead adopt the eigenvalue formula in \propref{prop: eigval} for all subsequent discussions, as this  extends more naturally to quantum supergroups, which we aim to pursue in future work. 
\end{remark}

For any integer $\ell\geq 1$, we write the image of $C_{n,\ell}$ under the Harish-Chandra isomorphism as 
 \[C^0_{n,\ell}:=\varphi(C_{n,\ell})\in (\U_{ev}^0)^W. \]
We define 
\[L_a:=q^{(\varepsilon_a,2\rho)}K_{2\varepsilon_a}, \quad \text{for each } a\in \mathbf{I}'_n.\]
Note also that $L_{-a}= L^{-1}_a$ for any $a\in \mathbf{I}'_n$. 
For any weight vector  $v_{\mu}$ with  $\mu\in P$, we have $L_{a}v_{\mu}= q^{(2\varepsilon_a,\, \mu+\rho)}v_{\mu}$.

\begin{lemma}\label{lem: C^0_nl}
For any integer $\ell\geq 1$, we have the following formulas:
\begin{align*}
\text{Type }\mathsf{B}_n (n\geq 2):
C^0_{n,\ell}
&=
\sum_{a\in\mathbf I_n}
  \frac{qL_a - q^{-1}L_a^{-1} + q - q^{-1}}{L_a - L_a^{-1}}
  \left(\frac{q^{1-2n}L_a - 1}{q - q^{-1}}\right)^{\!\ell}P_{n,a}+ \left(\frac{q^{-2n} - 1}{q - q^{-1}}\right)^{\!\ell},
\\
\text{Type }\mathsf{C}_n (n\geq 3):
C^0_{n,\ell}
&=
\sum_{a\in \mathbf{I}_n}
\frac{q^2L_a - q^{-2}L_a^{-1}}{L_a - L_a^{-1}}
\left(\frac{q^{-2n}L_a - 1}{q - q^{-1}}\right)^{\!\ell}P_{n,a},
\\
\text{Type }\mathsf{D}_n (n\geq 4):
C^0_{n,\ell}
&=
\sum_{a\in \mathbf{I}_n}
\left(\frac{q^{2-2n}L_a - 1}{q - q^{-1}}\right)^{\!\ell}P_{n,a},
\end{align*}
where 
\[
   P_{n,a}= \prod_{b\in\mathbf I_n\setminus\{\pm a\}}
    \frac{qL_a - q^{-1}L_b}{L_a - L_b}, \quad a\in \mathbf{I}_n. 
\] 
\end{lemma}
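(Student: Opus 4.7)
The plan is to pass from the eigenvalue formula of \propref{prop: eigval} to an identity in $(\U^0_{ev})^W$ using the Harish-Chandra isomorphism. Since $\varphi$ is injective and the evaluation characters $\mathrm{ev}_\lambda : K_\mu \mapsto q^{(\mu,\lambda+\rho)}$ satisfy $\mathrm{ev}_\lambda \circ \varphi = \omega_\lambda$ and separate elements of $(\U^0_{ev})^W$ as $\lambda$ ranges over $P^+$, it suffices to check that evaluating the claimed formula at each $\lambda$ reproduces $\omega_\lambda(C_{n,\ell})$. The key bridge is that $\mathrm{ev}_\lambda(L_a) = q^{(2\varepsilon_a,\lambda+\rho)}$, which gives $q^{(\varepsilon_a,2\rho+2\lambda+\varepsilon_a)} = q^{(\varepsilon_a,\varepsilon_a)}\mathrm{ev}_\lambda(L_a)$ and $q^{(\varepsilon_b,2\rho+2\lambda-\varepsilon_b)} = q^{-(\varepsilon_b,\varepsilon_b)}\mathrm{ev}_\lambda(L_b)$.

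First I would substitute these identifications into \propref{prop: eigval} and introduce $x_a := q^{(\varepsilon_a,\varepsilon_a)}L_a$ (so $x_a = qL_a$ in types $\mathsf{C}$ and $\mathsf{D}$, and in type $\mathsf{B}$ with $a = 0$ we have $x_0 = 1$). The large product then takes the form $\prod_{b \neq a}\frac{x_a - q^{-2(\varepsilon_b,\varepsilon_b)}x_b}{x_a - x_b}$. For each nonzero $a$, I would separate the $b = -a$ factor from this product, leaving the remaining product equal, up to a uniform power of $q$ coming from the denominators $x_a - x_b = q(L_a - L_b)$, to $P_{n,a}$. The leading scalar $q^{c_n - (\varepsilon_a,\varepsilon_a)}$, the factor $f(a)$, and the split-off $b = -a$ factor are then absorbed into the rational prefactor of $P_{n,a}$.

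In type $\mathsf{C}$, this combination becomes $q \cdot f(a) \cdot \frac{qL_a - q^{-1}L_a^{-1}}{L_a - L_a^{-1}}$; substituting $f(a) = \frac{q^2L_a^2 - q^{-2}}{q^2L_a^2 - 1}$ and using the factorisation $q^2L_a^2 - 1 = (qL_a - 1)(qL_a + 1)$ yields the claimed $\frac{q^2L_a - q^{-2}L_a^{-1}}{L_a - L_a^{-1}}$. In type $\mathsf{D}$ the identical procedure makes the analogous prefactor collapse to $1$. In type $\mathsf{B}$ with $a \neq 0$, the extra summand in $f(a)$ leads, after the same cancellations, to $f(a)(qL_a - q^{-1}L_a^{-1}) = qL_a - q^{-1}L_a^{-1} + q - q^{-1}$, matching the first summand of the claimed expression.

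The $a = 0$ term in type $\mathsf{B}$ is the only genuinely new point and will be handled separately. There $x_0 = 1$ and $f(0) = 1$, so the product specialises to $q^{2n}\prod_{b \in \mathbf{I}_n}\frac{1 - q^{-1}L_b}{1 - qL_b}$. The decisive computation is that each pair $\{b,-b\}$ with $b > 0$ contributes
\[
\frac{(1 - q^{-1}L_b)(1 - q^{-1}L_b^{-1})}{(1 - qL_b)(1 - qL_b^{-1})} = q^{-2},
\]
so the full product equals $q^{-2n}$ and cancels the prefactor $q^{2n}$, leaving exactly $\left(\frac{q^{-2n} - 1}{q - q^{-1}}\right)^{\ell}$ as the trailing summand. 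The main obstacle throughout is purely algebraic bookkeeping: the cancellations are uniform in spirit but take slightly different shapes in each of the three types, and in type $\mathsf{B}$ one must additionally verify that the $a = 0$ product telescopes as above.
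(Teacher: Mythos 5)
Your proposal is correct and follows essentially the same route as the paper: the paper likewise obtains $C^0_{n,\ell}$ from the eigenvalue formula of \propref{prop: eigval} via the substitution $q^{(\varepsilon_a,2\rho+2\lambda+\varepsilon_a)}\mapsto q^{(\varepsilon_a,\varepsilon_a)}L_a$ and then performs the same simplifications (splitting off the $b=-a$ factor, absorbing $f(a)$ and the scalar $q^{c_n-(\varepsilon_a,\varepsilon_a)}$ into the prefactor of $P_{n,a}$, and using the identity $q^{2n}\prod_{b\in\mathbf I_n}\frac{1-q^{-1}L_b}{1-qL_b}=1$, which you actually verify by the pairwise telescoping the paper leaves unproved). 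The one small bookkeeping slip is that with $\mathrm{ev}_\lambda(K_\mu)=q^{(\mu,\lambda+\rho)}$ one gets $\mathrm{ev}_\lambda(L_a)=q^{(2\varepsilon_a,\lambda+2\rho)}$ rather than $q^{(2\varepsilon_a,\lambda+\rho)}$, since $L_a=q^{(\varepsilon_a,2\rho)}K_{2\varepsilon_a}$; the consistent choice is to evaluate the $\U^0$-component $\pi(C_{n,\ell})$ at $K_\mu\mapsto q^{(\mu,\lambda)}$ (the action on the highest weight vector), under which indeed $L_a\mapsto q^{(2\varepsilon_a,\lambda+\rho)}$ --- this does not affect any of the subsequent algebra.
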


\begin{proof}
Recall that $\mathcal{Z}(\mathrm{U})\subseteq \mathrm{U}_0=\U^0 \oplus \bigoplus_{\nu>0} \U_{-\nu}^- \U^0 \U_{\nu}^+$. When acting on a highest weight vector in a simple $\mathrm{U}$-module, only the $\U^0$-component of $C_{n,\ell}$ contributes to the eigenvalue of $C_{n,\ell}$. Therefore, by \propref{prop: eigval}, we have 
\[
    C^0_{n,\ell}= \sum_{a\in \mathbf{I}'_{n}}q^{c_n-(\varepsilon_a, \varepsilon_a)}F(L_a) \left(\frac{q^{(\varepsilon_a,\varepsilon_a)-c_n}L_a-1}{q-q^{-1}}\right)^{\ell}\prod_{b\in \mathbf{I}'_{n}\setminus\{a\}} \frac{q^{(\varepsilon_a,\varepsilon_a)}L_a-q^{-(\varepsilon_b,\varepsilon_b)}L_b}{q^{(\varepsilon_a,\varepsilon_a)}L_a-q^{(\varepsilon_b,\varepsilon_b)}L_b},
\]
where $c_n$ and $\mathbf{I}'_n$ are defined as in \propref{prop: eigval}, and 
\[
F(L_a)=
\left\{
\begin{array}{l@{\qquad}l}
\displaystyle 1+(1-\delta_{a,0})(q-q^{-1})
 \frac{qL_a}{q^2L_a^2-1},
 & \text{type } \mathsf{B}_n,\\[1ex]
\displaystyle 1+\frac{1-q^{-2}}
      {q^2L_a^2-1},
 & \text{type } \mathsf{C}_n,\\[1ex]
\displaystyle 1-\frac{q^{2}-1}
      {q^2L_a^2-1},
 & \text{type } \mathsf{D}_n.
\end{array}
\right.
\]
In the case of type $\mathsf{B}_n$, splitting the sum into  the cases $a\in \mathbf{I}_n$ and $a=0$ yields
\begin{align*}
 C^0_{n,\ell}&= \sum_{a\in \mathbf{I}_n} \left( 1+ \frac{q-q^{-1}}{qL_a-q^{-1}L_{a}^{-1}} \right) \left(\frac{q^{1-2n}L_a-1}{q-q^{-1}}\right)^{\ell} \prod_{b\in \mathbf{I}_n\setminus\{a\}}  \frac{qL_a-q^{-1}L_b}{L_a-L_b}\\ 
 & \quad \ + q^{2n} \left(\frac{q^{-2n}-1}{q-q^{-1}}\right)^{\ell} \prod_{b\in \mathbf{I}_n}\frac{1-q^{-1}L_{b}}{1-qL_b} \\
 &=\! \sum_{a\in \mathbf{I}_n}\! \left( \frac{qL_a-q^{-1}L_a^{-1}+ q-q^{-1}}{L_a-L_a^{-1}} \right) \!\left(\frac{q^{1-2n}L_a-1}{q-q^{-1}}\right)^{\ell} P_{n,a} + \left(\frac{q^{-2n}-1}{q-q^{-1}}\right)^{\ell}, 
\end{align*}
where we have used the following identities 
\begin{align*}
\prod_{b\in \mathbf{I}'_n\setminus \{a\}}\frac{q^{(\varepsilon_a,\varepsilon_a)}L_a-q^{-(\varepsilon_b,\varepsilon_b)}L_b}{q^{(\varepsilon_a,\varepsilon_a)}L_a-q^{(\varepsilon_b,\varepsilon_b)}L_b} &=q^{1-2n} \prod_{b\in \mathbf{I}_n\setminus\{a\}}  \frac{qL_a-q^{-1}L_b}{L_a-L_b}, \quad a\in \mathbf{I}_n, \\ 
q^{2n}\prod_{b\in \mathbf{I}_n}\frac{1-q^{-1}L_b}{1-qL_b}&=1. 
\end{align*}
The formulas for the other two cases can be obtained similarly. 
\end{proof}

Applying the binomial expansion to \lemref{lem: C^0_nl}, we  obtain the following.  

\begin{corollary}\label{coro: C^0_nl}
For any integer $\ell \geq 1$, the element $C^0_{n,\ell}$ can be written as
\[
C^0_{n,\ell}= \frac{1}{(q^{-1}-q)^{\ell}} \sum_{k=0}^{\ell} \binom{\ell}{k} (-q^{1-c_n})^{k}G_{n,k},
 \]
where $c_n$ is as defined in \propref{prop: eigval}, and for $0\leq k\leq \ell$, the terms $G_{n,k}$ are given by
\begin{align*}
\text{Type }\mathsf{B}_n\, (n\geq 2):\quad
G_{n,k}
&=  q^{-k}+ 
\sum_{a\in \mathbf{I}_n} \frac{qL_a-q^{-1}L_a^{-1}+q-q^{-1}}{L_a-L_a^{-1}} L_a^k P_{n,a}, \\
\text{Type }\mathsf{C}_n\, (n\geq 3):\quad
G_{n,k}
&=
\sum_{a\in\mathbf I_n}
\frac{q^2L_a - q^{-2}L_a^{-1}}{L_a - L_a^{-1}}L_a^k  P_{n,a},
\\
\text{Type }\mathsf{D}_n\,(n\geq 4):\quad
G_{n,k}
&=
\sum_{a\in\mathbf I_n}L_a^k P_{n,a}.
\end{align*}
\end{corollary}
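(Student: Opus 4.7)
The plan is to deduce Corollary~3.3 directly from Lemma~3.2 by applying the binomial theorem to the $\ell$-th power appearing in each of the three formulas and then interchanging the two summations.

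First I would observe that the three types fit into a common template. A quick case-by-case check shows that the exponent of $q$ appearing inside the parenthetical $\ell$-th power of Lemma~3.2 is uniformly $1 - c_n$: for type $\mathsf{B}_n$, $1-2n = 1-c_n$ with $c_n = 2n$; for type $\mathsf{C}_n$, $-2n = 1-c_n$ with $c_n = 2n+1$; and for type $\mathsf{D}_n$, $2-2n = 1-c_n$ with $c_n = 2n-1$. Hence in all three types the $\ell$-th power has the uniform form $\bigl(\tfrac{q^{1-c_n}L_a - 1}{q-q^{-1}}\bigr)^{\ell}$.

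Next I would apply the binomial theorem to this common template. Expanding $(q^{1-c_n}L_a - 1)^\ell$ and using the sign identity $(-1)^\ell/(q-q^{-1})^\ell = 1/(q^{-1}-q)^\ell$ yields
\[
\left(\frac{q^{1-c_n}L_a - 1}{q-q^{-1}}\right)^{\ell}
= \frac{1}{(q^{-1}-q)^{\ell}}\sum_{k=0}^{\ell}\binom{\ell}{k}(-q^{1-c_n})^{k}L_a^{k}.
\]
Substituting this expression into the formulas of Lemma~3.2 and swapping the order of summation, the sum over $a \in \mathbf{I}_n$ collapses in types $\mathsf{C}_n$ and $\mathsf{D}_n$ to precisely the defined $G_{n,k}$, giving the claimed identity. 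For type $\mathsf{B}_n$, the additional scalar term $\bigl((q^{-2n}-1)/(q-q^{-1})\bigr)^{\ell}$ must also be expanded; writing $q^{-2n} = q^{1-c_n}\cdot q^{-1}$ and using the same binomial expansion contributes an extra $q^{-k}$ inside each summand of $G_{n,k}$, which is exactly the isolated constant appearing in the definition of $G_{n,k}$ in type $\mathsf{B}_n$.

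The argument is a purely symbolic manipulation and presents no substantive obstacle. The only points requiring a little care are the sign bookkeeping needed to convert $(q-q^{-1})^\ell$ into $(q^{-1}-q)^\ell$, and the verification that the isolated constant term in type $\mathsf{B}_n$ corresponds to the "missing" contribution from $\varepsilon_0 = 0$, effectively realised by setting $L_0 = q^{-1}$, so that $(-q^{1-c_n})^k\cdot q^{-k} = (-q^{-2n})^k$ reproduces the expected expansion coefficient.
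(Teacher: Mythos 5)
Your proposal is correct and is exactly the paper's argument: the paper derives the corollary from Lemma~\ref{lem: C^0_nl} by the same binomial expansion of $\bigl((q^{1-c_n}L_a-1)/(q-q^{-1})\bigr)^{\ell}$, with the extra scalar term in type $\mathsf{B}_n$ handled just as you describe. Your write-up merely makes explicit the sign bookkeeping and the uniform exponent $1-c_n$ that the paper leaves to the reader.
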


It is not obvious that $G_{n,k}$ is a polynomial in $L_{a}, a\in \mathbf{I}_n$ with coefficients in $\mathbb{Z}[q,q^{-1}]$. The equality from \corref{coro: C^0_nl} holds for $\ell\geq 1$, and one would naturally expect it to hold also for $\ell=0$, so that $C^0_{n,0}= G_{n,0}$. On the other hand, by \rmkref{rmk: qdim},  $C_{L(\varpi_1),0}= \mathrm{qdim}(L(\varpi_1))$ is the quantum dimension of the natural module $L(\varpi_1)$ of $\mathrm{U}_q(\mathfrak{g})$. Hence $C^0_{n, 0}= \varphi(C_{L(\varpi_1),0})= \mathrm{qdim}(L(\varpi_1))$. Therefore,  one expects that $G_{n,0}= \mathrm{qdim}(L(\varpi_1))$, which is given  as follows.

\begin{proposition}\label{prop: Gin}
    For types $\mathsf{B}_n\, (n\geq 2)$, $\mathsf{C}_n\, (n\geq 3)$, and $\mathsf{D}_n\, (n\geq 4)$,  we have
\begin{align*}
G_{n,0}=\sum_{a\in \mathbf{I}'_n} q^{(2\rho, \varepsilon_a)}, \quad G_{n,1}= q^{c_n-1}\sum_{a\in \mathbf{I}'_n} L_{a},
\end{align*}
where $c_n$ and $\mathbf{I}'_n$ are defined as in \propref{prop: eigval}.
\end{proposition}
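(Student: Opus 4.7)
The plan is to establish both identities uniformly by a single-variable residue calculus on
\[F(z) := \prod_{b \in \mathbf{I}_n} \frac{qz - q^{-1}L_b}{z - L_b},\]
regarded as a rational function of $z$ over the field of rational functions in $L_1, \ldots, L_n$ with the convention $L_{-a}:=L_a^{-1}$. Isolating the factors indexed by $\pm a$ gives
\[\operatorname{Res}_{z=L_a} F(z) = (q-q^{-1})L_a\cdot \frac{qL_a^2 - q^{-1}}{L_a^2 - 1}\cdot P_{n,a},\]
and three special values of $F$ are read off directly from the product: $F(\infty)=q^{2n}$, $F(0)=q^{-2n}$ (using $\prod_{b\in \mathbf{I}_n} L_b=1$), and the key identity $F(\pm q^{-1})=1$. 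The last follows by pairing the factors for $b$ and $-b$: at $z=\pm q^{-1}$, each such pair collapses to $1$ thanks to $L_{-b}=L_b^{-1}$.

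For each classical type I would multiply $F(z)$ by an auxiliary rational function $h(z)$ chosen so that $\operatorname{Res}_{z=L_a}[F(z)h(z)]$ equals $(q-q^{-1})$ times the $a$-th summand of $G_{n,k}$. Comparing the above residue with the summands in \corref{coro: C^0_nl} and using the factorizations $qL_a^2 - q^{-1}=q(L_a - q^{-1})(L_a + q^{-1})$ and $L_a^2 - 1=(L_a - 1)(L_a + 1)$, the natural choices are $h_{\mathsf D}(z)=z^{k-1}(z^2-1)/(qz^2-q^{-1})$, $h_{\mathsf C}(z)=z^{k-1}(q^2z^2-q^{-2})/(qz^2-q^{-1})$, and $h_{\mathsf B}(z)=z^{k-1}(z+1)(z-q^{-2})/[(z-q^{-1})(z+q^{-1})]$. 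Each $h(z)$ has its additional poles only at $z=0$ (only when $k=0$), $z=\pm q^{-1}$, and $z=\infty$, and the residues of $F(z)h(z)$ at these points are immediate from the three special values of $F$ above.

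To prove the $G_{n,0}$ identity, I would apply the residue theorem on $\mathbb{P}^1$ to $F(z)h(z)$ with $k=0$: the sum of residues at $z=L_a$, which equals $(q-q^{-1})$ times the $\sum_a$ portion of $G_{n,0}$, must cancel the explicitly computed residues at $z=0,\pm q^{-1},\infty$. The resulting expression simplifies via the quantum integers $[m]_q=(q^m-q^{-m})/(q-q^{-1})$ and, after unpacking $(2\rho, \varepsilon_a)$ type by type, matches the claimed sum. To prove the $G_{n,1}$ identity, I would instead match the coefficient of $1/z$ in the Laurent expansion of $F(z)h(z)$ with $k=1$ around $z=\infty$: directly from the product formula, using $F(z)=q^{2n}+q^{2n-1}(q-q^{-1})\sum_{b\in \mathbf{I}_n} L_b/z+O(1/z^{2})$, this coefficient works out to $(q-q^{-1})q^{c_n-1}\sum_{a\in\mathbf{I}'_n}L_a$ up to auxiliary contributions at $\pm q^{-1}$; from the partial fraction expansion it is $(q-q^{-1})G_{n,1}$ plus the same auxiliary terms, and these cancel because $F(\pm q^{-1})=1$. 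I expect the only genuinely nontrivial step to be verifying $F(\pm q^{-1})=1$ cleanly from the pairing $L_{-b}=L_b^{-1}$; the remainder is careful but routine bookkeeping, uniform across the three classical types once the correct $h(z)$ is identified.
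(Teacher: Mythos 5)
Your proposal is correct, and it takes a genuinely different route from the paper. The paper explicitly states that it has no direct proof of this proposition and instead deduces it from the character-theoretic Theorems \ref{thm: ChGtypeB}, \ref{thm: ChGtypeC} and \ref{thm: ChGtypeD}: the $k=0$ and $k=1$ cases of those theorems identify $\mathbf{Ch}\,G_{n,0}$ with the quantum dimension of the natural module and $\mathbf{Ch}\,G_{n,1}$ with $q^{c_n-1}$ times its character, which is exactly the claimed formula after translating $e^{\varepsilon_a}$ back to $L_a$. Your residue-calculus argument is self-contained and bypasses Section 4 entirely: the three ingredients you isolate --- the residue of $F$ at $z=L_a$ reproducing $(q-q^{-1})L_a\frac{qL_a^2-q^{-1}}{L_a^2-1}P_{n,a}$, the special values $F(\infty)=q^{2n}$, $F(0)=q^{-2n}$, $F(\pm q^{-1})=1$ (the last by the pairing $L_{-b}=L_b^{-1}$), and the type-dependent multipliers $h(z)$ --- are all correct, and I have checked that the residue-theorem bookkeeping closes in each type, recovering $1+[2n]_q$, $[2n+1]_q-1$ and $1+[2n-1]_q$ for $G_{n,0}$ and $q^{c_n-1}\sum_{a\in\mathbf{I}'_n}L_a$ for $G_{n,1}$. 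One small imprecision: for $k=1$ the residues at $z=\pm q^{-1}$ cancel each other only in types $\mathsf{C}$ and $\mathsf{D}$; in type $\mathsf{B}$ they sum to $1-q^{-2}=q^{-1}(q-q^{-1})$, which instead cancels against the extra constant $q^{-k}$ appearing in $G_{n,k}$ for that type (and likewise the index $0\in\mathbf{I}'_n$ must absorb the constant terms), so the phrase ``these cancel because $F(\pm q^{-1})=1$'' needs this slight adjustment --- but it is exactly the routine bookkeeping you anticipate. What your approach buys is precisely the ``direct proof'' the authors say they lack, at the cost of being purely computational; what the paper's approach buys is that the proposition comes for free once the (much heavier) representation-theoretic machinery is in place, together with a conceptual explanation of why $G_{n,0}$ and $G_{n,1}$ are a quantum dimension and a character.
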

\begin{proof}
The proof is not straightforward, and we do not have a direct proof. Actually, it follows from Theorems \ref{thm: ChGtypeB}, \ref{thm: ChGtypeC} and \ref{thm: ChGtypeD} in the next section. 
\end{proof}

We also note that  $G_{n,1}$ is, up to a  multiple  $q^{c_n-1}$ and a change of variables,  essentially the character of the natural  module. 
For any integer $m>0,$ the transition matrix between the families $\{C^0_{n,\ell}\mid 0\leq \ell\leq m\}$ and $\{G_{n,k}\mid 0\leq k\leq m\}$ turns out to be triangular and invertible over $\mathbb{K}$. It follows that  the images $C_{n,\ell}^0$ of the higher-order quantum Casimir elements $C_{n,\ell}$ are $\mathbb{K}$-linearly spanned by  the $W$-invariant polynomials $G_{n,k}\in (\mathrm{U}^0_{ev})^W$.  In what follows, we will focus primarily  on the elements $G_{n,k}$, and give a representation-theoretic interpretation of these polynomials.

\section{Representation-theoretic reformulation} \label{sec: repform}
In this section, we will reformulate $G_{n,k}$ in terms of characters of finite-dimensional representations of $\fg$. To this end, we need the following expression. 

\begin{notation}
We denote by ${\bf Ch}\, G_{n,k}$ the expression obtained from $G_{n,k}$ by replacing $L_a$ with the formal exponential $e^{\varepsilon_a}$ for all $a \in \mathbf{I}_n=\{-n, \dots, -1,1, \dots, n\}$.
\end{notation}

\subsection{Weyl groups and characters} Recall that the Weyl groups $W_{\mathsf{B}_n}$ and $W_{\mathsf{C}_n}$ of types $\mathsf{B}_n$ and $\mathsf{C}_n$ coincide, and can be identified with the group of all signed permutations on $\mathbf{I}_n=\{-n, \dots, -1,1, \dots, n\}$, that is, bijections $w: \mathbf{I}_n \to  \mathbf{I}_n$ such that $w(-a) = -w(a)$ for $a\in \mathbf{I}_n$. The Weyl group $W_{\mathsf{D}_n}$ of type $\mathsf{D}_n$ is the index-two subgroup consisting of signed permutations with an even number of sign changes. We identify $W_{D_{n-1}}$ (resp. $W_{B_{n-1}}$) as a subgroup of $W_{D_{n}}$ (resp. $W_{\mathsf{B}_n}$) that fixes $\pm 1$. For each $n\geq 2$, we choose the left coset representatives for  $W_{D_{n}}/W_{D_{n-1}}$ as follows: 
\[
R_n= \{ (1,i)(-1,-i), (1, -i)(-1, i) \mid 2\leq i \leq n\}\cup \{(1), (1,-1)(n,-n)\},
\]
where $(1)$ denotes the identity. Indeed, $R_n$ is also a set of left coset representatives for  $W_{B_{n}}/W_{B_{n-1}}$.

Let $\bf{sgn}$ denote the sign character of the Weyl group $W$, and set the antisymmetriser 
\[{\bf A}=\sum_{w\in W}{\bf sgn}(w)w.\] 

\begin{lemma}\label{lem: altvanish}
If   $\lambda\in P$ and $\alpha\in \Phi$ satisfy  $(\lambda, \alpha)=0$, then ${\bf A}(e^{\lambda})=0$.  
\end{lemma}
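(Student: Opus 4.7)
The plan is to exploit the reflection $s_\alpha$ attached to the root $\alpha$, together with the defining sign-change property of the antisymmetriser. The hypothesis $(\lambda,\alpha)=0$ is precisely designed to force $s_\alpha$ to fix $\lambda$: indeed,
\[
s_\alpha(\lambda) \;=\; \lambda - \frac{2(\lambda,\alpha)}{(\alpha,\alpha)}\,\alpha \;=\; \lambda,
\]
so that $e^{s_\alpha(\lambda)} = e^\lambda$. This is the only substantive computation.

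Next I would group the terms in ${\bf A}(e^\lambda) = \sum_{w\in W}{\bf sgn}(w)\,e^{w(\lambda)}$ by left cosets of the order-two subgroup $\langle s_\alpha\rangle\leq W$. Writing $W = \bigsqcup_{i} w_i\langle s_\alpha\rangle$ and using ${\bf sgn}(w_i s_\alpha) = -{\bf sgn}(w_i)$ together with $w_i s_\alpha(\lambda) = w_i(\lambda)$, each coset contributes
\[
{\bf sgn}(w_i)\,e^{w_i(\lambda)} + {\bf sgn}(w_i s_\alpha)\,e^{w_i s_\alpha(\lambda)} \;=\; {\bf sgn}(w_i)\bigl(e^{w_i(\lambda)} - e^{w_i(\lambda)}\bigr) \;=\; 0.
\]
Summing over $i$ gives ${\bf A}(e^\lambda) = 0$.

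There is no real obstacle here; the statement is a one-line consequence of the standard alternating-sum trick for the Weyl antisymmetriser. The only thing worth being careful about is that the hypothesis is stated for an arbitrary root $\alpha\in\Phi$ rather than only a simple root, but this is harmless since $s_\alpha \in W$ for every $\alpha\in\Phi$ and ${\bf sgn}(s_\alpha) = -1$ for any reflection.
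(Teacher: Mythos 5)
Your proof is correct and rests on the same key observation as the paper's, namely that $(\lambda,\alpha)=0$ forces $s_\alpha(\lambda)=\lambda$ while $\mathbf{sgn}(ws_\alpha)=-\mathbf{sgn}(w)$. The paper phrases the conclusion as $\mathbf{A}(e^\lambda)=-\mathbf{A}(e^\lambda)$ whereas you pair terms over cosets of $\langle s_\alpha\rangle$, but these are the same alternating-sum argument.
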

\begin{proof}
  Let $s_{\alpha}\in W$ be the reflection in the root $\alpha$. Then $s_{\alpha}(\lambda)=\lambda- 2\frac{(\lambda, \alpha)}{(\alpha, \alpha)}= \lambda$. Noting that $\mathbf{sgn}(ws_{\alpha})=-\mathbf{sgn}(w)$, we have  
  \[ \mathbf{A}(e^{\lambda})=  \mathbf{A}(e^{s_{\alpha}(\lambda)})= {\bf A}(s_{\alpha}(e^{\lambda})) = -{\bf A}(e^{\lambda}),   \]
  whence ${\bf A}(e^{\lambda})=0$.
\end{proof}

Let $L(\lambda)$ be a simple $\mathfrak{g}$-module with highest weight $\lambda\in P^+$. By the Weyl character formula, its character is given by 
\[
    \chi(\lambda): = \mathbf{Ch}(L(\lambda))= \frac{\mathbf{A}(e^{\lambda+\rho})}{\mathbf{A}(e^{\rho})}, 
\]
which is $W$-invariant. The  denominator in this expression, denoted by $\Delta$,  is also given by the Weyl denominator  formula
\[
    \Delta=\mathbf{A}(e^{\rho}) = \prod_{\alpha\in \Phi^+} (e^{\frac{\alpha}{2}}-e^{-\frac{\alpha}{2}}).
\]
The following simple observation will be useful later.

\begin{lemma}\label{lem: altiden}
 For any $\lambda\in P^+$, we have $\mathbf{A}(e^{\rho}\chi(\lambda))= \mathbf{A}(e^{\lambda+\rho})$. 
\end{lemma}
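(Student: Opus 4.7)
The plan is to observe that $\chi(\lambda)$ is $W$-invariant by construction (it is the character of a $W$-stable weight-space decomposition), and to exploit this invariance to pull $\chi(\lambda)$ outside the antisymmetriser $\mathbf{A}$. Concretely, since $w(\chi(\lambda))=\chi(\lambda)$ for every $w\in W$, the defining sum $\mathbf{A}=\sum_{w\in W}\mathbf{sgn}(w)w$ applied to the product $e^{\rho}\chi(\lambda)$ factors as
\[
\mathbf{A}(e^{\rho}\chi(\lambda))=\sum_{w\in W}\mathbf{sgn}(w)\,w(e^{\rho})\cdot w(\chi(\lambda))=\Big(\sum_{w\in W}\mathbf{sgn}(w)\,w(e^{\rho})\Big)\chi(\lambda)=\mathbf{A}(e^{\rho})\,\chi(\lambda)=\Delta\cdot\chi(\lambda).
\]

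Next, I would invoke the Weyl character formula recorded just above the lemma, namely $\chi(\lambda)=\mathbf{A}(e^{\lambda+\rho})/\mathbf{A}(e^{\rho})=\mathbf{A}(e^{\lambda+\rho})/\Delta$. Substituting this into the previous display immediately gives
\[
\mathbf{A}(e^{\rho}\chi(\lambda))=\Delta\cdot\frac{\mathbf{A}(e^{\lambda+\rho})}{\Delta}=\mathbf{A}(e^{\lambda+\rho}),
\]
which is the claimed identity. There is essentially no obstacle: the only subtle point is the legitimacy of writing $\Delta\cdot\chi(\lambda)/\Delta=\chi(\lambda)\cdot\Delta$ inside the group ring of formal exponentials, but since $\Delta$ is a nonzero element of the relevant fraction field (equivalently, since $\chi(\lambda)$ is a genuine polynomial in the $e^{\mu}$ by Weyl's theorem), this cancellation is harmless. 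Hence the proof is a one-line consequence of $W$-invariance of characters together with the Weyl character formula.
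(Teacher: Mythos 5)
Your proof is correct and is essentially identical to the paper's: both pull the $W$-invariant character $\chi(\lambda)$ out of the antisymmetriser to get $\mathbf{A}(e^{\rho})\chi(\lambda)$ and then apply the Weyl character formula. The extra remark about cancellation of $\Delta$ is fine but not needed.
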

\begin{proof}
Since $\chi(\lambda)$ is $W$-invariant, we have $\mathbf{A}(e^{\rho}\chi(\lambda))= \mathbf{A}(e^{\rho})\chi(\lambda) = \mathbf{A}(e^{\lambda+\rho}).$
\end{proof}

\subsection{Reformulation of \texorpdfstring{$G_{n,k}$}{Gnk}}

For integers $n\geq 2$ and $k\geq 0$, we define an auxiliary expression 
\[H_{n,k}:= e^{\rho+k\varepsilon_1} \prod_{\alpha\in \Phi_{\varepsilon_1}^+}(1-q^{-2(\alpha, \varepsilon_1)}e^{-\alpha}), 
\]
where $\Phi_{\varepsilon_1}^+:= \{ \alpha\in \Phi^+\mid (\alpha, \varepsilon_1)>0 \}$. This is defined uniformly for all classical types. In particular,  for  types $\mathsf{B}_n\,(n\geq 2)$, $\mathsf{C}_n\,(n\geq 3)$ and $\mathsf{D}_n\,(n\geq 4)$,  we have
\begin{align*}
H_{\mathsf{B}_n,k} &= e^{\rho_{\mathsf{B}_n}+ k\varepsilon_1}(1-q^{-2}e^{-\varepsilon_1}) \prod_{i=2}^{n}(1-q^{-2}e^{-(\varepsilon_1-\varepsilon_i)})(1-q^{-2}e^{-(\varepsilon_1+\varepsilon_i)}), \\
H_{\mathsf{C}_n,k} &= e^{\rho_{\mathsf{C}_n}+k\varepsilon_1}(1-q^{-4}e^{-2\varepsilon_1}) \prod_{i=2}^{n}(1-q^{-2}e^{-(\varepsilon_1-\varepsilon_i)})(1-q^{-2}e^{-(\varepsilon_1+\varepsilon_i)}), \\
H_{\mathsf{D}_n,k} &= e^{\rho_{\mathsf{D}_n} + k\varepsilon_1} \prod_{i=2}^{n}(1-q^{-2}e^{-(\varepsilon_1-\varepsilon_i)})(1-q^{-2}e^{-(\varepsilon_1+\varepsilon_i)}).
\end{align*}

For each type above, we realise the rank $n-1$ root system as the subsystem obtained by removing the simple root $\alpha_1 = \varepsilon_1 - \varepsilon_2$ from the Dynkin diagram of the rank $n$ root system.   Under this realisation, the corresponding Weyl group $W_{n-1}$ is the subgroup of $W_n$ that fixes $\varepsilon_{\pm 1}$. Let $\rho_n$ denote the half-sum of positive roots of the rank $n$ system, and $\rho_{n-1}$ that of the embedded rank $n-1$ subsystem. Then $\rho_n = \kappa_n\,\varepsilon_1 + \rho_{n-1},$ where $\kappa_n= n-\frac{1}{2}$ for type $\mathsf{B}_n$, $\kappa_n=n$ for type $\mathsf{C}_n$, and $\kappa_n= n-1$ for type $\mathsf{D}_n$. 

\begin{proposition}\label{prop: Combid}
For integers $k\geq 0$, we have the following identities:
\begin{alignat*}{2}
   \qquad \qquad  \qquad  \Delta_n \mathbf{Ch}\, G_{n,k} &= q^{-k}\Delta_n + q^{c_n-1}\mathbf{A}_n(H_{n,k}), 
   &\qquad &\text{type $\mathsf{B}_n$ ($n\geq 2$)},\\
   \qquad   \qquad  \qquad  \Delta_n \mathbf{Ch}\, G_{n,k} &= q^{c_n-1}\mathbf{A}_n(H_{n,k}),
   &\qquad &\text{types $\mathsf{C}_n$ ($n\geq 3$) and $\mathsf{D}_n$ ($n\geq 4$)}.
\end{alignat*}  
where $\mathbf{A}_n$ is the antisymmetriser for the Weyl group $W_n$ of type $\mathsf{B}_n$, $\mathsf{C}_n$, or $\mathsf{D}_n$,  $\Delta_n$ is the corresponding  Weyl denominator for that type, and $c_n$ is defined as in \propref{prop: eigval}.  
\end{proposition}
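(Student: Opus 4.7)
The plan is to establish the identity via a coset decomposition of $W_n$ over the stabiliser $W_{n-1}$ of $\varepsilon_1$. The key preliminary observation is that, since $W_{n-1}$ fixes $\varepsilon_1$ and hence permutes $\Phi_{\varepsilon_1}^+$ as a set while preserving the pairing with $\varepsilon_1$, both $e^{(\kappa_n+k)\varepsilon_1}$ and the product $\prod_{\alpha\in\Phi_{\varepsilon_1}^+}(1-q^{-2(\alpha,\varepsilon_1)}e^{-\alpha})$ are $W_{n-1}$-invariant. Consequently the partial antisymmetriser $\mathbf{A}_{n-1}$ acts only on the $e^{\rho_{n-1}}$ factor inside $H_{n,k}$, giving
\[\mathbf{A}_{n-1}(H_{n,k}) = e^{(\kappa_n+k)\varepsilon_1}\,\Delta_{n-1}\prod_{\alpha\in\Phi_{\varepsilon_1}^+}(1-q^{-2(\alpha,\varepsilon_1)}e^{-\alpha}).\]
Taking $R_n$ to be coset representatives for $W_n/W_{n-1}$ indexed by $r_a(\varepsilon_1)=\varepsilon_a$ for $a\in\mathbf{I}_n$, and using $(r_a(\alpha),\varepsilon_a)=(\alpha,\varepsilon_1)$, one obtains
\[\mathbf{A}_n(H_{n,k}) = \sum_{a\in\mathbf{I}_n}\mathrm{sgn}(r_a)\,e^{(\kappa_n+k)\varepsilon_a}\,r_a(\Delta_{n-1})\prod_{\beta\in r_a(\Phi_{\varepsilon_1}^+)}(1-q^{-2(\beta,\varepsilon_a)}e^{-\beta}).\]

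Next I would match this against $\Delta_n\,\mathbf{Ch}\,G_{n,k}$ term by term. The crucial factorisation is
\[\prod_{b\in\mathbf{I}_n\setminus\{\pm a\}}(e^{\varepsilon_a}-e^{\varepsilon_b}) = (-1)^{|a|-1}\,e^{(n-1)\varepsilon_a}\prod_{\substack{\alpha\in\Phi^+\\(\alpha,\varepsilon_a)\ne 0}}(e^{\alpha/2}-e^{-\alpha/2}),\]
which, combined with the complementary factorisation $\Delta_n = \Delta^{(|a|)}_n\cdot\prod_{\alpha\in\Phi^+,\,(\alpha,\varepsilon_a)\ne 0}(e^{\alpha/2}-e^{-\alpha/2})$ where $\Delta^{(|a|)}_n$ denotes the Weyl denominator of the rank-$(n-1)$ subsystem orthogonal to $\varepsilon_a$, reduces the denominator of the $a$-th term of $\mathbf{Ch}\,G_{n,k}$. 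Writing $r_a(\Delta_{n-1})=\epsilon_a\,\Delta^{(|a|)}_n$ for an appropriate sign $\epsilon_a\in\{\pm 1\}$, the identity boils down to the sign congruence $\mathrm{sgn}(r_a)\,\epsilon_a = (-1)^{|a|-1}$. Using $\mathrm{sgn}(r_a)=(-1)^{N(r_a)}$ and decomposing $N(r_a)=m_a+m'_a$ over $\Phi^+ = \Phi_{n-1}^+\sqcup\Phi_{\varepsilon_1}^+$ with $\epsilon_a=(-1)^{m_a}$, this reduces further to the congruence
\[m'_a\equiv|a|-1\pmod 2,\qquad m'_a := |\{\alpha\in\Phi_{\varepsilon_1}^+:\,r_a(\alpha)\in-\Phi^+\}|,\]
which I would verify by direct enumeration for each representative in $R_n$.

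For type $\mathsf{B}_n$ the additional short-root factor $(1-q^{-2}e^{-\varepsilon_1})$ in $H_{n,k}$ produces exactly the pre-factor $\frac{qL_a-q^{-1}L_a^{-1}+q-q^{-1}}{L_a-L_a^{-1}}$ in $G_{n,k}^{\mathsf{B}}$ from \corref{coro: C^0_nl}, while the constant $q^{-k}$ summand there accounts directly for the correction $q^{-k}\Delta_n$ on the right-hand side. Type $\mathsf{C}_n$ is entirely analogous, with the long-root factor $(1-q^{-4}e^{-2\varepsilon_1})$ in $H_{n,k}$ producing the pre-factor $\frac{q^2L_a-q^{-2}L_a^{-1}}{L_a-L_a^{-1}}$.

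The main obstacle will be the sign verification $m'_a\equiv|a|-1\pmod 2$: although elementary, it requires case-by-case analysis on the coset representatives, and the type-$\mathsf{D}_n$ representative $(1,-1)(n,-n)$ is especially delicate, as it sends every root of $\Phi_{\varepsilon_1}^+$ to its negative, giving $m'_{-1}=2(n-1)$ and the correct sign $(-1)^{2(n-1)}=1=(-1)^{|-1|-1}$.
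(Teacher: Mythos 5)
Your proposal is correct, and its skeleton coincides with the paper's: both decompose $\mathbf{A}_n$ over the cosets $W_n/W_{n-1}$ with $W_{n-1}$ the stabiliser of $\varepsilon_1$, and both exploit the $W_{n-1}$-invariance of $e^{(\kappa_n+k)\varepsilon_1}\prod_{\alpha\in\Phi_{\varepsilon_1}^+}(1-q^{-2(\alpha,\varepsilon_1)}e^{-\alpha})$ to collapse the inner signed sum to $\Delta_{n-1}$. Where you diverge is the endgame. You keep the coset representatives' signs explicit and reduce the identity to the congruence $m'_a\equiv|a|-1\pmod 2$, to be checked case by case on $R_n$; your spot checks are right (I verified $m'_i=i-1$, $m'_{-i}=2n-1-i$ and $m'_{-1}=2(n-1)$ in type $\mathsf{D}_n$, all consistent), so this route does close, but it costs you a root-by-root enumeration for every representative and every type. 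The paper sidesteps all of this with one move: before applying $\sigma\in R_n$, it multiplies and divides by $\prod_{\alpha\in\Phi_{\varepsilon_1}^+}(e^{\alpha/2}-e^{-\alpha/2})$, recognises $\Delta_{n-1}\cdot\prod_{\alpha\in\Phi_{\varepsilon_1}^+}(e^{\alpha/2}-e^{-\alpha/2})=\Delta_n$, and then uses $\mathbf{sgn}(\sigma)\,\sigma(\Delta_n)=\Delta_n$ to cancel every sign at once, leaving an unsigned sum $\Delta_n\sum_{\sigma\in R_n}\sigma(\cdots)$ whose generic term is manifestly the $a$-th summand of $\mathbf{Ch}\,G_{n,k}$. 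Your sign congruence is in effect a hand re-derivation of that cancellation; if you fold the $\Phi_{\varepsilon_1}^+$-part of the Weyl denominator into the orbit of $\sigma$ from the start, the entire third paragraph of your argument (and its most delicate cases) becomes unnecessary. Your treatment of the type-$\mathsf{B}_n$ short root and the resulting prefactor, and of the $q^{-k}\Delta_n$ correction coming from the constant summand of $G_{n,k}^{\mathsf{B}}$, matches the paper's.
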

\begin{proof}
 We  note that 
\[
    \Delta_n= \prod_{\alpha\in \Phi^+\setminus \Phi_{\varepsilon_1}^+}(e^{\frac{\alpha}{2}}- e^{-\frac{\alpha}{2}}) \prod_{\alpha\in \Phi_{\varepsilon_1}^+}(e^{\frac{\alpha}{2}}- e^{-\frac{\alpha}{2}})=\Delta_{n-1} \prod_{\alpha\in \Phi_{\varepsilon_1}^+}(e^{\frac{\alpha}{2}}- e^{-\frac{\alpha}{2}}),
\]
and $w(\Delta_n)= \mathbf{sgn}(w)\Delta_n$ for any $w\in W_n$. Recalling that $R_n$ is the set of left coset representatives for $W_n/W_{n-1}$, we split the sum over the Weyl group $W_n$ accordingly and rewrite $\mathbf{A}_n(H_{n,k})$ as follows: 
\begin{align*}\allowdisplaybreaks
\mathbf{A}_n(H_{n,k})&= \sum_{w\in W_n}\mathbf{sgn}(w)w \Big(e^{(k+\kappa_n)\varepsilon_1+ \rho_{n-1}} \prod_{\alpha\in \Phi_{\varepsilon_1}^+}(1- q^{-2(\alpha, \varepsilon_1)}e^{-\alpha}) \Big)\\ 
&= \sum_{\sigma\in R_n} \mathbf{sgn}(\sigma)  \sigma \bigg( \sum_{u\in W_{n-1}} \mathbf{sgn}(u) u \Big(e^{(k+\kappa_n)\varepsilon_1+ \rho_{n-1}} \prod_{\alpha\in \Phi_{\varepsilon_1}^+}(1- q^{-2(\alpha, \varepsilon_1)}e^{-\alpha})\Big)\bigg)\\ 
&= \sum_{\sigma \in R_n} \mathbf{sgn}(\sigma) \sigma \Big( e^{(k+\kappa_n)\varepsilon_1} \prod_{\alpha\in \Phi_{\varepsilon_1}^+}(1- q^{-2(\alpha, \varepsilon_1)}e^{-\alpha}) \sum_{u\in W_{n-1}} \mathbf{sgn}(u) u (e^{\rho_{n-1}} ) \Big)\\
&= \sum_{\sigma \in R_n} \mathbf{sgn}(\sigma) \sigma \Big(  \Delta_{n-1}  e^{(k+\kappa_n)\varepsilon_1} \prod_{\alpha\in \Phi_{\varepsilon_1}^+}(1- q^{-2(\alpha, \varepsilon_1)}e^{-\alpha})\Big). \\ 
&=  \sum_{\sigma \in R_n} \mathbf{sgn}(\sigma) \sigma \Big(\Delta_n e^{(k+\kappa_n)\varepsilon_1} \prod_{\alpha\in \Phi_{\varepsilon_1}^+} \frac{1- q^{-2(\alpha, \varepsilon_1)}e^{-\alpha}}{e^{\frac{\alpha}{2}}-  e^{-\frac{\alpha}{2}}} \Big)\\ 
&= \Delta_n \sum_{\sigma \in R_n}  \sigma \Big(e^{(k+\kappa_n)\varepsilon_1} \prod_{\alpha\in \Phi_{\varepsilon_1}^+} \frac{1- q^{-2(\alpha, \varepsilon_1)}e^{-\alpha}}{e^{\frac{\alpha}{2}}-  e^{-\frac{\alpha}{2}}} \Big). 
\end{align*}

For type $\mathsf{B}_n$, we substitute $\kappa_n = n - \frac{1}{2}$ and $\Phi_{\varepsilon_1}^+ = \{\varepsilon_1\} \cup \{\varepsilon_1 \pm \varepsilon_i \mid 2 \le i \le n\}$ in the last line,  and rewrite the denominator using the identities
\[e^{\frac{\varepsilon_1}{2}}-  e^{-\frac{\varepsilon_1}{2}}=\frac{(e^{\varepsilon_1}-  e^{-\varepsilon_1})} {(e^{\frac{\varepsilon_1}{2}}+e^{-\frac{\varepsilon_1}{2}})}, \quad  e^{\frac{\varepsilon_1\pm \varepsilon_i}{2}}-  e^{-\frac{\varepsilon_1\pm\varepsilon_i}{2}}=e^{- \frac{\varepsilon_1\mp \varepsilon_i}{2}}(e^{\varepsilon_1}- e^{\mp\varepsilon_i}).  \]
Using $c_n=2n$ for type $\mathsf{B}_n$, we obtain 
\begin{align*}
& q^{c_n-1}\sum_{\sigma \in R_n}  \sigma \Big(e^{(k+\kappa_n)\varepsilon_1} \prod_{\alpha\in \Phi_{\varepsilon_1}^+} \frac{1- q^{-2(\alpha, \varepsilon_1)}e^{-\alpha}}{e^{\frac{\alpha}{2}}-  e^{-\frac{\alpha}{2}}} \Big) \\ 
&= \sum_{\sigma \in R_n}  \sigma \bigg( e^{k\varepsilon_1} \frac{q-q^{-1}+  q e^{\varepsilon_1}- q^{-1} e^{\varepsilon_1}}{e^{\varepsilon_1}- e^{-\varepsilon_1}} \prod_{i=2}^n \frac{(qe^{\varepsilon_1} - q^{-1}e^{\varepsilon_i})(qe^{\varepsilon_1} - q^{-1}e^{-\varepsilon_i})}{ (e^{\varepsilon_1}- e^{\varepsilon_i})(e^{\varepsilon_1}- e^{-\varepsilon_i}) } \bigg)\\ 
&=\mathbf{Ch}\, G_{n,k}- q^{-k}.
\end{align*}
Therefore, we have  $\Delta_n \mathbf{Ch}\, G_{n,k}= q^{-k} \Delta_n + q^{c_n-1} \mathbf{A}_n(H_{n,k})$ in type $\mathsf{B}_n$.  For types $\mathsf{C}_n$ and $\mathsf{D}_n$, an analogous analysis yields the corresponding identities. 
\end{proof}

To connect with representation theory, we recall the following standard results from \cite{FH13}.

\begin{lemma}\label{lem: extpower}
 Let $V$ be the natural module for the Lie algebra $\mathfrak{g}$ of type $\mathsf{B}_n$, $\mathsf{C}_n$ or $\mathsf{D}_n$. In the following, we set $\varpi_0:=0$. 
 \begin{enumerate}
 \item In type $\mathsf{B}_n\,(n\geq 2)$, the exterior powers satisfy
\[ \bigw^{r} V\cong L(\varpi_r), \quad 0\leq r\leq n-1, \quad \bigw^{n} V\cong L(2\varpi_n),  \]
where $\varpi_r= \varepsilon_1+\cdots +\varepsilon_r$ for $1\leq r\leq n-1$ and $2\varpi_n=\varepsilon_1+\cdots +\varepsilon_n$. Moreover, $ \bigw^{r} V \cong \bigw^{2n+1-r}V $ for $n<r\leq 2n+1$. 
 \item In type $\mathsf{C}_n\,(n\geq 3)$, the exterior power $\bigw^{r} V$ admits the multiplicity-free decomposition
\[ \bigw^{r} V \cong \bigoplus_{s=0}^{\lfloor r/2\rfloor }L(\varpi_{r-2s}), \quad 0\leq r\leq n,   \]
 where $\varpi_r= \varepsilon_1+ \cdots +\varepsilon_r$ for $1\leq r\leq n$. Moreover,  $ \bigw^{r} V \cong \bigw^{2n-r}V $ for $n<r\leq 2n$. 

\item In type $\mathsf{D}_n\,(n\geq 4)$, the exterior powers satisfy
 \begin{align*}
  &\bigw^{r}V \cong L(\varpi_r), \quad 0\leq r\leq n-2,\quad \bigw^{n-1}V\cong L(\varpi_{n-1}+\varpi_n), \\ 
 &\bigw^n V\cong L(2\varpi_{n-1})\oplus L(2\varpi_{n}),  
 \end{align*}
 where $\varpi_r= \varepsilon_1+\cdots +\varepsilon_r$ for $1\leq r\leq n-2$,  $2\varpi_{n-1}=\varepsilon_1+\cdots +\varepsilon_{n-1}-\varepsilon_n$, and  $2\varpi_{n}=\varepsilon_1+\cdots +\varepsilon_{n-1}+\varepsilon_n$. Moreover, $ \bigw^{r} V \cong \bigw^{2n-r}V $ for $n<r\leq 2n$. 
 \end{enumerate}
\end{lemma}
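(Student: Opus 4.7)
The plan is to build each decomposition by combining two classical ingredients: pinpointing highest-weight vectors inside $\bigw^r V$ by direct calculation on weight bases, and then exploiting the $\mathfrak{g}$-invariant bilinear form on $V$ to produce dualities and contractions that cut $\bigw^r V$ down to the claimed irreducibles. First I would fix a weight basis $\{v_i\}$ of $V$ with $v_i$ of weight $\varepsilon_i$ (using the conventions $\varepsilon_{-i}=-\varepsilon_i$ and, in type $\mathsf{B}_n$, $\varepsilon_0=0$), and verify directly that each simple positive root vector annihilates $v_1\wedge v_2\wedge\cdots\wedge v_r$. Rewriting its weight $\varepsilon_1+\cdots+\varepsilon_r$ in the fundamental-weight basis gives $\varpi_r$ in the generic range, $2\varpi_n$ at $r=n$ in type $\mathsf{B}_n$, $\varpi_{n-1}+\varpi_n$ at $r=n-1$ in type $\mathsf{D}_n$, and (after splitting) $2\varpi_n$ together with $2\varpi_{n-1}$ at $r=n$ in type $\mathsf{D}_n$.

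For types $\mathsf{B}_n$ and $\mathsf{D}_n$, the non-degenerate symmetric invariant form on $V$ combined with the one-dimensional top exterior power produces a $\mathfrak{g}$-equivariant perfect pairing $\bigw^r V\times \bigw^{\dim V-r}V\to \mathbb{C}$. Since $V$ is self-dual in both types, so are its exterior powers, and this yields $\bigw^r V\cong \bigw^{\dim V-r}V$ as stated, handling the ``large $r$'' cases once the ``small $r$'' ones are done. For the middle wedge $\bigw^n V$ in type $\mathsf{D}_n$, the Hodge star associated to the symmetric form commutes with $\mathfrak{g}$ and squares to a scalar; its two eigenspaces are spanned on top by appropriate $\mathfrak{g}$-equivariant combinations of $v_1\wedge\cdots\wedge v_n$ and $v_1\wedge\cdots\wedge v_{n-1}\wedge v_{-n}$, whose weights $\varepsilon_1+\cdots+\varepsilon_n=2\varpi_n$ and $\varepsilon_1+\cdots+\varepsilon_{n-1}-\varepsilon_n=2\varpi_{n-1}$ give the claimed splitting $L(2\varpi_n)\oplus L(2\varpi_{n-1})$.

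For type $\mathsf{C}_n$, the symplectic form $\omega$ induces a $\mathfrak{g}$-equivariant contraction $\Phi_r:\bigw^r V\to \bigw^{r-2}V$ via interior multiplication with $\omega$, paired with the opposite wedging-with-$\omega$ map. A direct computation shows that $\Phi_r$ is surjective for $2\le r\le n$, and one checks that its kernel contains $v_1\wedge\cdots\wedge v_r$ and hence contains a copy of $L(\varpi_r)$. The short exact sequence $0\to \ker \Phi_r\to \bigw^r V\to \bigw^{r-2}V\to 0$ then splits by complete reducibility and iterates to yield the multiplicity-free decomposition; Hodge duality coming from the top wedge handles $n<r\le 2n$ exactly as in the orthogonal case.

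The main obstacle is proving that the highest-weight line identified in $\bigw^r V$ (or in $\ker\Phi_r$ for type $\mathsf{C}_n$) actually generates an irreducible submodule \emph{of the correct dimension}, rather than some strictly smaller piece that would leave room for extra summands. I would settle this by matching the Weyl dimension of $L(\lambda)$ against $\binom{\dim V}{r}=\dim\bigw^r V$ in types $\mathsf{B}_n$ and $\mathsf{D}_n$, and by an inductive Weyl-dimension identity in type $\mathsf{C}_n$ forcing $\dim\ker\Phi_r=\dim L(\varpi_r)$. The only additional care is needed for the type $\mathsf{D}_n$ split at $r=n$, where a direct dimension count shows each Hodge eigenspace is half of $\binom{2n}{n}$ and matches $\dim L(2\varpi_n)=\dim L(2\varpi_{n-1})$, forcing irreducibility.
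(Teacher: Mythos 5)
Your proposal is correct, but note that the paper does not prove this lemma at all: it is stated as a recalled standard fact with a citation to Fulton--Harris, immediately after the sentence ``we recall the following standard results from [FH13].'' What you have written is essentially a reconstruction of the classical arguments from that reference: highest-weight vectors $v_1\wedge\cdots\wedge v_r$ identified on a weight basis, self-duality of $V$ together with the triviality of $\bigw^{\dim V}V$ to get the reflection $\bigw^rV\cong\bigw^{\dim V-r}V$, the symplectic contraction $\Phi_r$ and the Lefschetz-type splitting in type $\mathsf{C}_n$, and Weyl-dimension matching to rule out extra summands. All of these steps are sound, and the dimension count is the right way to close the irreducibility question, since $\bigw^rV$ has several dominant weights and a multiplicity-one argument alone would not suffice. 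Two small remarks: in the middle degree of type $\mathsf{D}_n$ the Hodge star is not really needed --- since the two vectors $v_1\wedge\cdots\wedge v_n$ and $v_1\wedge\cdots\wedge v_{n-1}\wedge v_{-n}$ have distinct weights $2\varpi_n$ and $2\varpi_{n-1}$ and are each annihilated by all simple positive root vectors, complete reducibility plus the identity $\dim L(2\varpi_{n-1})+\dim L(2\varpi_n)=\binom{2n}{n}$ already forces $\bigw^nV\cong L(2\varpi_{n-1})\oplus L(2\varpi_n)$ (and, being weight-preserving, the star fixes each of those two lines individually rather than mixing them); and the surjectivity of $\Phi_r$ for $r\le n$, which you assert, deserves the one-line justification via the $\mathfrak{sl}_2$-triple generated by wedging with and contracting against the symplectic form.
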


\begin{lemma}\label{lem: genfun} Maintain notation as in \lemref{lem: extpower}. The generating function of the characters $\mathbf{Ch}(\bigw^rV)$  of the exterior powers is given as follows. 
 \begin{enumerate}
   \item In type $\mathsf{B}_n\, (n\geq 2)$, we have 
          \[ \sum_{r=0}^{2n+1} {\bf Ch}(\bigw^r V)t^r= (1+t)\prod_{i=1}^n (1+te^{\varepsilon_i})(1+te^{-\varepsilon_i}).   \]
   \item In both types $\mathsf{C}_n\, (n\geq 3)$ and $\mathsf{D}_n\,(n\geq 4)$, we have 
   \[ \sum_{r=0}^{2n} {\bf Ch}(\bigw^rV)t^r= \prod_{i=1}^n (1+te^{\varepsilon_i})(1+te^{-\varepsilon_i}).   \]
 \end{enumerate}
\end{lemma}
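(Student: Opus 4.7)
The plan is to reduce both statements to the standard character identity that, for any finite-dimensional $\fg$-module $U$ with weight space decomposition $U=\bigoplus_{\mu} U_\mu$, the generating function for the characters of its exterior powers factorises as
\[
\sum_{r\geq 0}\mathbf{Ch}\!\left(\bigw^r U\right)\!t^r \;=\; \prod_{\mu}(1+te^{\mu})^{\dim U_\mu}.
\]
Once this identity is in hand, the lemma becomes a matter of reading off the weights of the natural module $V$ in each classical type.

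First I would establish the generating identity above. Choose a weight basis $\{v_1,\dots,v_d\}$ of $U$ with $v_j$ of weight $\mu_j$. For $1\leq r\leq d$, the vectors $v_{j_1}\wedge\cdots\wedge v_{j_r}$ with $j_1<\cdots<j_r$ form a weight basis of $\bigw^r U$, the corresponding weight being $\mu_{j_1}+\cdots+\mu_{j_r}$. Taking characters and summing over $r$ gives
\[
\sum_{r=0}^{d}\mathbf{Ch}\!\left(\bigw^r U\right)\!t^r \;=\; \sum_{r=0}^{d}\sum_{j_1<\cdots<j_r} t^r e^{\mu_{j_1}+\cdots+\mu_{j_r}} \;=\; \prod_{j=1}^{d}(1+te^{\mu_j}),
\]
which is exactly the required factorisation after grouping basis vectors by their weight.

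Next I would specialise to the natural module $V$ in each type. In type $\mathsf{B}_n$, the module $V$ is $(2n+1)$-dimensional with weights $\{\varepsilon_1,\dots,\varepsilon_n,\,0,\,-\varepsilon_n,\dots,-\varepsilon_1\}$, each appearing with multiplicity one, so the product becomes
\[
(1+t)\prod_{i=1}^{n}(1+te^{\varepsilon_i})(1+te^{-\varepsilon_i}),
\]
which is assertion (1). In types $\mathsf{C}_n$ and $\mathsf{D}_n$, the module $V$ is $2n$-dimensional with weights $\{\pm\varepsilon_1,\dots,\pm\varepsilon_n\}$ (each with multiplicity one and no zero weight), yielding the product in assertion (2). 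This identifies the generating function on the nose in all three cases.

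There is no real obstacle here: the argument is purely linear-algebraic and uses nothing beyond weight space decompositions. The only minor point worth flagging is that in types $\mathsf{B}_n$ and $\mathsf{D}_n$ one should double-check the weights of $V$ against the conventions fixed in \secref{sec: QuanGrp}, in particular that $V = L(\varpi_1)$ has highest weight $\varepsilon_1$ and weights $\{\pm\varepsilon_i\}$ (plus a zero weight in type $\mathsf{B}_n$), which is immediate from the description of $\Phi^+$ and $\Pi$ given there.
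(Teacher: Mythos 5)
Your proposal is correct and is precisely the standard weight-basis argument that the paper implicitly relies on by citing \cite{FH13} without proof: the wedges of distinct weight vectors give a weight basis of $\bigw^r V$, so the generating function factorises over the weights of $V$, and the stated products follow from the weights $\{\pm\varepsilon_i\}$ (together with $0$ in type $\mathsf{B}_n$) of the natural module. Nothing further is needed.
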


\subsection{Representation-theoretic interpretation}
We now reformulate the elements $G_{n,k}$ in terms of characters of finite-dimensional representations of $\fg$.
\subsubsection{Type $\mathsf{B}_n$} We introduce some notation to facilitate the statement of the main result. In the case of type $\mathsf{B}_n\,(n\geq 2)$, for $0\leq r\leq 2n-1$, we define $\bar{r}$ and $\omega_{\bar{r}}$ as follows: 
\begin{align*}
\text{Type $\mathsf{B}_n$:} \quad 
& \bar{r} := \min\{r, 2n - 1 - r\} \in \{0, 1, \dots, n - 1\}, \\
& \mu_0 := 0, \quad 
  \mu_{\bar{r}} := \varepsilon_2 + \cdots + \varepsilon_{\bar{r} + 1}, 
  \quad \text{for } 1 \leq \bar{r} \leq n - 1.
\end{align*}
For $k\geq 0$ and $0\leq r\leq 2n-1$, we define $\lambda_{k}^r$ by 
\[  \lambda_k^r:=(k-r)\varepsilon_1+ \mu_{\bar{r}}.  \] 
When $k-r\geq 1$, $\lambda_k^r$ is a dominant integral weight of type $\mathsf{B}_n$ and can be represented  by a Young diagram of hook shape. 

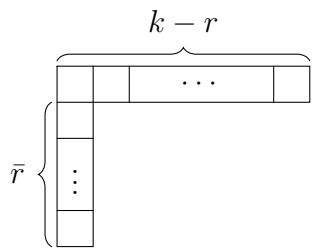
\begin{figure}[ht]
\begin{tikzpicture}[x=6mm,y=6mm,line cap=round, scale=0.8] 
  \def\Arm{7} 
  \def\Leg{5} 

  \draw (0,0) -- (\Arm,0) -- (\Arm,-1) -- (1,-1) -- (1,-\Leg) -- (0,-\Leg) -- (0,0);

  \draw (1,0) -- (1,-1);                 
  \draw (2,0) -- (2,-1);                 
  \draw (\Arm-1,0) -- (\Arm-1,-1);       
  \draw (0,-1) -- (1,-1);   
  \draw (0,-2) -- (1,-2);                
  \draw (0,-4) -- (1,-4);

  \node at ({0.5*\Arm+0.5},-0.5) {$\cdots$};
  \node at (0.5,-3) {$\vdots$};

  \draw[decorate,decoration={brace,amplitude=5pt}]
    (0,0.15) -- (\Arm,0.15) node[midway,above=6pt] {$k-r$};
  \draw[decorate,decoration={brace,amplitude=5pt,mirror}]
    (-0.15,-1) -- (-0.15,-\Leg) node[midway,left=6pt] {$\bar r$};
\end{tikzpicture}
\caption{The hook partition $\lambda_k^r$.}
\label{fig: youngdia}
\end{figure}

\begin{theorem}\label{thm: ChGtypeB}
Maintain notation as above. In type $\mathsf{B}_n\,(n\geq 2)$, for all integers  $k\geq 0$, we have 
\[
\mathbf{Ch}\,G_{n,k} = 
\begin{cases}
1 + \displaystyle\sum_{r=0}^{2n-1} q^{2n-2r-1}, & \text{if } k = 0, \\
\displaystyle\sum_{r=0}^{k-1} (-1)^r q^{2n - 2r - 1} \chi(\lambda_k^r), & \text{if } 1 \leq k \leq 2n - 1 \text{ and } k \text{ is odd}, \\
q^{-k} + \displaystyle\sum_{r=0}^{\min\{k-1, 2n-1\}} (-1)^r q^{2n - 2r - 1} \chi(\lambda_k^r), & \text{otherwise,}
\end{cases}
\]
where $\chi(\lambda_k^r)$ denotes the character of the simple module $L(\lambda_k^r)$ with highest weight $\lambda_k^r$. 
\end{theorem}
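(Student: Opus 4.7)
The plan is to start from \propref{prop: Combid}, which in type $\mathsf{B}_n$ reads $\Delta_n\,\mathbf{Ch}\,G_{n,k} = q^{-k}\Delta_n + q^{2n-1}\mathbf{A}_n(H_{\mathsf{B}_n,k})$, so the theorem reduces to computing $\mathbf{A}_n(H_{\mathsf{B}_n,k})$ as a multiple of $\Delta_n$ and then combining with the residual $q^{-k}$ term. The central idea is to expand $H_{\mathsf{B}_n,k}$ as a sum of shifted exponentials whose antisymmetrizations either recover hook characters or exactly cancel the $q^{-k}\Delta_n$ correction.

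First, setting $t:=-q^{-2}e^{-\varepsilon_1}$, the product factor in $H_{\mathsf{B}_n,k}$ becomes $(1+t)\prod_{i=2}^n(1+te^{\varepsilon_i})(1+te^{-\varepsilon_i})$, which by \lemref{lem: genfun}(1) applied to the $\mathsf{B}_{n-1}$ subsystem spanned by $\varepsilon_2,\ldots,\varepsilon_n$ equals $\sum_{r=0}^{2n-1}\mathbf{Ch}_{\mathsf{B}_{n-1}}(\bigw^r V)\,t^r$. \lemref{lem: extpower}(1) then identifies each $\mathbf{Ch}_{\mathsf{B}_{n-1}}(\bigw^r V)$ with $\chi_{\mathsf{B}_{n-1}}(\mu_{\bar r})$, yielding
\[
H_{\mathsf{B}_n,k}\;=\;\sum_{r=0}^{2n-1}(-q^{-2})^r\, e^{\rho_n+(k-r)\varepsilon_1}\,\chi_{\mathsf{B}_{n-1}}(\mu_{\bar r}).
\]
Running the coset decomposition $W_n=\bigsqcup_{\sigma\in R_n}\sigma W_{n-1}$ together with the $W_{n-1}$-invariance of $\chi_{\mathsf{B}_{n-1}}(\mu_{\bar r})$ and \lemref{lem: altiden} applied to $\mathsf{B}_{n-1}$ — mirroring the argument used in the proof of \propref{prop: Combid} — I collapse the antisymmetrization to
\[
\mathbf{A}_n(H_{\mathsf{B}_n,k})\;=\;\sum_{r=0}^{2n-1}(-q^{-2})^r\,\mathbf{A}_n\!\left(e^{\rho_n+\lambda_k^r}\right).
\]

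Next I would classify each $\mathbf{A}_n(e^{\rho_n+\lambda_k^r})$. A direct computation shows that the coefficients of $\rho_n+\lambda_k^r$ at $\varepsilon_2,\ldots,\varepsilon_n$ form the set $\{\tfrac{1}{2},\tfrac{3}{2},\ldots,n-\tfrac{1}{2}\}\setminus\{n-\bar r-\tfrac{1}{2}\}$, while the coefficient at $\varepsilon_1$ is $a_r:=k-r+n-\tfrac12$. Three scenarios arise: \emph{(i) dominant:} if $k-r\ge 1$ then $\lambda_k^r$ is a dominant hook weight and $\mathbf{A}_n(e^{\rho_n+\lambda_k^r})=\Delta_n\chi(\lambda_k^r)$; \emph{(ii) singular:} if $a_r\in\pm(\{\tfrac12,\ldots,n-\tfrac12\}\setminus\{n-\bar r-\tfrac12\})\cup\{0\}$, then $\rho_n+\lambda_k^r$ is stabilised by a reflection in one of the roots $\varepsilon_1-\varepsilon_i$, $\varepsilon_1+\varepsilon_i$ or $\varepsilon_1$, so the contribution vanishes by \lemref{lem: altvanish}; \emph{(iii) exceptional:} if $|a_r|=n-\bar r-\tfrac12$, then the multiset of coefficients of $\rho_n+\lambda_k^r$ coincides with that of $\rho_n$, so $\mathbf{A}_n(e^{\rho_n+\lambda_k^r})=\pm\Delta_n$. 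A short enumeration shows that case (iii) occurs precisely for all $r\in\{0,\ldots,2n-1\}$ when $k=0$, and for the unique value $r=(k+2n-1)/2$ with $\bar r=(2n-1-k)/2$ when $k$ is odd with $1\le k\le 2n-1$; no case-(iii) terms arise for even $k\ge 2$ or for $k\ge 2n$.

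The final step is the sign bookkeeping in case (iii): when $a_r>0$ the Weyl element returning $\rho_n+\lambda_k^r$ to $\rho_n$ consists of $\bar r$ adjacent transpositions shifting $\varepsilon_1$ past the larger half-integers (contributing sign $(-1)^{\bar r}$), while when $a_r<0$ an additional reflection $\varepsilon_1\mapsto-\varepsilon_1$ contributes an extra factor $-1$. Multiplying the expansion by $q^{2n-1}\Delta_n^{-1}$ and adding $q^{-k}$, the case (i) terms contribute $\sum_{r=0}^{\min(k-1,2n-1)}(-1)^rq^{2n-2r-1}\chi(\lambda_k^r)$, matching the hook sum in the theorem. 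For odd $k\in[1,2n-1]$, the unique case-(iii) term simplifies, via $r+\bar r=2n-1$ and $q^{2n-1-2r}=q^{-k}$, to exactly $-q^{-k}$, cancelling the $q^{-k}$ correction; for even $k\ge 2$ and for $k\ge 2n$ no case-(iii) term appears, so $q^{-k}$ persists alongside the hook sum; and for $k=0$, pairing each $r\le n-1$ with its reflection $2n-1-r$ (where the extra sign flip exactly compensates) gives $\sum_{r=0}^{2n-1}q^{2n-2r-1}$, which combines with $q^{-k}=1$ to yield the special $k=0$ formula. The main obstacle is the sign analysis in case (iii), which requires careful tracking of both the inversion count in the relevant Weyl permutation and the potential $\varepsilon_1$-sign flip when $a_r<0$.
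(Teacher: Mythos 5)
Your proposal is correct and follows essentially the same route as the paper's proof: the reduction via \propref{prop: Combid}, the expansion of $H_{\mathsf{B}_n,k}$ through the generating function of \lemref{lem: genfun} for the embedded $\mathsf{B}_{n-1}$ subsystem, the collapse of the antisymmetrisation to $\sum_r(-q^{-2})^r\mathbf{A}_n(e^{\rho_n+\lambda_k^r})$ via the coset decomposition and \lemref{lem: altiden}, and the same dominant/singular/exceptional trichotomy with identical sign bookkeeping (your exceptional locus $r=(k+2n-1)/2$ and the cancellation of $-q^{-k}$ against the correction term match the paper exactly; the inclusion of $0$ in your singular condition is vacuous since the $\varepsilon_1$-coefficient is always a half-integer in type $\mathsf{B}$).
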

\begin{proof}
 Since $\mathbf{Ch}\, G_{n,k}$ and $\mathbf{A}_{n}(H_{n,k})$ are related by the identity in \propref{prop: Combid}, we first compute  $\mathbf{A}_{n}(H_{n,k})$. Let $U$ be the natural module of $\mathfrak{so}_{2n-1}$ (viewed as the type $\mathsf{B}_{n-1}$ subalgebra embedded in the type $\mathsf{B}_n$), with  weights   $\varepsilon_0$ and  $\varepsilon_{\pm i}, 2\leq i\leq n$.   Using the generating function from \lemref{lem: genfun} with $t = -q^{-2}e^{-\varepsilon_1}$ for $\mathfrak{so}_{2n-1}$, the expression for $H_{n,k}$ can be rewritten as follows:
\begin{align*}
\mathbf{A}_{n}(H_{n,k}) &= \mathbf{A}_n \Big(e^{\rho_n+k \varepsilon_1} (1+t)\prod_{i=2}^n (1+t e^{\varepsilon_i})(1+t e^{-\varepsilon_i})\Big) = \sum_{r=0}^{2n-1}\mathbf{A}_n \Big(e^{\rho_n+k \varepsilon_1}  {\bf Ch}(\bigw^r U)t^r  \Big) \\
&=  \sum_{r=0}^{2n-1} (-q^{-2})^r \mathbf{A}_n \Big(e^{\rho_n+(k-r) \varepsilon_1}  {\bf Ch}(\bigw^r U)\Big). 
\end{align*}

Using part (1) of \lemref{lem: extpower} for $\mathfrak{so}_{2n-1}$, for $0\leq r\leq 2n-1$, we have  $\bigw^rU \cong \bigw^{2n-1-r}U \cong L(\mu_{\bar{r}})$  as $\mathfrak{so}_{2n-1}$-modules.
Hence, for each $r=0,1, \dots,  2n-1$, upon splitting the alternating sum over the Weyl group $W_n$ and  invoking  \lemref{lem: altiden},  we have 
\begin{align*}
\mathbf{A}_n  \Big(e^{\rho_n+(k-r) \varepsilon_1}  {\bf Ch}(\bigw^r U)\Big) 
&= \sum_{\sigma\in R_n}  \mathbf{sgn}(\sigma) \sigma (e^{(n+k-r-\frac{1}{2})\varepsilon_1} )  \sum_{u\in W_{n-1}} \mathbf{sgn}(u)u (e^{\rho_{n-1}} \chi(\mu_{\bar{r}})  )  \\
&= \sum_{\sigma\in R_n}  \mathbf{sgn}(\sigma) \sigma (e^{(n+k-r-\frac{1}{2})\varepsilon_1} )  \sum_{u\in W_{n-1}} \mathbf{sgn}(u)u (e^{\rho_{n-1}+ \mu_{\bar{r}}})\\ 
&=\mathbf{A}_n  (e^{(k-r)\varepsilon_1+ \mu_{\bar{r}} +\rho_n})= \mathbf{A}_n  (e^{\lambda_k^r+ \rho_n }). 
\end{align*}
Combining this with \propref{prop: Combid}, we obtain
\[
    \Delta_n \mathbf{Ch}\, G_{n,k} = q^{-k}\Delta_n +  q^{2n-1} \sum_{r=0}^{2n-1} (-q^{-2})^r \mathbf{A}_n  (e^{\lambda_k^r + \rho_n}).
\]
It remains to determine the values of $r$ for which $\mathbf{A}_n(e^{\rho_n+\lambda_k^r}) \neq 0$; for these cases, dividing by $\Delta_n$ and applying the Weyl character formula yields the desired expression of  $\mathbf{Ch}\,G_{n,k}$ in terms of irreducible characters.

{\bf Case 1}: $0\leq r\leq k-1$.  Then  $k\geq 1$,  and  $\lambda^r_k$ is a  dominant integral weight. By the Weyl character formula, we have
\[ \mathbf{A}_n (e^{\lambda_k^r+ \rho_n})= \Delta_n \chi(\lambda_k^r), \quad 0\leq r \leq \min\{k-1, 2n-1\}.  \]

{\bf Case 2}: $k\leq r\leq 2n-1$.   We analyse this case by looking  at the coordinates of  $\lambda_k^r+\rho_n$. For $1\leq i\leq n$, let $c_i$ denote the coefficient of $\varepsilon_i$  of $\lambda_k^r+\rho_n$. Then,  for $0\leq r\leq 2n-1$, we have 
\begin{align*}
 &c_1 = n-\frac{1}{2}+k-r, \\ 
 &S_1:=\{c_2,c_3 \dots ,c_{\bar{r}+1}\}=\{n-\frac{1}{2}, n-\frac{3}{2},  \dots, n-\bar{r}+\frac{1}{2}\}, \\ 
 &S_2:=\{c_{\bar{r}+2}, \dots, c_n\}= \{n-\bar{r}-\frac{3}{2},n-\bar{r}-\frac{5}{2}, \dots, \frac{1}{2}\}, 
\end{align*}
where $\bar{r}= \min\{r,2n-1-r\}$.  Conventionally, we set $S_1:=\emptyset$ when $\bar{r}=0$ and $S_2:=\emptyset$ when $\bar{r}=n-1$. Thus, for $0\leq r \leq 2n-1$,  the union $S_1\cup S_2$ covers every half integer from $\frac{1}{2}$ up to $n-\frac{1}{2}$ except for the missing number $n-\bar{r}-\frac{1}{2}$. On the other hand, since $k\leq r\leq 2n-1$, we have  $c_1\in \{\pm \frac{1}{2},\pm \frac{3}{2}, \dots, \pm(n-\frac{1}{2})\}$, so exactly one of the following occurs:
\begin{enumerate}
\item [(C1)] If $\pm c_1= c_i$ for some $c_i\in S_1\cup S_2$, then $(\lambda_k^r+ \rho_n, \varepsilon_1 \mp \varepsilon_i)=0$. It then follows from \lemref{lem: altvanish} that $\mathbf{A}_n(e^{\lambda_k^r+ \rho_n})=0$;
\item [ (C2)] If $\pm c_1=n-\bar{r}-\frac{1}{2}$ (the missing number), then  $\rho_n+\lambda_k^r$ is equal to $\rho_n$ up to a permutation  of coordinates in $W_n$. This permutation is obtained  by (optionally) applying  the reflection $s_{\varepsilon_1}$ when $-c_1=n-\bar{r}-\frac{1}{2}$, and  then applying successive adjacent reflections $s_{\varepsilon_i-\varepsilon_{i+1}}$ for $i=1,\dots,\bar{r}$, each contributing a factor of $-1$ to the sign character. Thus, if $\pm c_1=n-\bar{r}-\frac{1}{2}$, we have
\[ \mathbf{A}_n(e^{\lambda_k^r+ \rho_n})=\pm (-1)^{\bar{r}}\mathbf{A}_n(e^{\rho_n}). \]
\end{enumerate}
Therefore, $\mathbf{A}_n(e^{\lambda_k^r+ \rho_n})$ is nonzero precisely when (C2) holds. We now have two  subcases according to  the value of $k$. 

{\bf Subcase 2.1:} $k=0$, so we have $c_1= n-\frac{1}{2}-r$. If  $0\leq r\leq n-1$, then  $\bar{r} = r$, and $c_1 = n-\bar{r}-\frac{1}{2}$, whence (C2) is satisfied. If $n\leq r\leq 2n-1$, then  $\bar{r} = 2n-1-r$, and $-c_1 = r-n+\frac{1}{2} = n-\bar{r}-\frac{1}{2}$, whence (C2) is again satisfied. Therefore, we have 
\[  \mathbf{A}_n(e^{\lambda_k^r+ \rho_n})=(-1)^r \mathbf{A}_n(e^{\rho_n}), \quad k=0\leq r\leq 2n-1. \]

{\bf Subcase 2.2:} $k\geq 1$. If $ r\leq n-1$, then $\bar{r}=r$. In this case, (C2) cannot hold, since either condition $c_1=n-\bar{r}-\tfrac{1}{2}$ or $-c_1=n-\bar{r}-\tfrac{1}{2}$ would contradict the assumption $k\geq 1$. Thus only (C1) is satisfied, and
\[
\mathbf{A}_n(e^{\lambda_k^r+ \rho_n})=0, \quad 1\leq k\leq r\leq n-1.
\]
If $n\leq r\leq 2n-1$, then $\bar{r} = 2n-1-r$.  The condition $-c_1=n-\bar{r}-\frac{1}{2}$ leads to $k=0$, which contradicts the assumption $k\geq 1$. The condition $c_1=n-\bar{r}-\frac{1}{2}$ leads to $r=n+\frac{k-1}{2}$, which forces $k$ to be an odd integer.

 Thus, for $1\leq k\leq r\leq 2n-1$, (C2) holds if and only if $k$ is odd and $r=n+\frac{k-1}{2}$. In this case, we have 
\[ \mathbf{A}_n(e^{\lambda_k^r+ \rho_n})= (-1)^{\bar{r}}\mathbf{A}_n(e^{\rho_n})= (-1)^{n-\frac{k+1}{2}}\mathbf{A}_n(e^{\rho_n}). \]

Summarising the above case analysis according to the range of $k$, and applying the Weyl denominator formula $\Delta_n = \mathbf{A}_n(e^{\rho_n})$ and the Weyl character formula $\chi(\lambda_k^r)= \mathbf{A}_n  (e^{\lambda_k^r + \rho_n})/ \Delta_n$, we obtain the three stated expressions for $\mathbf{Ch}\,G_{n,k}$ in terms of characters. 
\end{proof}

\subsubsection{Type $\mathsf{C}_n$}
We proceed to address type $\mathsf{C}_n\,(n\geq 3)$. Similarly, we introduce the following notation. For $0\leq r\leq 2n$, we define 
\begin{align*}
\text{Type $\mathsf{C}_n$:} \quad 
& \bar{r} := \min\{r, 2n - r, n-1\} \in \{0, 1, \dots, n - 1\}, \\
& \mu_0 := 0, \quad 
  \mu_{\bar{r}} := \varepsilon_2 + \cdots + \varepsilon_{\bar{r} + 1}, 
  \quad \text{for } 1 \leq \bar{r} \leq n - 1.
\end{align*}
For $k\geq 0$ and $0\leq r\leq 2n$, we define weights $\lambda_{k}^r$ and signs $\tau_r$ by 
\begin{align*}
\lambda_k^r&:=(k-r)\varepsilon_1+ \mu_{\bar{r}},\\ 
\tau_r&:= \begin{cases}
 1, \quad &\text{if } 0\leq r\leq n-1, \\ 
 0, \quad &\text{if } r=n,\\ 
 -1, \quad &\text{if } n+1\leq r\leq 2n.  
\end{cases}
\end{align*} 
When $k-r\geq 1$, $\lambda_k^r$ is a dominant integral weight of type $\mathsf{C}_n$ represented by the  Young diagram  in \figref{fig: youngdia}. The sign $\tau_r$ indicates  whether $\lambda_k^r$ contributes positively, negatively, or not at all to the character expression of $\mathbf{Ch}\,G_{n,k}$; in particular, although $\lambda_k^n$ is defined, it contributes nothing.

\begin{theorem}\label{thm: ChGtypeC}
Maintain notation as above. In type $\mathsf{C}_n\,(n\geq 3)$, for all integers  $k\geq 0$, we have 
\[
\mathbf{Ch}\,G_{n,k} = 
\begin{cases}
 \displaystyle\sum_{r=0}^{n-1}q^{2n-2r}+ \sum_{r=n+1}^{2n}q^{2n-2r}, & \text{if } k = 0, \\
-q^{-k}+\displaystyle\sum_{r=0}^{k-1}(-1)^r\tau_r q^{2n-2r} \chi(\lambda_k^r), & \text{if } 1 \leq k \leq 2n  \text{ and } k \text{ is even}, \\
\displaystyle\sum_{r=0}^{\min\{k-1,2n\}} (-1)^{r}\tau_r q^{2n-2r} \chi(\lambda_k^r), & \text{otherwise,}
\end{cases}
\]
where $\chi(\lambda_k^r)$ denotes the character of the simple module $L(\lambda_k^r)$ with highest weight $\lambda_k^r$. 
\end{theorem}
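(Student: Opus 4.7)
The plan is to follow the template of \thmref{thm: ChGtypeB}, starting from the identity $\Delta_n \mathbf{Ch}\,G_{n,k} = q^{2n}\mathbf{A}_n(H_{n,k})$ provided by \propref{prop: Combid} for type $\mathsf{C}_n$, and to transform $\mathbf{A}_n(H_{n,k})$ into a weighted sum of expressions $\mathbf{A}_n(e^{\lambda_k^r+\rho_n})$ that can be evaluated via coordinate analysis. The key new features in type $\mathsf{C}_n$ are the extra factor $(1-q^{-4}e^{-2\varepsilon_1})$ in $H_{\mathsf{C}_n,k}$, arising from the long root $2\varepsilon_1$, together with the fact that $\bigw^r U$ is \emph{not} irreducible as a module over the rank $n-1$ subalgebra $\mathfrak{sp}_{2n-2}$ obtained by deleting $\alpha_1$. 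These two features will interact to produce both the sign $\tau_r$ and, crucially, the vanishing $\tau_n=0$.

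The first main step is to expand $\Phi_n:=\prod_{i=2}^n(1-q^{-2}e^{-(\varepsilon_1-\varepsilon_i)})(1-q^{-2}e^{-(\varepsilon_1+\varepsilon_i)})$ using \lemref{lem: genfun}(2) applied to $\mathfrak{sp}_{2n-2}$ with $t=-q^{-2}e^{-\varepsilon_1}$, giving $\Phi_n=\sum_{r=0}^{2n-2}(-q^{-2})^r e^{-r\varepsilon_1}\Psi_r$ where $\Psi_r=\mathbf{Ch}(\bigw^r U)$ for $U$ the natural $\mathfrak{sp}_{2n-2}$-module. Multiplying by $(1-q^{-4}e^{-2\varepsilon_1})$ and reindexing the shifted copy telescopes the expansion into a sum whose general term carries the difference $\Psi_r-\Psi_{r-2}$. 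Combining \lemref{lem: extpower}(2), which gives $\bigw^r U\cong\bigoplus_{s=0}^{\lfloor r/2\rfloor}L(\mu_{r-2s})$, with the symmetry $\bigw^rU\cong\bigw^{2n-2-r}U$, one checks that $\Psi_r-\Psi_{r-2}$ reduces to a single irreducible $\mathfrak{sp}_{2n-2}$-character with sign exactly matching $\tau_r$: positive for $r\le n-1$, zero at $r=n$ (since $\Psi_n=\Psi_{n-2}$), and negative for $r\ge n+1$. Together with the boundary contributions at $r=2n-1$ and $r=2n$, this yields the compact identity $(1-q^{-4}e^{-2\varepsilon_1})\Phi_n=\sum_{r=0}^{2n}(-q^{-2})^r e^{-r\varepsilon_1}\tau_r\chi^{\mathfrak{sp}_{2n-2}}(\mu_{\bar r})$.

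Next, multiplying through by $e^{\rho_n+k\varepsilon_1}$, applying $\mathbf{A}_n$, and performing the coset splitting $W_n=\bigsqcup_{\sigma\in R_n}\sigma W_{n-1}$ combined with \lemref{lem: altiden} exactly as in the proof of \propref{prop: Combid}, each term collapses to $\mathbf{A}_n(e^{\lambda_k^r+\rho_n})$ with $\lambda_k^r=(k-r)\varepsilon_1+\mu_{\bar r}$, yielding $\Delta_n\mathbf{Ch}\,G_{n,k}=\sum_{r=0}^{2n}(-1)^r q^{2n-2r}\tau_r\mathbf{A}_n(e^{\lambda_k^r+\rho_n})$. I would then evaluate each $\mathbf{A}_n(e^{\lambda_k^r+\rho_n})$ via the coordinate analysis from \thmref{thm: ChGtypeB}: writing the coordinates of $\lambda_k^r+\rho_n$ as $(n+k-r,\,n,\,n-1,\dots,n-\bar r+1,\,n-\bar r-1,\dots,1)$, the $n-1$ coordinates after the first cover $\{1,\dots,n\}\setminus\{n-\bar r\}$. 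Three alternatives arise: (i) if $r\le k-1$ then $n+k-r>n$ and $\lambda_k^r$ is dominant, contributing $\Delta_n\chi(\lambda_k^r)$ via the Weyl character formula; (ii) if $r\ge k$ and $|n+k-r|$ is either $0$ or lies in $\{1,\dots,n\}\setminus\{n-\bar r\}$, \lemref{lem: altvanish} forces vanishing; (iii) if $r\ge k$ and $|n+k-r|=n-\bar r$, then $\lambda_k^r+\rho_n$ is $W_n$-conjugate to $\rho_n$, and the sign is computed from a cyclic shift together with a possible application of the long-root reflection $s_{2\varepsilon_1}$. A case split on the parity of $k$ then isolates the $k=0$ specialisation, the $-q^{-k}$ correction when $k$ is even, and the purely dominant-weight contribution when $k$ is odd.

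The main obstacle lies in the first main step: verifying that the long-root factor $(1-q^{-4}e^{-2\varepsilon_1})$ and the multiplicity-free reducibility of $\bigw^r U$ conspire to yield exactly one irreducible character $\tau_r\chi^{\mathfrak{sp}_{2n-2}}(\mu_{\bar r})$ at each index $r$, including the delicate cancellation $\Psi_n-\Psi_{n-2}=0$ responsible for $\tau_n=0$. Once this uniform expansion is in hand, the coset splitting and coordinate analysis are mechanical, although the sign bookkeeping in alternative (iii) is slightly more intricate than in type $\mathsf{B}_n$ because both the cyclic shift and the long-root reflection $s_{2\varepsilon_1}$ contribute to $\mathbf{sgn}$.
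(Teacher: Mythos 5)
Your proposal is correct and follows essentially the same route as the paper: expand $\prod_{i=2}^n(1+te^{\pm\varepsilon_i})$ via the $\mathfrak{sp}_{2n-2}$ generating function with $t=-q^{-2}e^{-\varepsilon_1}$, absorb the long-root factor as $(1-t^2)$ to telescope into the differences $\mathbf{Ch}(\bigw^rU)-\mathbf{Ch}(\bigw^{r-2}U)$ (which the multiplicity-free decomposition and the symmetry $\bigw^rU\cong\bigw^{2n-2-r}U$ collapse to $\tau_r\chi(\mu_{\bar r})$, with the cancellation at $r=n$), then apply the coset splitting and the coordinate analysis on $\lambda_k^r+\rho_n$, including the extra vanishing case $c_1=0$ from the long root $2\varepsilon_1$. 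The paper's proof carries out exactly these steps, so no further comparison is needed.
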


\begin{proof}
The approach is similar to that of \thmref{thm: ChGtypeB}. The only difference is the representation theory. In accordance with the embedding of $\mathfrak{sp}_{2n-2}$ into $\mathfrak{sp}_{2n}$,  let $U$ be the natural module of $\mathfrak{sp}_{2n-2}$ whose  weights are  $\varepsilon_{\pm i}, 2\leq i\leq n$. Using  the generating function from \lemref{lem: genfun} for $\mathfrak{sp}_{2n-2}$ with $t = -q^{-2}e^{-\varepsilon_1}$, the expression for $H_{n,k}$ of type $\mathsf{C}_n$ can be rewritten as follows:
\begin{align*}
\mathbf{A}_{n}(H_{n,k}) &= \mathbf{A}_n \Big(e^{\rho_n+k \varepsilon_1} (1-t^2)\prod_{i=2}^n (1+t e^{\varepsilon_i})(1+t e^{-\varepsilon_i})\Big) = \sum_{r=0}^{2n-2}\mathbf{A}_n \Big(e^{\rho_n+k \varepsilon_1} (1-t^2) {\bf Ch}(\bigw^r U)t^r  \Big) \\
&=  \sum_{r=0}^{2n} \mathbf{A}_n \Big(e^{\rho_n+k \varepsilon_1}  ({\bf Ch}(\bigw^r U)- {\bf Ch}(\bigw^{r-2} U))t^r\Big),
\end{align*}
where $\mathbf{Ch}(\bigw^{-1}U)=\mathbf{Ch}(\bigw^{-2}U):=0$ and note also that $\mathbf{Ch}(\bigw^{2n-1}U)=\mathbf{Ch}(\bigw^{2n}U)=0$. Using part (2) of \lemref{lem: extpower} for $\mathfrak{sp}_{2n-2}$, we have 
\[
{\bf Ch}(\bigw^r U)- {\bf Ch}(\bigw^{r-2} U)= 
\begin{cases}
\chi(\mu_{r}), \quad &0\leq r\leq n-1\\ 
0, \quad  &r=n, \\ 
-\chi(\mu_{2n-r}), \quad & n+1\leq r\leq 2n.
\end{cases}
\]
Substituting the above expressions and $t = -q^{-2}e^{-\varepsilon_1}$ back into the previous sum, we obtain
\begin{align*}
\mathbf{A}_n(H_{n,k})&= \sum_{r=0}^{n-1}(-q^{-2})^r \mathbf{A}_n(e^{\rho_n+ (k-r)\varepsilon_1} \chi(\mu_r)) -\sum_{r=n+1}^{2n} (-q^{-2})^r\mathbf{A}_n(e^{\rho_n+(k-r)\varepsilon_1} \chi(\mu_{2n-r}))\\
&=  \sum_{r=0}^{n-1}(-q^{-2})^r \mathbf{A}_n(e^{\rho_n+ (k-r)\varepsilon_1+\mu_r}) -\sum_{r=n+1}^{2n} (-q^{-2})^r\mathbf{A}_n(e^{\rho_n+(k-r)\varepsilon_1+\mu_{2n-r}} ),\\ 
&= \sum_{r=0}^{2n} (-q^{-2})^r \tau_r \mathbf{A}_n(e^{\rho_n+ \lambda_k^r}),
\end{align*}
where the second equality follows by the same reasoning as in the type $\mathsf{B}_n$ case (namely, by splitting the alternating sum over coset representatives $R_n$ and $W_{n-1}$, and applying Lemma~\ref{lem: altiden}). Combing with \propref{prop: Combid}, we obtain
\[\Delta_n {\bf Ch}\, G_{n,k}= q^{2n}\sum_{r=0}^{2n} (-q^{-2})^r \tau_r \mathbf{A}_n(e^{\rho_n+ \lambda_k^r}).\]
It remains to determine the indices $r$ for which $\mathbf{A}_n(e^{\rho_n+\lambda_k^r})\neq 0$; the case $r=n$ is excluded since $\tau_n=0$. The analysis below is similar to the type $\mathsf{B}_n$ case. 

{\bf Case 1}:  $0\leq r\leq k-1$. Then $k\geq 1$, and  $\lambda^r_k$ is a  dominant integral weight. By the Weyl character formula, 
\[ \mathbf{A}_n (e^{\lambda_k^r+ \rho_n})= \Delta_n \chi(\lambda_k^r), \quad 0\leq r\leq \min\{k-1,2n\}.  \]

{\bf Case 2}: $k\leq r\leq 2n$.   We analyse this case by looking  at the coordinates of  $\lambda_k^r+\rho_n$. For $1\leq i\leq n$, let $c_i$ denote the coefficient of $\varepsilon_i$  of $\lambda_k^r+\rho_n$. Then,  for $0\leq r\leq 2n$ ($r\neq n$), we have 
\begin{align*}
 &c_1 = n+k-r, \\ 
 &S_1:=\{c_2,c_3 \dots ,c_{\bar{r}+1}\}=\{n, n-1,  \dots, n-\bar{r}+1\}, \\ 
 &S_2:=\{c_{\bar{r}+2}, \dots, c_n\}= \{n-\bar{r}-1,n-\bar{r}-2, \dots, 1\},  
\end{align*}
where $\bar{r}= \min\{r,2n-r,n-1\}$. 
 Conventionally, we set $S_1:=\emptyset$ when $\bar{r}=0$ and $S_2:=\emptyset$ when $\bar{r}=n-1$. For $0\leq r\leq 2n\ (r\neq n)$, we note that $S_1\cup S_2$ covers every integer from $1$ up to $n$ except for the missing number $n-\bar{r}$. On the other hand, since $k\leq r\leq 2n$, we have  $c_1\in \{0, \pm 1, \pm 2, \dots, \pm n\}$, so exactly one of the following occurs:
\begin{enumerate}
\item [(C1)] If $c_1=0$, then we have $(\lambda_k^r+\rho_n, 2\varepsilon_1)=0$, and hence by \lemref{lem: altvanish},  $\mathbf{A}_n(e^{\lambda_k^r+ \rho_n})=0$. 
\item [(C2)] If $\pm c_1= c_i$ for some $c_i\in S_1\cup S_2$ , then $(\lambda_k^r+ \rho_n, \varepsilon_1 \mp \varepsilon_i)=0$. It then follows from \lemref{lem: altvanish} that $\mathbf{A}_n(e^{\lambda_k^r+ \rho_n})=0$.
\item [ (C3)] If $\pm c_1=n-\bar{r}$ (the missing number), then  $\rho_n+\lambda_k^r$ is equal to $\rho_n$ up to a permutation  of coordinates in $W_n$. This permutation is obtained similarly as in type $\mathsf{B}$. Thus,  we have
\[ \mathbf{A}_n(e^{\lambda_k^r+ \rho_n})=\pm (-1)^{\bar{r}}\mathbf{A}_n(e^{\rho_n}). \]
\end{enumerate}
Therefore, $\mathbf{A}_n(e^{\lambda_k^r+ \rho_n})$ is nonzero precisely when (C3) holds. We now have two  subcases according to  the value of $k$. 

{\bf Subcase 2.1:} $k=0$, so we have $c_1= n-r$. If  $0\leq r\leq n-1$, then  $\bar{r} = r$, and $c_1 = n-\bar{r}$, whence (C3) is satisfied. If $n< r\leq 2n$, then  $\bar{r} = 2n-r$, and $-c_1 = n-\bar{r}$, whence (C3) is again satisfied. Therefore, we have 
\[  \mathbf{A}_n(e^{\lambda_k^r+ \rho_n})=
\begin{cases}
(-1)^r \mathbf{A}_n(e^{\rho_n}), \quad &0\leq r\leq n-1, \\ 
(-1)^{r+1} \mathbf{A}_n(e^{\rho_n}), \quad &n+1\leq r\leq 2n. 
\end{cases} \]

{\bf Subcase 2.2:} $k\geq 1$. If $ r\leq n-1$, then $\bar{r}=r$. In this case, (C3) cannot hold, since $\pm c_1=n-\bar{r}$  would  contradict the assumption $k\geq 1$. Thus, we have
\[
\mathbf{A}_n(e^{\lambda_k^r+ \rho_n})=0, \quad 1\leq k\leq r\leq n-1.
\]
If $n< r\leq 2n$, then $\bar{r} = 2n-r$.  The condition $-c_1=n-\bar{r}$ leads to $k=0$, which contradicts the assumption $k\geq 1$. The condition $c_1=n-\bar{r}$ leads to $r=n+\frac{k}{2}$, which forces $k$ to be an even integer.

 Thus, for $1\leq k\leq r\leq 2n \ (r\neq n)$, (C3) holds if and only if $k$ is even and $r=n+\frac{k}{2}$. In this case, we have 
\[ \mathbf{A}_n(e^{\lambda_k^r+ \rho_n})= (-1)^{\bar{r}}\mathbf{A}_n(e^{\rho_n})= (-1)^{n-\frac{k}{2}}\mathbf{A}_n(e^{\rho_n}). \]

Summarising the above case analysis according to the range of $k$, and applying the Weyl denominator formula $\Delta_n = \mathbf{A}_n(e^{\rho_n})$ and the Weyl character formula $\chi(\lambda_k^r)= \mathbf{A}_n  (e^{\lambda_k^r + \rho_n})/ \Delta_n$, we obtain the three stated expressions for $\mathbf{Ch}\,G_{n,k}$ in terms of characters. 
\end{proof}

\subsubsection{Type $\mathsf{D}_n$}
Finally, we follow the same approach to address type $\mathsf{D}_n\,(n\geq 4)$. For $0 \leq r \leq 2n-2$, we define $\bar{r}$ and the associated weights $\mu_{\bar{r}}$ as follows:
\begin{align*}
\text{Type } \mathsf{D}_n: \quad
&\bar{r} := \min\{r, 2n - 2 - r\} \in \{0,1,\dots,n-1\}, \\ 
&\mu_0 := 0, \qquad
\mu_{\bar{r}} := \varepsilon_2 + \cdots + \varepsilon_{\bar{r}+1}, \quad 1 \leq \bar{r} \leq n-1,\\ 
&\bar{\mu}_{n-1} := \varepsilon_2 + \cdots + \varepsilon_{n-1} - \varepsilon_{n}. 
\end{align*}
For $k \geq 0$, we define
\begin{align*}
&\lambda_k^r := (k-r)\varepsilon_1 + \mu_{\bar{r}}, \qquad 0 \leq r \leq 2n-2,\\ 
 &\bar{\lambda}_k^{n-1} := (k-n+1)\varepsilon_1 + \bar{\mu}_{n-1}.
\end{align*}
If $k-r \geq 1$, then $\lambda_k^r$ is a dominant integral weight of type $\mathsf{D}_n$ (hook shape in \figref{fig: youngdia}). Moreover, for $r=n-1$ and $k\ge n$, the weight $\bar\lambda_k^{\,n-1}$ is also dominant integral.

\begin{theorem}\label{thm: ChGtypeD}
Maintain notation as above.  
In type $\mathsf{D}_n (n \geq 4)$, for all integers $k \geq 0$, we have
\[
\mathbf{Ch}\,G_{n,k} =
\begin{cases}
1 + \displaystyle\sum_{r=0}^{2n-2} q^{2n-2-2r}, 
& k = 0, \\

q^{-k} + \displaystyle\sum_{r=0}^{k-1} (-1)^r q^{2n-2-2r}\,
   \Big(\chi(\lambda_k^r) + \delta_{r,n-1}\, \chi(\bar{\lambda}_k^r)\Big), 
& 1 \leq k \leq 2n-2 \ \text{and $k$ even}, \\

\displaystyle\sum_{r=0}^{\min\{k-1,\,2n-2\}} (-1)^r q^{2n-2-2r}\,
   \Big(\chi(\lambda_k^r) + \delta_{r,n-1}\, \chi(\bar{\lambda}_k^r)\Big), 
& \text{otherwise},
\end{cases}
\]
where $\chi(\lambda_k^r)$ denotes the character of the simple module $L(\lambda_k^r)$ of highest weight $\lambda_k^r$, and $\chi(\bar{\lambda}_k^{\,n-1})$ the character of $L(\bar{\lambda}_k^{\,n-1})$.
\end{theorem}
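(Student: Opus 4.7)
The plan is to follow the blueprint of Theorems \ref{thm: ChGtypeB} and \ref{thm: ChGtypeC}: reduce the computation to an alternating sum via \propref{prop: Combid}, expand $H_{n,k}$ using the generating function for exterior powers of the natural module of $\mathfrak{so}_{2n-2}$, and convert the resulting sums into irreducible characters via the Weyl character formula. Specifically, since $c_n=2n-1$ for type $\mathsf{D}_n$, \propref{prop: Combid} gives $\Delta_n \mathbf{Ch}\,G_{n,k} = q^{2n-2}\mathbf{A}_n(H_{n,k})$. I would apply part (2) of \lemref{lem: genfun} with $t=-q^{-2}e^{-\varepsilon_1}$ to the subalgebra $\mathfrak{so}_{2n-2}$ (with natural module $U$ of weights $\varepsilon_{\pm 2},\dots,\varepsilon_{\pm n}$) to expand $H_{n,k}$ over exterior powers $\bigw^r U$. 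The essential new feature relative to types $\mathsf{B}_n$ and $\mathsf{C}_n$ is that by part (3) of \lemref{lem: extpower}, $\bigw^{n-1}U \cong L(\mu_{n-1}) \oplus L(\bar{\mu}_{n-1})$ splits into two simple summands, so the $r=n-1$ term contributes both a $\lambda_k^{n-1}$-piece and a $\bar{\lambda}_k^{n-1}$-piece.  Splitting the alternating sum along the cosets $R_n$ and $W_{n-1}$ and applying \lemref{lem: altiden}, I would arrive at
\[
\Delta_n \mathbf{Ch}\,G_{n,k}
= \sum_{r=0}^{2n-2}(-1)^r q^{2n-2-2r}\mathbf{A}_n(e^{\lambda_k^r+\rho_n})
 + (-1)^{n-1}\mathbf{A}_n(e^{\bar{\lambda}_k^{n-1}+\rho_n}).
\]

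The bulk of the work lies in evaluating these alternating sums. For $r\le k-1$ the weight $\lambda_k^r$ (respectively $\bar{\lambda}_k^{n-1}$ when $k\ge n$) is dominant integral, and the Weyl character formula gives $\Delta_n\chi(\lambda_k^r)$ (respectively $\Delta_n\chi(\bar{\lambda}_k^{n-1})$). For the non-dominant range $r\ge k$, I would examine the coordinates $(c_1,\dots,c_n)$ of $\lambda_k^r+\rho_n$: for $r\neq n-1$ the set $\{c_i:i\ge 2\}$ covers $\{0,1,\dots,n-1\}\setminus\{n-\bar{r}-1\}$ with $c_n=0$, while for $r=n-1$ it equals $\{1,\dots,n-1\}$ with $c_n=\pm 1$. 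By \lemref{lem: altvanish}, non-vanishing of $\mathbf{A}_n$ forces $|c_1|$ to coincide with the ``missing'' value and to avoid coincidence with $c_n$.

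The main obstacle is the sign analysis: unlike in types $\mathsf{B}_n$ and $\mathsf{C}_n$, the Weyl group $W_{\mathsf{D}_n}$ does not contain the single-coordinate reflections $s_{\varepsilon_i}$, so sign flips must be paired through elements such as $s_{\varepsilon_1-\varepsilon_n}s_{\varepsilon_1+\varepsilon_n}$. Fortunately, in every relevant case a zero coordinate is present ($c_n=0$ when $r\neq n-1$, or $c_1=0$ when $k=0$ and $r=n-1$), so the paired flip acts effectively on a single nonzero coordinate with sign $+1$.  A careful enumeration then isolates exactly three non-vanishing non-dominant subcases: (i) $k=0$ and $r\neq n-1$, yielding sign $(-1)^r$; (ii) $k=0$ and $r=n-1$, where both $\lambda_0^{n-1}$ and $\bar{\lambda}_0^{n-1}$ contribute sign $(-1)^{n-1}$, producing a combined $+2\Delta_n$; and (iii) $k$ even with $2\le k\le 2n-2$ and $r=n-1+k/2$, whose contribution simplifies to $q^{-k}\Delta_n$ after the signs cancel. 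Assembling these with the dominant contributions and dividing by $\Delta_n$ produces the three stated formulae, corresponding respectively to $k=0$, $1\le k\le 2n-2$ with $k$ even, and the remaining ``otherwise'' cases.
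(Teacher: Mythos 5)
Your proposal is correct and follows essentially the same route as the paper's proof: the same reduction via \propref{prop: Combid}, the same expansion of $H_{n,k}$ over exterior powers of the natural $\mathfrak{so}_{2n-2}$-module with the $\bigw^{n-1}U$ splitting producing the $\bar{\lambda}_k^{n-1}$ term, and the same coordinate/sign case analysis, with your enumeration of the three non-vanishing non-dominant subcases matching the paper's exactly. The only (harmless) stylistic difference is that you realise the needed sign change by the sign-$+1$ double flip $s_{\varepsilon_1-\varepsilon_n}s_{\varepsilon_1+\varepsilon_n}$ acting on a pair containing a zero coordinate, whereas the paper uses the single reflection $s_{\varepsilon_1+\varepsilon_n}$ (sign $-1$) followed by extra adjacent transpositions; both yield the same total signs.
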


\begin{proof}
The beginning of the proof is similar to that of \thmref{thm: ChGtypeB}. We outline main steps below.  Let $U$ be the natural module of $\mathfrak{so}_{2n-2}$ whose  weights are  $\varepsilon_{\pm i}, 2\leq i\leq n$. We first rewrite the expression for $H_{n,k}$ of type $\mathsf{D}_n$ using the generating function from \lemref{lem: genfun} with $t = -q^{-2}e^{-\varepsilon_1}$:
\begin{align*}
\mathbf{A}_{n}(H_{n,k}) &= \mathbf{A}_n \Big(e^{\rho_n+k \varepsilon_1} \prod_{i=2}^n (1+t e^{\varepsilon_i})(1+t e^{-\varepsilon_i})\Big) = \sum_{r=0}^{2n-2}\mathbf{A}_n \Big(e^{\rho_n+k \varepsilon_1}  {\bf Ch}(\bigw^r U)t^r  \Big) \\
&=  \sum_{r=0}^{2n-2} (-q^{-2})^r \mathbf{A}_n \Big(e^{\rho_n+(k-r) \varepsilon_1}  {\bf Ch}(\bigw^r U)\Big). 
\end{align*}
By part (3) of \lemref{lem: extpower}, we have
\begin{align*}
&\bigw^rU\cong  \bigw^{2n-2-r}U \cong L(\mu_{\bar{r}}), \quad  \text{$0\leq r\leq 2n-2$ with $r\neq n-1$},  \\ 
&\bigw^{n-1}U \cong  L(\mu_{n-1})\oplus L(\bar{\mu}_{n-1}).
\end{align*}
Substituting the above isomorphisms back into the previous sum, we obtain
\begin{align*}
   \mathbf{A}_{n}(H_{n,k})=&\sum_{r=0}^{2n-2} (-q^{-2})^r \mathbf{A}_n\Big(e^{\rho_n+(k-r) \varepsilon_1}\big(\chi(\mu_{\bar{r}})+ \delta_{r,n-1}\chi(\bar{\mu}_{n-1})\big) \Big) \\ 
   =& \sum_{r=0}^{2n-2} (-q^{-2})^r \mathbf{A}_n(e^{\lambda_k^r+\rho_n}+ \delta_{r,n-1}e^{\bar{\lambda}_k^r+\rho_n})  \\  
\end{align*}
Hence, in view of \propref{prop: Combid}, we have 
\[
\Delta_n {\bf Ch}\, G_{n,k}= q^{2n-2}\sum_{r=0}^{2n-2} (-q^{-2})^r \mathbf{A}_n(e^{\lambda_k^r+\rho_n}+ \delta_{r,n-1}e^{\bar{\lambda}_k^r+\rho_n}).\]
It remains to determine the indices $r$ for which $\mathbf{A}_n(e^{\lambda_k^r+\rho_n})\neq 0$ and $\mathbf{A}_n(e^{\bar{\lambda}_k^{n-1}+\rho_n})\neq 0$. 

We first consider the weight $\bar{\lambda}_k^{n-1}$ in the case $r=n-1$. By definition, $\bar{\lambda}_k^{n-1}$ is  a dominant integral weight whenever $k\ge n$. Hence, by the Weyl character formula, 
\[ \mathbf{A}_n(e^{\bar{\lambda}_k^{n-1}+\rho_n})= \Delta_n\chi(\bar{\lambda}_k^{n-1}), \quad \text{for all } k\geq n. \]
 For $k\leq n-1$,  the coordinates of $\bar{\lambda}_k^{n-1}+\rho_n$ with respect to the basis $\{\varepsilon_i\}_{i=1}^n$ are  
\[\bar{\lambda}_k^{n-1}+\rho_n= (k, n-1, n-2, \cdots, 2, -1).   \]
Thus, if $2\leq k< n$, we have $(\bar{\lambda}_k^{n-1}+\rho_n, \varepsilon_1- \varepsilon_{n-k+1})=0$, and  by \lemref{lem: altvanish} it follows that $\mathbf{A}_n(e^{\bar{\lambda}_k^{n-1}+\rho_n})=0$. For $k=1$, we have  $(\bar{\lambda}_1^{n-1}+\rho_n, \varepsilon_1+\varepsilon_{n})=0$, so again $\mathbf{A}_n(e^{\bar{\lambda}_1^{n-1}+\rho_n})=0$ by \lemref{lem: altvanish}. Finally,  if $k=0$,  we have
$\bar{\lambda}_0^{n-1}+\rho_n=(0, n-1,n-2,\dots,2,-1)$.
Applying the reflection $s_{\varepsilon_1+\varepsilon_n}$, this transforms to $(1,n-1,n-2,\dots,2,0)$.
Successively applying the reflections $s_{\varepsilon_i-\varepsilon_{i+1}}$ for $1\leq i\leq n-2$ then sends this weight to $\rho_n$. Since each reflection contributes a factor of $-1$ to the sign character, we conclude that
\[
    \mathbf{A}_n(e^{\bar{\lambda}_0^{n-1}+\rho_n})=(-1)^{n-1}\mathbf{A}_n(e^{\rho_n}), \quad k=0 \text{ and } r=n-1. 
\]

The remaining analysis for $\lambda_k^r$ for $0\leq r\leq 2n-2$ and $k\geq 0$  proceeds similarly as in type $\mathsf{B}_n$. The difference in Weyl groups introduces subtleties, so we include a complete account of the argument below.

{\bf Case 1}: $0\leq r\leq k-1$. Then $k\geq 1$, and  $\lambda^r_k$ is a  dominant integral weight. By the Weyl character formula, 
\[ \mathbf{A}_n (e^{\lambda_k^r+ \rho_n})= \Delta_n \chi(\lambda_k^r), \quad 0\leq r\leq \min\{k-1, 2n-2\}.  \]

{\bf Case 2}: $k\leq r\leq 2n-2$.   We analyse this case by looking  at the coordinates of  $\lambda_k^r+\rho_n$. For $1\leq i\leq n$, let $c_i$ denote the coefficient of $\varepsilon_i$  of $\lambda_k^r+\rho_n$. Then,  for $0\leq r\leq 2n-2$, we have 
\begin{align*}
 &c_1 = n-1+k-r, \\ 
 &S_1:=\{c_2,c_3 \dots ,c_{\bar{r}+1}\}=\{n-1, n-2,  \dots, n-\bar{r}\}, \\ 
 &S_2:=\{c_{\bar{r}+2}, \dots, c_n\}= \{n-\bar{r}-2,n-\bar{r}-3, \dots, 0\}. 
 \end{align*}
where $\bar{r}= \min\{r,2n-2-r\}$. 
Conventionally, we set $S_1:=\emptyset$ when $\bar{r}=0$ and $S_2:=\emptyset$ when $\bar{r}=n-1$. For $0\leq r\leq 2n-2$, we note that $S_1\cup S_2$ covers every integer from $0$ up to $n-1$ except the missing number $n-\bar{r}-1$. On the other hand, since $k\leq r\leq 2n-2$, we have  $c_1\in \{0, \pm 1, \pm 2, \dots, \pm (n-1)\}$, so exactly one of the following occurs:
\begin{enumerate}
\item [(C1)] If $\pm c_1= c_i$ for some $c_i\in S_1\cup S_2$, then $(\lambda_k^r+ \rho_n, \varepsilon_1 \mp \varepsilon_i)=0$. It then follows from \lemref{lem: altvanish} that $\mathbf{A}_n(e^{\lambda_k^r+ \rho_n})=0$;
\item [ (C2)] If $\pm c_1=n-\bar{r}-1$ (the missing number), then  $\rho_n+\lambda_k^r$ is equal to $\rho_n$ up to a permutation  $\sigma\in W_n$ of coordinates. Thus, we have 
\[ \mathbf{A}_n(e^{\lambda_k^r+ \rho_n})=\mathbf{sgn}(\sigma)\mathbf{A}_n(e^{\rho_n}), \]
where $\sigma$ will be determined explicitly in the discussions below. 
\end{enumerate}
Therefore, $\mathbf{A}_n(e^{\lambda_k^r+ \rho_n})$ is nonzero precisely when (C2) holds. We now have two  subcases according to  the value of $k$. 

{\bf Subcase 2.1:} $k=0$, so we have $c_1= n-1-r$. If  $0\leq r\leq n-1$, then  $\bar{r} = r$, and $c_1 = n-1-\bar{r}$, whence (C2) is satisfied. In this case, the weight $\lambda_k^r+\rho_n$ differs from $\rho_n$ only by permuting the first $r+1$ coordinates. For $1\leq r\leq n-1$,  this permutation can be realised by successive simple reflections $s_{\varepsilon_i-\varepsilon_{i+1}}$, each contributing a factor of $-1$ to the sign character. Consequently, we have
\[\mathbf{A}_n(e^{\lambda_k^r+ \rho_n})=(-1)^{r}\mathbf{A}_n(e^{\rho_n}), \quad k=0 \text{\, and \,} 0\leq r\leq n-1.\]

If $n\leq  r\leq 2n-2$, then  $\bar{r} = 2n-2-r$, and $-c_1 = n-1-\bar{r}$, whence (C2) is again satisfied. In this situation, $\lambda_0^r+ \rho_n$ can be transformed to $\rho_n$ as follows. Apply $s_{\varepsilon_1+\varepsilon_n}$ to get
$(0,\; n-1,\; n-2,\;\dots,\; n-\bar r,\; n-\bar r-2,\;\dots,\; 1,\; n-\bar r-1)$; then use adjacent reflections $s_{\varepsilon_i-\varepsilon_{i+1}}$: move $0$ from the first to the last position (requiring $n-1$ reflections), and move $n-\bar r-1$ left across the block $(n-\bar r-2,\dots,1)$ (requiring $n-\bar r-2$ reflections). Since each reflection contributes a factor $-1$, we have 
\[\mathbf{A}_n (e^{\lambda^{r}_{0}+\rho_n})
= (-1)^{2n-\bar{r}-2}\mathbf{A}_n(e^{\rho_n})= (-1)^{r}\mathbf{A}_n(e^{\rho_n}),
\qquad k=0 \text{ and  } n\le r\le 2n-2.\] 

{\bf Subcase 2.2:} $k\geq 1$. If $ r\leq n-1$, then $\bar{r}=r$. In this case, (C2) cannot hold, since $\pm c_1=n-1-\bar{r}$  would force $k=0$,  contradicting the assumption $k\geq 1$. Thus, we have
\[
\mathbf{A}_n(e^{\lambda_k^r+ \rho_n})=0, \quad 1\leq k\leq r\leq n-1.
\]
If $n\leq  r\leq 2n-2$, then $\bar{r} = 2n-2-r$.  The condition $-c_1=n-1-\bar{r}$ leads to $k=0$, which contradicts the assumption $k\geq 1$. The condition $c_1=n-1-\bar{r}$ leads to $r=n-1+\frac{k}{2}$, which forces $k$ to be an even integer. In this situation, $\lambda_k^r+\rho_n$ is carried to $\rho_n$ by moving the first coordinate past exactly $\bar{r}$ entries via the adjacent simple reflections $s_{\varepsilon_i-\varepsilon_{i+1}}, 1\leq i\leq \bar{r}$; each contributes a factor $-1$.

 Thus, for $1\leq k\leq r\leq 2n-2$, (C2) holds if and only if $k$ is even and $r=n-1+\frac{k}{2}$. In this case, we have 
\[ \mathbf{A}_n(e^{\lambda_k^r+ \rho_n})= (-1)^{\bar{r}}\mathbf{A}_n(e^{\rho_n})= (-1)^{n-1-\frac{k}{2}}\mathbf{A}_n(e^{\rho_n}). \]

Summarising the above case analysis according to the range of $k$, and applying the Weyl denominator formula $\Delta_n = \mathbf{A}_n(e^{\rho_n})$ and the Weyl character formula $\chi(\lambda_k^r)= \mathbf{A}_n  (e^{\lambda_k^r + \rho_n})/ \Delta_n$, we obtain the three stated expressions for $\mathbf{Ch}\,G_{n,k}$ in terms of characters.
\end{proof}

\section{Explicit generators of the centre}\label{sec: centre}

As an application of the representation-theoretic results, we will give new explicit generators of the centre of the quantum groups of types $\mathsf{B}$, $\mathsf{C}$, and $\mathsf{D}$ in terms of higher-order Casimir elements.

\subsection{Main result}
We first collect the notation and results we will use in the statement of the main result. The proof of the main result will be postponed to \secref{sec: proof}.

For any $\ell\geq 1$, recall from \secref{sec: highCas} that $C^0_{n,\ell}=\varphi(C_{n,\ell})$ denotes the image of the higher-order Casimir element $C_{n,\ell}$ under the Harish-Chandra isomorphism $\varphi$. By \corref{coro: C^0_nl},  the element $C^0_{n,\ell}$ can be expressed as
\[
C^0_{n,\ell}= \frac{1}{(q^{-1}-q)^{\ell}} \sum_{k=0}^{\ell} \binom{\ell}{k} (-q^{1-c_n})^{k}G_{n,k} \in (\mathrm{U}_{ev}^0)^W. 
 \]
Thus, Theorems \ref{thm: ChGtypeB}, \ref{thm: ChGtypeC} and \ref{thm: ChGtypeD} together provide a representation-theoretic interpretation of $C^0_{n,\ell}$ for all $\ell\geq 1$ in types $\mathsf{B}$, $\mathsf{C}$, and $\mathsf{D}$. 

On the other hand, \thmref{thm: generators of Uev0} shows that the $W$-invariant algebra $(\mathrm{U}_{ev}^0)^W$ is a polynomial algebra generated by the quantum Casimir elements  $C^0_{\varpi_1}, C^0_{\varpi_2},\dots, C^0_{\varpi_n}$, where $C^0_{\varpi_i}= \varphi(C_{\varpi_i})$ is the Harish-Chandra image of the   quantum Casimir element $C_{\varpi_i}$ of order one associated to the fundamental weight $\varpi_i$ for $1\leq i\leq n$. 

It is natural to ask whether the generators $C^0_{\varpi_i}, 1\leq i\leq n,$ can be expressed in terms of the higher-order Casimir elements $C^0_{n,\ell}$. In that case, one would obtain an alternative generating set of $(\mathrm{U}_{ev}^0)^W$, and hence of the centre $\mathcal{Z}(\mathrm{U})$.
This has been proved for type $\mathsf{A}_n$ in \cite{Li10}, while our main result below shows that the same holds for type $\mathsf{C}_n (n \geq 3)$, but fails in types $\mathsf{B}_n (n \geq 2)$ and $\mathsf{D}_n (n \geq 4)$. 

\begin{theorem}\label{thm: CentreGen}
Let $\mathrm{U}_q(\mathfrak{g})$ be the quantum group associated to the simple Lie algebra $\mathfrak{g}$ of type $\mathsf{B}_n\,(n\geq 2), \mathsf{C}_n\,(n\geq 3)$ or $\mathsf{D}_n\,(n\geq 4)$. For $1\leq \ell \leq n$,  let $C_{n,\ell}$ be the quantum Casimir elements of order $\ell$ associated with the natural representation,  and for $1\leq i\leq n$,  let $C_{\varpi_i}$ be the quantum Casimir elements of order one associated with the $i$-th fundamental weight $\varpi_i$ of $\mathfrak{g}$.
\begin{enumerate}
\item In type $\mathsf{B}_n\,(n\geq 2)$, the centre of $\mathrm{U}_q(\mathfrak{g})$ is generated by the quantum Casimir elements 
\[C_{n,1},\, C_{n,2},\dots,C_{n,n-1},\,C_{\varpi_n}.\] 
\item In type $\mathsf{C}_n\,(n\geq 3)$, the centre of $\mathrm{U}_q(\mathfrak{g})$ is generated by the quantum Casimir elements 
\[C_{n,1},\, C_{n,2}, \dots, C_{n,n}.\]

\item In type $\mathsf{D}_n\,(n\geq 4)$, the centre of $\mathrm{U}_q(\mathfrak{g})$ is generated by the quantum Casimir elements 
\[C_{n,1},\,\dots, C_{n,n-2},\, C_{\varpi_{n-1}}, \, C_{\varpi_n}.\]
\end{enumerate}
\end{theorem}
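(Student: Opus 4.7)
The plan is to apply the Harish-Chandra isomorphism (\thmref{thm:HCiso}) to translate the centre-generation question into a question about generation of $(\mathrm{U}_{ev}^0)^W$. By \thmref{thm: generators of Uev0}, the latter is a polynomial algebra in $C^0_{\varpi_1},\ldots,C^0_{\varpi_n}$, so it suffices to express each $C^0_{\varpi_i}$ (equivalently each $\chi(\varpi_i)$ under $\mathbf{Ch}$) as a polynomial in the Harish-Chandra images of the claimed generators.

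First I would replace the $C^0_{n,\ell}$ with the auxiliary elements $G_{n,k}$: by \corref{coro: C^0_nl} the two families are related by a unipotent upper-triangular transition with non-vanishing diagonal, so they generate the same subalgebra of $(\mathrm{U}_{ev}^0)^W$. By Theorems~\ref{thm: ChGtypeB}, \ref{thm: ChGtypeC} and \ref{thm: ChGtypeD}, each element $\mathbf{Ch}\,G_{n,k}$ is a signed and $q$-twisted sum of irreducible characters $\chi(\lambda_k^r)$ indexed by hook partitions of total size $k$ (together with an additional $\chi(\bar\lambda_k^{\,n-1})$ term in type $\mathsf{D}$). Up to the $q$-weights and scalar corrections, this is a quantum analogue of the Newton identity $p_k = \sum_{r=0}^{k-1}(-1)^r s_{(k-r,\,1^r)}$ in the ring of symmetric functions, so the family $\{G_{n,k}\}$ should be regarded as power sums in the character ring of $\mathfrak{g}$.

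Next I would carry out the inversion. The summand with $r=k-1$ in $\mathbf{Ch}\,G_{n,k}$ is $\chi(\lambda_k^{k-1}) = \chi(\varepsilon_1+\cdots+\varepsilon_k)$, which equals $\chi(\varpi_k)$ in type $\mathsf{C}$ for $1\leq k\leq n$, in type $\mathsf{B}$ for $1\leq k\leq n-1$, and in type $\mathsf{D}$ for $1\leq k\leq n-2$. Invoking the dual Jacobi-Trudi identity for classical groups, the remaining hook characters $\chi(\lambda_k^r)$ with $0\leq r<k-1$ are expressible as polynomials in $\chi(\varpi_1),\ldots,\chi(\varpi_k)$ in a way whose leading coefficient on $\chi(\varpi_k)$ is controlled by the Schur--to--elementary transition. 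Combining with Newton's identities relating power sums to elementary symmetric functions, this produces a triangular inversion expressing $\chi(\varpi_k)$ as a polynomial in $G_{n,1},\ldots,G_{n,k}$, hence in $C^0_{n,1},\ldots,C^0_{n,k}$. Induction on $k$ delivers $C^0_{\varpi_1},\ldots,C^0_{\varpi_n}$ in type $\mathsf{C}$, and $C^0_{\varpi_1},\ldots,C^0_{\varpi_{n-1}}$ (resp.\ $C^0_{\varpi_1},\ldots,C^0_{\varpi_{n-2}}$) in type $\mathsf{B}$ (resp.\ $\mathsf{D}$). Since each proposed generating set contains exactly $n$ elements and $(\mathrm{U}_{ev}^0)^W$ has Krull dimension $n$, algebraic independence is automatic once generation is established.

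The main obstacle, and the reason the statements differ across types, lies in the spin and half-spin representations. In type $\mathsf{B}_n$ the fundamental weight $\varpi_n$, and in type $\mathsf{D}_n$ the weights $\varpi_{n-1},\varpi_n$, all have half-integer coordinates in the $\varepsilon_i$-basis; on the other hand every weight appearing in any summand of $\mathbf{Ch}\,G_{n,k}$ lies in the integral sublattice $\bigoplus_i\mathbb{Z}\varepsilon_i$. Consequently $C^0_{\varpi_n}$ (respectively $C^0_{\varpi_{n-1}}$ and $C^0_{\varpi_n}$) cannot belong to the subalgebra generated by the $C^0_{n,\ell}$, and must be adjoined as separate generators; verifying this weight-lattice exclusion rigorously and ruling out any unexpected cancellation in the quantum formulas is the subtlest step of the argument. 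Once it is checked, the enlarged generating sets span $(\mathrm{U}_{ev}^0)^W$, and pulling back along $\varphi^{-1}$ yields the asserted generators of $\mathcal{Z}(\mathrm{U}_q(\mathfrak{g}))$ in all three types.
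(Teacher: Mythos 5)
Your proposal is correct and follows essentially the same route as the paper: reduce via the Harish--Chandra isomorphism to the character ring, pass from the $C^0_{n,\ell}$ to the $G_{n,k}$ by the triangular transition of \corref{coro: C^0_nl}, expand $\mathbf{Ch}\,G_{n,k}$ into alternating sums of hook characters, invert via the Jacobi--Trudi identities to recover the elementary symmetric characters $e_1,\dots,e_k$, and adjoin the (half-)spin Casimirs in types $\mathsf{B}$ and $\mathsf{D}$ because their weights leave the integral sublattice $\bigoplus_i\mathbb{Z}\varepsilon_i$. The only point to tighten is the non-vanishing of the aggregate coefficient of $e_k$ after summing over all hooks $r=0,\dots,k-1$ (not just the $r=k-1$ term): the paper computes it explicitly as $(-1)^{k+1}\sum_{r=0}^{k-1}q^{2n-2r-1}$ (type $\mathsf{B}$), nonzero over $\mathbb{K}=\mathbb{C}(\hat q)$ since the powers of $q$ are distinct.
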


\begin{remark}
In type $\mathsf{B}_n\,(n\geq 2)$, the fundamental weight $\varpi_n= \frac{1}{2}(\varepsilon_1+ \cdots +\varepsilon_n)$ is the highest weight of the spin representation. In type $D_{n} (n\geq 4)$, there are two half-spin representations with highest weights $\varpi_{n-1}=\frac{1}{2}(\varepsilon_1+ \dots + \varepsilon_{n-1}- \varepsilon_n)$ and $\varpi_{n}=\frac{1}{2}(\varepsilon_1+ \dots + \varepsilon_{n-1}+\varepsilon_n)$.   
\end{remark}

\subsection{Character ring}

 Recall that the weight lattice $P$ of $\mathfrak{g}$ is a free abelian group generated by the fundamental weights $\varpi_1,\varpi_2,\dots,\varpi_n$. Denote by $\mathbb{K}[P]$ the group algebra of $P$ over $\mathbb{K}$, i.e., the Laurent polynomial algebra in the formal exponentials $e^{\lambda}$ with $\lambda\in P$. The Weyl group $W$ of $\mathfrak{g}$ acts naturally on $\mathbb{K}[P]$ by $w(e^{\lambda})=e^{w(\lambda)}$ for all $w\in W$ and $\lambda\in P$. The invariant subalgebra $\mathbb{K}[P]^W$ is called the \emph{character ring} of $\mathfrak{g}$ over $\mathbb{K}$.  

Let $2P\subset P$ denote the subgroup of even weights, i.e., all elements of the form  $2\lambda$ with $\lambda\in P$. Since $P\cong 2P$ as free abelian groups, the group algebras  $\mathbb{K}[P]$ and $\mathbb{K}[2P]$ are isomorphic as associative algebras via  $e^{\lambda}\mapsto e^{2\lambda}$ for $\lambda\in P$. Moreover, the subalgebra $\mathrm{U}_{ev}^0$ is naturally isomorphic to $\mathbb{K}[2P]$ through the correspondence $K_{2\lambda}\mapsto e^{2\lambda}$ for all $\lambda\in P$. All of these  algebra isomorphisms are $W$-equivariant, and they fit into the following commutative diagram, which identifies  the centre $\mathcal{Z}(\mathrm{U})$ with  the character ring $\mathbb{K}[P]^W$.
\[
\xymatrix{
\mathcal{Z}(\mathrm{U}) \ar[r]^{\varphi} &
(\mathrm{U}^0_{ev})^{W} \ar[r]^{\cong} \ar[dr]_{\cong} &
\mathbb{K}[2P]^{W} \ar[d]^{\cong} \\
&& \mathbb{K}[P]^{W}
}
\]

\begin{proposition}\cite[\S 23.2]{FH13}\label{prop: charring}
  Let $\mathfrak{g}$ be a simple Lie algebra of type $\mathsf{B}_n\,(n\geq 2), \mathsf{C}_n\,(n\geq 3)$ or $\mathsf{D}_n\,(n\geq 4)$. The character ring $\mathbb{K}[P]^W$ is a polynomial algebra generated by the characters $\chi(\varpi_1), \chi(\varpi_2), \dots, \chi(\varpi_n)$ of the fundamental representations of $\mathfrak{g}$. 
\end{proposition}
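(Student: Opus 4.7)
The plan is to show that $\chi(\varpi_1),\dots,\chi(\varpi_n)$ generate $\mathbb{K}[P]^W$ as an algebra and are algebraically independent over $\mathbb{K}$. To set up, I would recall that $\mathbb{K}[P]^W$ has two natural $\mathbb{K}$-bases indexed by $P^+$: the orbit sums $m_\lambda = \sum_{\mu\in W\lambda} e^\mu$ and the irreducible characters $\chi(\lambda)$. The Weyl character formula gives $\chi(\lambda) = m_\lambda + \sum_{\mu\prec\lambda} n_{\lambda\mu}\, m_\mu$ with $n_{\lambda\mu}\in\mathbb{Z}_{\geq 0}$, so the transition matrix is unitriangular with respect to the dominance order $\preceq$ on $P^+$, confirming that the characters form a $\mathbb{K}$-basis.

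The generation step is a triangularity argument on the dominance order. For any $\lambda = \sum_{i=1}^n c_i\varpi_i \in P^+$, classical highest-weight theory shows that the tensor product $L(\varpi_1)^{\otimes c_1}\otimes\cdots\otimes L(\varpi_n)^{\otimes c_n}$ contains $L(\lambda)$ with multiplicity one as its unique irreducible summand of highest weight $\lambda$, while every other irreducible constituent $L(\mu)$ satisfies $\mu \prec \lambda$. Translated into characters, this reads
\begin{equation*}
\chi(\varpi_1)^{c_1}\cdots\chi(\varpi_n)^{c_n} = \chi(\lambda) + \sum_{\mu\prec\lambda} a_\mu\,\chi(\mu), \qquad a_\mu\in\mathbb{Z}_{\geq 0}.
\end{equation*}
Since the set $\{\mu\in P^+:\mu\preceq\lambda\}$ is finite, induction on $\preceq$ rearranges this identity to express every basis element $\chi(\lambda)$ as a polynomial in $\chi(\varpi_1),\dots,\chi(\varpi_n)$, so these fundamental characters generate $\mathbb{K}[P]^W$.

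Finally, I would obtain algebraic independence from a transcendence-degree count: $\mathbb{K}[P]$ is a finite integral extension of $\mathbb{K}[P]^W$ (since $W$ is finite, each $a\in\mathbb{K}[P]$ satisfies the monic polynomial $\prod_{w\in W}(t-w(a))\in\mathbb{K}[P]^W[t]$), so both rings share the transcendence degree $n=\mathrm{rank}(P)$ over $\mathbb{K}$. A generating set of cardinality $n$ in a $\mathbb{K}$-algebra domain of transcendence degree $n$ must be algebraically independent, yielding $\mathbb{K}[P]^W\cong\mathbb{K}[\chi(\varpi_1),\dots,\chi(\varpi_n)]$. The main obstacle is the triangular expansion of products of fundamental characters: it rests on the Lie-theoretic fact that $\bigotimes_i L(\varpi_i)^{\otimes c_i}$ carries, up to scalar, a unique highest-weight vector of weight $\lambda=\sum_i c_i\varpi_i$. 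Once this input is granted, the argument is uniform across types $\mathsf{B}_n$, $\mathsf{C}_n$, and $\mathsf{D}_n$ (indeed for all simple Lie algebras).
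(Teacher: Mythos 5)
Your argument is correct and is essentially the standard proof given in the cited reference [FH13, \S 23.2], which the paper invokes without reproducing: unitriangularity of $\chi(\varpi_1)^{c_1}\cdots\chi(\varpi_n)^{c_n}$ against the character basis with respect to dominance order gives generation, and the transcendence-degree count (using that $\mathbb{K}[P]$ is integral over $\mathbb{K}[P]^W$) gives algebraic independence. No gaps; this matches the intended justification.
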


For $\mathfrak{g}$ of type $\mathsf{B}_n\,(n\geq 2), \mathsf{C}_n\,(n\geq 3)$ or $\mathsf{D}_n\,(n\geq 4)$, let
\[e_r:=\mathbf{Ch}(\bigw^rV), \quad 0\leq r\leq d:=\mathrm{dim}(V).\]
 denote the character of the $r$-th exterior power of the natural $\mathfrak{g}$-module $V$. By convention, $e_r=0$ for $r<0$. For $r>n$, we have $e_{r}=e_{d-r}$. These elementary symmetric polynomials are encoded in the generating function from \lemref{lem: genfun}. It follows from \lemref{lem: extpower} that 
\begin{align*}
 &\text{Type $\mathsf{B}_n\,(n\geq 2)$:} \quad  \chi(\varpi_r)= e_r, \quad 1\leq r\leq n-1,\\ 
  &\text{Type $\mathsf{C}_n\,(n\geq 3)$:} \quad  \chi(\varpi_1)=e_1,\quad  \chi(\varpi_2)=e_2-1,\quad   \chi(\varpi_r)= e_r-e_{r-2}, \quad 3\leq r\leq n,\\
 &\text{Type $\mathsf{D}_n\,(n\geq 4)$:} \quad   \chi(\varpi_r)= e_r, \quad 1\leq r\leq n-2.
\end{align*}
In particular, for type $\mathsf{C}_n\,(n\geq 3)$, the character ring $\mathbb{K}[P]^W$ is generated by the elementary symmetric polynomials $e_r$ for $1\leq r\leq n$. For types $\mathsf{B}_n\,(n\geq 2)$ and $\mathsf{D}_n\,(n\geq 4)$,  spin and half-spin characters cannot be expressed in terms of the corresponding  elementary symmetric polynomials \cite[\S 23.2]{FH13}.

More generally, the irreducible characters $\chi(\lambda)$ of $\mathfrak{g}$ indexed by partitions $\lambda$ can be expressed in terms of the elementary symmetric polynomials $e_r$ via the Jacobi-Trudi-type formulae \cite{FH13}, which we now recall. A \emph{partition} is a finite non-increasing sequence of positive integers $\lambda=(\lambda_1,\lambda_2,\dots,\lambda_k)$.  The Young diagram of shape $\lambda$ is an array of left-justified boxes with $\lambda_i$ boxes in the $i$-th row. The \emph{conjugate partition} $\lambda'=(\lambda'_1,\lambda'_2,\dots,\lambda'_{\ell})$ is defined by letting $\lambda'_j$ be the number of boxes in the $j$-th column of the Young diagram of shape $\lambda$. A partition $(\lambda_1,\lambda_2,\dots,\lambda_k)$ with $k\leq n$ determines  a dominant integral weight  $\lambda=\sum_{i=1}^k \lambda_i \varepsilon_i$ of $\mathfrak{g}$.

The following is the well known Jacobi-Trudi identity in terms of elementary symmetric polynomials for  types $\mathsf{B}, \mathsf{C}$ and $\mathsf{D}$ \cite{Kin71} (see also \cite[\S 24.2]{FH13}). 

\begin{proposition} \cite{Kin71}
 Let $\lambda=(\lambda_1, \dots, \lambda_k)$ be a partition with $k\leq n$, and let $\lambda'= (\lambda'_1, \dots, \lambda'_{\ell})$ be the conjugate partition to $\lambda$, where $\ell=\lambda_1$. Let $\mathrm{det}(m_{ij})_{1\leq i,j\leq \ell}$ denotes the determinant of the matrix $(m_{ij})_{1\leq i,j\leq \ell}$.
 \begin{enumerate}
  \item In type $\mathsf{B}_n\,(n\geq 2)$ or $\mathsf{D}_n\, (n\geq 4)$, we have 
  \[ \chi(\lambda)= \frac{1}{2} \mathrm{det}(e_{\lambda'_i-i+j}+ e_{\lambda'_i-i-j+2})_{1\leq i,j\leq \ell}  \]
  \item In type $\mathsf{C}_n\,(n\geq 3)$, we have 
  \[\chi(\lambda)= \mathrm{det}(e_{\lambda'_i-i+j}- e_{\lambda'_i-i-j})_{1\leq i, j\leq \ell}. \]
  \end{enumerate} 
\end{proposition}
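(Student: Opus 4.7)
The plan is to derive King's identities by combining the Weyl character formula with a Cauchy--Binet-type identity that introduces the elementary symmetric polynomials $e_r=\mathbf{Ch}(\bigwedge^r V)$. Setting $x_i = e^{\varepsilon_i}$ and recalling that the Weyl group $W$ acts as (a subgroup of) the signed-permutation group on $\{x_1^{\pm1},\dots,x_n^{\pm1}\}$, one first rewrites $\mathbf{A}(e^\mu)$ for $\mu \in P$ as a ``bialternant'' of size $n \times n$ whose $(i,j)$-entry is $x_i^{\mu_j} - x_i^{-\mu_j}$ in types $\mathsf{B}_n, \mathsf{C}_n$ and $x_i^{\mu_j} + x_i^{-\mu_j}$ in type $\mathsf{D}_n$ (with the usual caveat about the half-integer coordinates of $\rho_{\mathsf{B}_n}$). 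This expresses $\chi(\lambda)$ as a ratio of two such determinants, one at $\mu = \lambda + \rho$ and one at $\mu = \rho$.

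The key technical step is the Cauchy--Binet manoeuvre that converts this ratio of $n \times n$ bialternants into a single $\ell \times \ell$ determinant in the $e_r$'s, where $\ell = \lambda_1$ is the length of the conjugate partition $\lambda'$. The mechanism is the generating function from \lemref{lem: genfun},
\[
\sum_{r\ge 0} e_r\, t^r \;=\; (1+t)^{\epsilon}\prod_{i=1}^n (1+tx_i)(1+tx_i^{-1}),
\]
with $\epsilon=1$ in type $\mathsf{B}_n$ and $\epsilon=0$ in types $\mathsf{C}_n,\mathsf{D}_n$. Expanding each entry $x_i^{\mu_j}\pm x_i^{-\mu_j}$ of the numerator bialternant in terms of $e_r(x)$'s multiplied by monomials in the $x_i$'s, and applying Cauchy--Binet, one factors the numerator bialternant as the denominator bialternant times a determinant whose entries are the ``folded'' elementary symmetric polynomials $e_{\lambda'_i - i + j} \pm e_{\lambda'_i - i - j + c}$, with $c=0$ for type $\mathsf{C}$ and $c=2$ for types $\mathsf{B}, \mathsf{D}$. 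The conjugate shape $\lambda'$ appears naturally because Cauchy--Binet interchanges the row/column indices of the original bialternant with indices of the $e_r$'s. An equivalent route is the Lindstr\"om--Gessel--Viennot lemma, applied to families of non-intersecting lattice paths whose weighted enumeration reproduces $\chi_{\mathsf{B}/\mathsf{C}/\mathsf{D}}(\lambda)$: a single-path weight is an $e_r$, non-intersection produces the Jacobi--Trudi determinant, and allowed reflections of path endpoints at the boundary wall account for the type-specific $\pm$ corrections.

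The main obstacle is type $\mathsf{D}_n$, where $W_{\mathsf{D}_n}$ has index two in the hyperoctahedral group, so $\mathbf{A}(e^\mu)$ becomes a symmetric (rather than antisymmetric) combination in the ``missing'' reflection $\varepsilon_n \mapsto -\varepsilon_n$; this is precisely what produces the factor $\tfrac12$ and the $+$ in the determinant entries. My plan is to first establish the type $\mathsf{B}_n$ formula (cleanest because $\rho_{\mathsf{B}_n}$ has half-integer coordinates, so all relevant exponents are pairwise distinct and the bialternant manipulation is unobstructed) and then to deduce the type $\mathsf{D}_n$ formula by a branching argument along $\mathfrak{so}_{2n} \hookrightarrow \mathfrak{so}_{2n+1}$; the $(1+t)$ factor in the type $\mathsf{B}$ generating function is exactly what shifts the second subscript by $+2$ and contributes the $\tfrac12$ prefactor under restriction. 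Type $\mathsf{C}_n$ is the easiest case: $W_{\mathsf{C}_n}=W_{\mathsf{B}_n}$ but $\rho_{\mathsf{C}_n}$ has integer coordinates, so the bialternant is a clean signed determinant and the Cauchy--Binet step delivers the formula with $e_{\lambda'_i - i + j} - e_{\lambda'_i - i - j}$ directly without any $\tfrac12$ correction.
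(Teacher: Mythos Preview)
The paper does not prove this proposition: it is simply quoted from King \cite{Kin71} (with a pointer to \cite[\S 24.2]{FH13}) and then applied as a black box to derive \corref{coro: hookform} for hook partitions. There is therefore no ``paper's own proof'' to compare against.

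Your outline is a recognisable version of the standard bialternant derivation of these identities (as in Fulton--Harris or Koike--Terada), but a couple of details are off. First, your plan to deduce the type~$\mathsf{D}$ formula from type~$\mathsf{B}$ via branching along $\mathfrak{so}_{2n}\hookrightarrow\mathfrak{so}_{2n+1}$ is non-standard and unlikely to go through cleanly: restriction from $\mathsf{B}_n$ to $\mathsf{D}_n$ does not send irreducibles to irreducibles in a simple enough way to read off the determinantal formula, and the three cases are normally proved in parallel from their respective bialternants. Second, the attribution of the $+2$ shift and the factor $\tfrac12$ to the extra $(1+t)$ in the type~$\mathsf{B}$ generating function is mistaken---types $\mathsf{B}$ and $\mathsf{D}$ share the \emph{same} determinantal formula, yet only type~$\mathsf{B}$ has that factor; the $\tfrac12$ and the $+2$ come instead from the column operation that combines the two terms $x_i^{\mu_j}$ and $x_i^{-\mu_j}$ in the orthogonal bialternant. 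With those corrections, the Cauchy--Binet (or lattice-path) route you sketch does lead to the stated identities.
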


Straightforward application to hook partitions yields the following. 

\begin{corollary}\label{coro: hookform}
   Let $\lambda_k^r=(k-r, 1^r)$ be a hook partition (\figref{fig: youngdia}) with $k-r\geq 1$ and $0\leq r\leq n-1$.  Then in type $\mathsf{B}_n\,(n\geq 2)$ or $\mathsf{D}_n\,(n\geq 4)$, we have 
 \[\chi(\lambda_k^r)=\mathrm{det}
\begin{pmatrix}
e_{r+1} & e_{r+2}+e_r & e_{r+3}+e_{r-1} & \cdots & e_k+e_{2r-k+2} \\[6pt]
1       & e_1         & e_2             & \cdots & e_{k-r-1} \\[6pt]
0       & 1           & e_1             & \cdots & e_{k-r-2} \\[6pt]
\vdots  & \vdots      & \vdots          & \ddots & \vdots \\[6pt]
0       & 0           & 0               & \cdots\quad  1 & e_1
\end{pmatrix},
\]
and in type $\mathsf{C}_n\,(n\geq 3)$, we have 
 \[\chi(\lambda_k^r)=\mathrm{det}
\begin{pmatrix}
e_{r+1}-e_{r-1} & e_{r+2}-e_{r-2} & e_{r+3}-e_{r-3} & \cdots & e_k-e_{2r-k} \\[6pt]
1       & e_1         & e_2             & \cdots & e_{k-r-1} \\[6pt]
0       & 1           & e_1             & \cdots & e_{k-r-2} \\[6pt]
\vdots  & \vdots      & \vdots          & \ddots & \vdots \\[6pt]
0       & 0           & 0               & \cdots \ 1 & e_1
\end{pmatrix}.
\]

\end{corollary}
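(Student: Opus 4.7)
The plan is to derive the two determinantal expressions as a direct specialisation of the Jacobi–Trudi-type identity recalled in the preceding proposition. The key combinatorial input is the conjugate of the hook: writing $\lambda_k^r=(k-r,1,1,\dots,1)$ with $r$ trailing $1$'s, its Young diagram has $r+1$ rows, and reading by columns shows that the first column has height $r+1$ while each of the remaining $k-r-1$ columns has height $1$. Hence $\lambda'=(r+1,1,1,\dots,1)$ with $k-r-1$ ones, so $\ell=\lambda_1=k-r$, $\lambda'_1=r+1$, and $\lambda'_i=1$ for $2\le i\le k-r$. With these data fixed, producing the matrices is just a matter of substituting into the formulas and using the convention $e_m=0$ for $m<0$.

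For \emph{type} $\mathsf{C}_n$, the formula gives $\chi(\lambda_k^r)=\det(e_{\lambda'_i-i+j}-e_{\lambda'_i-i-j})_{1\le i,j\le k-r}$. The first row ($i=1$) has entries $e_{r+j}-e_{r-j}$, which for $j=1,2,\dots,k-r$ produce exactly $e_{r+1}-e_{r-1},\,e_{r+2}-e_{r-2},\dots,e_k-e_{2r-k}$, matching the stated top row. For $i\ge 2$ one gets $e_{j-i+1}-e_{1-i-j}$; since $1-i-j\le -2<0$, the negative term vanishes and the entry reduces to $e_{j-i+1}$. This yields $0$ for $j<i-1$, the value $e_0=1$ on the subdiagonal $j=i-1$, and $e_{j-i+1}$ above it, reproducing precisely the almost-lower-triangular block displayed in the corollary.

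For \emph{types} $\mathsf{B}_n$ \emph{and} $\mathsf{D}_n$, the formula is $\chi(\lambda_k^r)=\tfrac12\det(e_{\lambda'_i-i+j}+e_{\lambda'_i-i-j+2})$. The first row has entries $e_{r+j}+e_{r-j+2}$; at $j=1$ this doubles to $2e_{r+1}$, and for $j=2,\dots,k-r$ it produces the stated entries $e_{r+2}+e_r,\,e_{r+3}+e_{r-1},\dots,e_k+e_{2r-k+2}$. For $i\ge 2$ the generic entry is $e_{j-i+1}+e_{3-i-j}$; now $3-i-j\le 0$ with equality only at $(i,j)=(2,1)$, so the second summand vanishes except at that one cell, where it contributes $e_0=1$ and combines with the first summand $e_0=1$ to give $2$. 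All other entries collapse to $e_{j-i+1}$, exactly as in the type $\mathsf{C}$ computation. Hence the raw Jacobi–Trudi matrix agrees with the matrix displayed in the corollary in every entry except the first column, where each entry is doubled. Pulling a factor of $2$ out of the first column of the determinant cancels the prefactor $\tfrac12$ and yields the claimed formula.

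The only real obstacle is careful bookkeeping: one must verify that the vanishing convention $e_m=0$ for $m<0$ kills precisely the terms needed to collapse the generic J--T entries to the pattern shown, and, in the $\mathsf{B}/\mathsf{D}$ case, check that the doubling phenomenon in the first column is uniform so that the $\tfrac12$ factor can be absorbed by a single column operation. Once these routine index checks are made, the corollary follows at once from the preceding proposition.
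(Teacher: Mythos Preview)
Your proposal is correct and follows exactly the approach the paper indicates: it states the corollary as a ``straightforward application to hook partitions'' of the Jacobi--Trudi formula in the preceding proposition, and your argument supplies precisely those routine index checks (computing the conjugate $(r+1,1^{k-r-1})$, substituting into the two formulas, and in the $\mathsf{B}/\mathsf{D}$ case absorbing the $\tfrac12$ by extracting the common factor of $2$ from the first column).
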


\subsection{Proof of the main result}\label{sec: proof}

The following is a direct consequence of Theorems \ref{thm: ChGtypeB}, \ref{thm: ChGtypeC} and \ref{thm: ChGtypeD}.
\begin{lemma}\label{lem: ChGexp}
 Maintain notation as in Theorems \ref{thm: ChGtypeB}, \ref{thm: ChGtypeC} and \ref{thm: ChGtypeD}. Let $\delta_{k,\, ev}$ be $1$ if $k$ is even and $0$ otherwise.  For all integers $1\leq k\leq n$, we have 
\begin{align*}
\text{Type $\mathsf{B}_n\,(n\geq 2)$:} \quad &\mathbf{Ch}\, G_{n,k} = \delta_{k,\, ev}\, q^{-k} + \sum_{r=0}^{k-1} (-1)^r q^{2n - 2r - 1} \chi(\lambda_k^{r}),\\
\text{Type $\mathsf{C}_n\,(n\geq 3)$:} \quad &\mathbf{Ch}\, G_{n,k} = -\delta_{k,\, ev}\, q^{-k} + \sum_{r=0}^{k-1} (-1)^r q^{2n - 2r} \chi(\lambda_k^{r}),\\
\text{Type $\mathsf{D}_n\,(n\geq 4)$:} \quad &\mathbf{Ch}\, G_{n,k} = \delta_{k,\, ev}\, q^{-k} + \sum_{r=0}^{k-1} (-1)^r q^{2n - 2 - 2r} \Big(\chi(\lambda_k^{r}) + \delta_{r,n-1} \chi(\bar{\lambda}_k^{n-1})\Big).
\end{align*}
\end{lemma}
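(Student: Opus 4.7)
The approach is to specialise Theorems \ref{thm: ChGtypeB}, \ref{thm: ChGtypeC} and \ref{thm: ChGtypeD} to the restricted range $1\le k\le n$ and to verify that the multiple case splits appearing in those theorems collapse into the single unified expressions stated in the lemma. No fresh representation-theoretic input is required; the work consists of a careful bookkeeping of indices and signs.

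First, I would note that for $1\le k\le n$ the upper truncation bound in each of the three theorems is trivialised. Indeed, in type $\mathsf{B}_n$ we have $k-1\le n-1\le 2n-2$, hence $\min\{k-1,2n-1\}=k-1$; analogously $\min\{k-1,2n\}=k-1$ in type $\mathsf{C}_n$, and $\min\{k-1,2n-2\}=k-1$ in type $\mathsf{D}_n$ (the last inequality uses $n\ge 4$, though $n\ge 2$ already suffices). Thus in all three types the summation over $r$ runs exactly from $0$ to $k-1$, matching the form claimed in the lemma.

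Second, I would unify the parity-based case distinction. In type $\mathsf{B}_n$, the theorem separates the ``$k$ odd'' branch (which drops the $q^{-k}$ term) from the ``otherwise'' branch (which keeps $q^{-k}$); since ``otherwise'' for $k\ge 1$ means $k$ even, the two branches together are captured by the prefactor $\delta_{k,\,ev}\,q^{-k}$. The same reasoning applies verbatim in type $\mathsf{D}_n$. In type $\mathsf{C}_n$ the roles of odd and even are reversed: the $-q^{-k}$ term appears precisely when $k$ is even, yielding the prefactor $-\delta_{k,\,ev}\,q^{-k}$. For type $\mathsf{C}_n$ I additionally need to observe that the sign $\tau_r$ appearing in \thmref{thm: ChGtypeC} equals $+1$ throughout the range $0\le r\le k-1\le n-1$ (by definition of $\tau_r$), so it can be suppressed; the possibly cancellative value $\tau_n=0$ and the negative values $\tau_r=-1$ for $r\ge n+1$ never enter when $k\le n$.

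Putting these three observations together and substituting them into the formulas of the three theorems yields exactly the stated expressions for $\mathbf{Ch}\,G_{n,k}$ in types $\mathsf{B}_n$, $\mathsf{C}_n$ and $\mathsf{D}_n$. There is no substantive obstacle: the only point requiring attention is to confirm that $k\le n$ keeps $r$ strictly below the threshold where $\tau_r$ flips sign in type $\mathsf{C}_n$ and below the threshold where the ``spin-side'' weight $\bar{\lambda}_k^{n-1}$ in type $\mathsf{D}_n$ could fail to be dominant; both conditions are manifest from $r\le k-1\le n-1$, so the lemma follows directly.
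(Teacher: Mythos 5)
Your proposal is correct and matches the paper, which states \lemref{lem: ChGexp} as a direct consequence of Theorems \ref{thm: ChGtypeB}, \ref{thm: ChGtypeC} and \ref{thm: ChGtypeD} without further argument; your bookkeeping of the truncation bounds, the parity cases, and the sign $\tau_r$ correctly fills in the routine details. The only (harmless) superfluous point is the dominance check for $\bar{\lambda}_k^{n-1}$, which is not needed since the theorems are already stated as character identities with the $r=n-1$ term entering only when $k\geq n$.
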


\begin{proposition}\label{prop: algindep}
In type $\mathsf{B}_n\, (n\geq 2), \mathsf{C}_n\,(n\geq 3)$ or $\mathsf{D}_n\,(n\geq 4)$, for each $k=1, \dots ,n$, there exist a nonzero  $c_k\in \mathbb{K}$ and a polynomial $Q_{k}\in \mathbb{K}[t_1, \dots , t_{k-1}]$ such that 
\[e_{k}= c_k g_k + Q_k(g_1, \dots, g_{k-1}),\]
where $g_r:= {\bf Ch}\, G_{n,r}$ and $e_{r}= {\bf Ch}(\bigw^r V)$ for $1\leq r\leq n$. Moreover, the elements $g_1, \dots, g_n$ are algebraically independent over $\mathbb{K}$. Therefore, we have $\mathbb{K}[g_1, \dots, g_n]= \mathbb{K}[e_1, \dots, e_n]$. 
\end{proposition}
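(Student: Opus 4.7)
The plan is to establish a triangular relation of the form $g_k=c_k^{-1}e_k+P_k(e_1,\dots,e_{k-1})$ with $c_k^{-1}\in\mathbb{K}\setminus\{0\}$ and $P_k$ a polynomial; the stated identity for $e_k$ then follows by inverting the triangular system by induction on $k$, and the algebraic independence of $g_1,\dots,g_n$ drops out as a formal consequence.

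First I would combine \lemref{lem: ChGexp} with the Jacobi-Trudi identity of \corref{coro: hookform} to show that $g_k\in\mathbb{K}[e_1,\dots,e_k]$. Indeed, \lemref{lem: ChGexp} expresses $g_k$, up to an additive constant in $\mathbb{K}$, as a $\mathbb{K}$-linear combination of the hook characters $\chi(\lambda_k^r)$, $0\le r\le k-1$, with an additional term $\chi(\bar\lambda_k^{\,n-1})$ at $r=n-1$ in type $\mathsf{D}_n$; each such hook character lies in $\mathbb{K}[e_1,\dots,e_k]$ since the largest elementary symmetric function appearing in the Jacobi-Trudi matrix sits at position $(1,k-r)$ and equals $e_k$. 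For type $\mathsf{D}_n$ at $k=n$, $r=n-1$, the identity $\chi(\lambda_n^{n-1})+\chi(\bar\lambda_n^{\,n-1})=\chi(2\varpi_n)+\chi(2\varpi_{n-1})=e_n$ coming from part (3) of \lemref{lem: extpower} handles the paired term. To extract $[e_k]\,g_k$, I would note that $e_k$ appears in the Jacobi-Trudi matrix only at the $(1,k-r)$-entry with coefficient $1$, and its cofactor is the determinant of a unipotent upper triangular block and so equals $1$; hence $[e_k]\,\chi(\lambda_k^r)=(-1)^{k-r-1}$. Combined with the sign $(-1)^r$ from \lemref{lem: ChGexp}, the parity reduces to a uniform $(-1)^{k-1}$, and the $r$-summation collapses to a finite geometric series in $q^{\pm 2}$ with $k$ nonzero terms, visibly nonzero in $\mathbb{K}=\mathbb{C}(\hat q)$. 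I then set $c_k:=([e_k]\,g_k)^{-1}\in\mathbb{K}^\times$.

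With $c_k$ nonzero, I would conclude by induction on $k$: assuming $e_j\in\mathbb{K}[g_1,\dots,g_j]$ for $j<k$, rewrite $g_k-c_k^{-1}e_k\in\mathbb{K}[e_1,\dots,e_{k-1}]$ in the generators $g_1,\dots,g_{k-1}$ and solve, producing $e_k=c_k g_k+Q_k(g_1,\dots,g_{k-1})$ with $Q_k\in\mathbb{K}[t_1,\dots,t_{k-1}]$. The two-sided inclusion $\mathbb{K}[g_1,\dots,g_n]=\mathbb{K}[e_1,\dots,e_n]$ is then automatic, and algebraic independence of $g_1,\dots,g_n$ follows from algebraic independence of $e_1,\dots,e_n$, which itself is a consequence of \propref{prop: charring} together with the triangular expressions of $\chi(\varpi_r)$ in terms of the $e_j$ recorded just above \corref{coro: hookform}. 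I expect the main obstacle to be the coefficient calculation in the second paragraph: extracting the sign $(-1)^{k-r-1}$ uniformly from Jacobi-Trudi, tracking the effect of the $\pm\delta_{k,\,ev}q^{-k}$ constants together with the $\bar\lambda$-correction in type $\mathsf{D}_n$, and confirming nonvanishing of the resulting geometric sum uniformly across all three types.
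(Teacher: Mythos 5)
Your proposal is correct and follows essentially the same route as the paper: expand the Jacobi--Trudi determinant of \corref{coro: hookform} along its first row to get $\chi(\lambda_k^r)=(-1)^{k-r+1}e_k+(\text{lower})$, observe via \lemref{lem: ChGexp} that the coefficient of $e_k$ in $g_k$ is $(-1)^{k+1}$ times a nonvanishing geometric sum, handle the $\bar\lambda_n^{\,n-1}$ term in type $\mathsf{D}$ using $e_n=\chi(\lambda_n^{n-1})+\chi(\bar\lambda_n^{\,n-1})$, and invert the triangular system by induction. The only cosmetic difference is that the paper deduces algebraic independence of $g_1,\dots,g_n$ from a Jacobian computation rather than from the transcendence-degree argument you suggest; both are standard and equivalent here.
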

\begin{proof}
We treat type $\mathsf{B}_n$ first. The proof proceeds by induction on $k$.  For $k=1$, by \lemref{lem: ChGexp},  we have $g_1= q^{2n-1}\chi(\lambda_{1}^0)=q^{2n-1}e_1$, so $c_1= q^{1-2n}$ and $Q_1=0$. 
For $k>1$, using \corref{coro: hookform} and expanding the determinant along the first row, we have
\[ \chi(\lambda_k^r)=(-1)^{k-r+1} e_k + Q_{k,r}(e_1, \dots, e_{k-1}), \quad 0\leq r\leq k-1,  \]
where each $Q_{k,r}\in \mathbb{Z}[t_1, \dots, t_{k-1}]$. Substituting into the formula  in \lemref{lem: ChGexp},  we have 
\begin{align*}
  g_k&=\delta_{k, \, ev}q^{-k}+ \sum_{r=0}^{k-1} (-1)^r q^{2n - 2r - 1} \chi(\lambda_k^{r})\\ 
    &=\delta_{k, \, ev}q^{-k}+ (-1)^{k+1}e_k \sum_{r=0}^{k-1} q^{2n - 2r - 1}+ \sum_{r=0}^{k-1}(-1)^r q^{2n-2r-1} Q_{k,r}(e_1, \dots, e_{k-1}). 
\end{align*}
Let $c_{k}= (-1)^{k+1}(\sum_{r=0}^{k-1}q^{2n-2r-1})^{-1}$. Then we solve for $e_k$ and obtain 
\[
    e_{k}= c_kg_k - \delta_{k,\, ev} q^{-k}c_k - c_k\sum_{r=0}^{k-1}(-1)^r q^{2n-2r-1} Q_{k,r}(e_1, \dots, e_{k-1})
\]
 By the induction hypothesis, each $e_j$ with $j<k$ lies in $\mathbb{K}[g_1, \dots, g_{k}]$, we have $e_k= c_kg_k + Q_k(g_1, \dots, g_{k-1})$ for some polynomial $Q_k\in \mathbb{K}[t_1, \dots, t_{k-1}]$.

 Moreover, it is well known that $e_1, \dots, e_n$ are algebraically independent. Since each $g_k$ is of the form $g_k= c_k^{-1} e_k + R_k(e_1, \dots, e_{k-1})$ with $c_k\neq 0$ for some polynomial $R_k$, the Jacobian determinant $\mathrm{det}(\partial_{e_j}g_i)_{1\leq i,j \leq n}= c^{-1}_1\cdots c^{-1}_n$ is nonzero, so the elements $g_1, \dots, g_n$ are algebraically independent (see, e.g., \cite[Theorem 11.4]{Lef53}). Therefore, we have $\mathbb{K}[g_1, \dots , g_n]=\mathbb{K}[e_1, \dots, e_n]$.

The same argument applies to types $\mathsf{C}$ and $\mathsf{D}$ with minor changes.  The only additional case to handle is  in type $\mathsf{D}$ with  $k=n$ and $r=n-1$. In this case, it follows from  \lemref{lem: extpower} that 
\[
e_n= \chi(\lambda_{n}^{n-1})+ \chi(\bar{\lambda}_{n}^{n-1}). 
\]
Substituting into the formula in \lemref{lem: ChGexp},  we obtain 
\[
    g_n=  \delta_{n,\, ev}\, q^{-n} + \sum_{r=0}^{n-2} (-1)^r q^{2n - 2 - 2r} \chi(\lambda_n^{r}) + (-1)^{n-1}e_n. \]
As before, each $\chi(\lambda_n^r)$ for $r \leq n - 2$ is expressible as a polynomial in $e_1,\dots,e_{n}$, hence we can solve for $e_n$ and the coefficient of $g_n$ is nonzero. The rest  proceeds as in type $\mathsf{B}_n$.
\end{proof}

We are in position to prove the main result. 

\begin{proof}[Proof of \thmref{thm: CentreGen}]
By the Harish-Chandra isomorphism, it is equivalent to proving the corresponding statements for the $W$-invariant algebra $(\mathrm{U}_{ev}^0)^W$, which is isomorphic to the character ring $\mathbb{K}[P]^W$ via sending  $C^0_{\varpi_i}$ to $\chi(\varpi_i)$ for all $1\leq i\leq n$. By \propref{prop: charring}, $\mathbb{K}[P]^W$ is a polynomial algebra generated by the characters $\chi(\varpi_1), \chi(\varpi_2), \dots, \chi(\varpi_n)$. 

By \corref{coro: C^0_nl},  the two families  $C^0_{n,1}, \dots, C^0_{n,n} $  and $G_{n,1}, \dots, G_{n,n}$  generate the same $\mathbb{K}$-subalgebra of $(\mathrm U^0_{ev})^W$. Passing to the character ring via $\mathbf{Ch}$, \propref{prop: algindep} yields
\[\mathbb{K}[g_1,\dots, g_n] =\mathbb K[e_1,\dots,e_n]\subseteq \mathbb K[P]^W\]
 where $g_k=\mathbf{Ch}(G_{n,k})$  and  $e_k=\mathbf{Ch}(\bigw^k V)$ for the natural module $V$ for $1\leq k\leq n$.

We now conclude case by case, using the algebraic structure of $\mathbb{K}[P]^W$ in each type. 

Type $\mathsf{B}_n\, (n\geq 2)$: By \propref{prop: charring} and \lemref{lem: extpower}, one has \[\mathbb{K}[P]^W=\mathbb{K}[e_1,\dots,e_{n-1},\chi(\varpi_n)].\]
 In view of \propref{prop: algindep}, we have $\mathbb{K}[e_1,\dots,e_{n-1}]=\mathbb{K}[g_1, \dots, g_{n-1}]$. It follows that $(\mathrm U^0_{ev})^W=\mathbb{K}[C^0_{n,1},\dots,C^0_{n,n-1},\,C^0_{\varpi_n}].$ Pulling back along the Harish-Chandra isomorphism, the central elements $C_{n,1},\dots,C_{n,n-1}, C_{\varpi_{n}}$ generate $\mathcal{Z}(\mathrm{U}_q(\mathfrak g))$.

Type $\mathsf{C}_n\,(n\geq 3)$: By \propref{prop: charring} and \lemref{lem: extpower},   one has $\mathbb{K}[P]^W=\mathbb{K}[e_1,\dots,e_n]$. Therefore the images $C^0_{n,1},\dots,C^0_{n,n}$ generate $(\mathrm U^0_{ev})^W$. Pulling back along the Harish-Chandra isomorphism, the central elements $C_{n,1},\dots,C_{n,n}$ generate $\mathcal{Z}(\mathrm{U}_q(\mathfrak g))$. 

Type $\mathsf{D}_n\,(n\geq 4)$: By \propref{prop: charring} and \lemref{lem: extpower}, one has \[\mathbb{K}[P]^W=\mathbb{K}[e_1,\dots,e_{n-2},\chi(\varpi_{n-1}), \chi(\varpi_n)].\]
By  \propref{prop: algindep}, we have $ \mathbb{K}[e_1,\dots,e_{n-2}]=\mathbb{K}[g_1, \dots, g_{n-2}]$. It follows that 
$(\mathrm U^0_{ev})^W=\mathbb{K}[C^0_{n,1},\dots,C^0_{n,n-2},\,C^0_{\varpi_{n-1}},\,C^0_{\varpi_n}]$,
and pulling back along the Harish-Chandra isomorphism yields the  central generators $C_{n,1},\dots,C_{n,n-2},C_{\varpi_{n-1}},C_{\varpi_n}$.
\end{proof}

\begin{remark}
 In type $\mathsf{A}_n\ (n\geq 1)$, the centre of $\mathrm{U}_q(\mathfrak{sl}_{n+1})$ is generated by the higher-order quantum Casimir elements $C_{n,1}, C_{n,2}, \dots, C_{n,n}$ associated with the natural representation \cite{Li10}.  This follows by the same reasoning as in \lemref{lem: ChGexp} and \propref{prop: algindep}. In fact, \cite[Lemma 3.3]{Li10} is the type~$\mathsf{A}$ analogue of \lemref{lem: ChGexp} and is proved by a more direct method that does not involve the auxiliary elements $H_{n,k}$. Nonetheless, the same result can also be recovered from our approach, using the elements $H_{n,k}$ and following the method in the proof of \thmref{thm: ChGtypeB}.
\end{remark}

We conclude this paper with a remark on \lemref{lem: ChGexp}. 

\begin{remark}\label{rem: stab}
Fix an integer $k\geq 1$. In types $\mathsf{B}_n$ and $\mathsf{C}_n$, for all $n\geq k$, the set of irreducible constitutes of $\mathbf{Ch}\,G_{n,k}$ is independent of $n$: namely, the trivial character (present when $k$ is even) together with  $\chi(\lambda_k^r)$, where
\[
\lambda_k^r= (k-r)\varepsilon_1+ \varepsilon_2+ \cdots + \varepsilon_{r+1}, \quad 0\leq r\leq k-1,
\]
each appearing with multiplicity one.  In type $\mathsf{D}_n$, the same stability  holds for all $n>k$. This may be viewed as a quantum analogue of the  representation stability phenomenon in the sense of Church and Farb \cite{CF13}, with the caveat that the powers of $q$ accompanying the irreducible constituents  of $\mathbf{Ch}\,G_{n,k}$ do not yet have a natural conceptual interpretation.
\end{remark}


\begin{thebibliography}{ZZZ00}
\setlength{\itemsep}{3pt}      
\setlength{\labelsep}{0em}   
\setlength{\leftmargin}{4.8em}   
\setlength{\labelwidth}{4.8em}   
\setlength{\itemindent}{0pt}   
\setlength{\listparindent}{0pt}


\bibitem[Bau98]{Bau98}
P.~Baumann. 
\emph{On the center of quantized enveloping algebras.}
J. Algebra. 203 (1998), 244-260.

\bibitem[Bax82]{Bax82}
R.~J.~Baxter.
\emph{Exactly solved models in statistical mechanics. }
Academic Press, Inc. [Harcourt Brace Jovanovich, Publishers], London, 1982, xii+486 pp.


\bibitem[CF13]{CF13}
T.~Church, B.~Farb.
\emph{Representation theory and homological stability.} Adv. Math. 245 (2013), 250–314.


\bibitem[CGRS16]{CGRS16}
G.~Carinci, C.~Giardin\`a, F.~Redig, T.~Sasamoto.  \emph{Asymmetric stochastic transport models with $\mathrm{U}_q(\mathfrak{su}(1,1))$ symmetry}, J. Stat. Phys. 163 (2016), 239-279.

\bibitem[Dai22]{Dai22}
Y.~Dai. 
\emph{Explicit generators of the centre of the quantum group.} 
Commun. Math. Stat. 11 (2023), no. 3, 541-562.


\bibitem[Dri87]{Dri87}
V.~G.~Drinfeld. 
\emph{Quantum groups.} 
Proceedings of the International Congress of Mathematicians, Vol. 1, 2 (Berkeley, Calif., 1986), 798–820, Amer. Math. Soc., Providence, RI, 1987.

\bibitem[Dri90]{Dri90}
V.~G.~Drinfeld. 
\emph{Almost cocommutative Hopf algebras.} Leningrad Math. J. 1 (1990), 321-342.

\bibitem[DGL05]{DGL05}
K.~A.~Dancer, M.~D.~Gould, J.~Links. 
\emph{Eigenvalues of Casimir invariants for $U_q[osp(m|n)]$.} 
J. Math. Phys. 46 (2005), no. 12.

\bibitem[DMS97]{DMS97}
P.~Di Francesco, P.~Mathieu, D.~S\'en\'echal.
\emph{Conformal field theory.} Grad. Texts Contemp. Phys.
Springer-Verlag, New York, 1997, xxii+890 pp.

\bibitem[Eti99]{Eti99}
P. Etingof.
 \emph{Whittaker functions on quantum groups and  q -deformed Toda operators.}
Amer. Math. Soc. Transl. Ser. 2, 194, Adv. Math. Sci., 44, American Mathematical Society, Providence, RI, 1999, 9-25.


\bibitem[FH13]{FH13}
W.~Fulton, J.~Harris. 
\emph{Representation theory: a first course.} 
Graduate Texts in Mathematics, Vol. 129. Springer-Verlag, 2013.

\bibitem[FRT90]{FRT90}
L.~D.~Faddeev, N.~Yu.~Reshetikhin,  L.~A.~Takhtajan. \emph{Quantization of Lie groups and Lie algebras.} Leningrad Math. J. 1 (1990), 193-225.


\bibitem[GZB91]{GZB91}
M.~D.~Gould, R.~B.~Zhang, A.~J.~Bracken. 
\emph{Generalized Gelfand invariants and characteristic identities for quantum groups.} 
J. Math. Phys. 32 (1991), no. 9, 2298–2303.


\bibitem[Jan96]{Jan96}
J.~C.~Jantzen. 
\emph{Lectures on quantum groups.} 
Graduate Studies in Mathematics, 6. Amer. Math. Soc., Providence, RI, 1996.

\bibitem[Jim85]{Jim85}
M.~Jimbo. 
\emph{A $q$-difference analogue of ${\rm U}(\mathfrak{g})$ and the Yang–Baxter equation.} 
Lett. Math. Phys. 10 (1985), 63–69.


\bibitem[Jim86]{Jim86}
M.~Jimbo. 
\emph{A q-analogue of $\mathrm{U}(\mathfrak{gl}(N +1))$, Hecke algebra, and the Yang–Baxter equation.} 
Lett. Math. Phys. 11 (1986), 247-252.


\bibitem[JL94]{JL94}
A.~Joseph and G.~Letzter. \emph{Separation of variables for quantized enveloping algebras.} Amer. J. Math. 116 (1994), 127-177.


\bibitem[Kin71]{Kin71}
R.~C.~King. \emph{Modification rules and products of irreducible representations of the unitary, orthogonal, and symplectic groups.} J. Math. Phys. 12 (1971), 1588-1598.


\bibitem[Kua18]{Kua18}
J.~Kuan. \emph{A multi-species ASEP$(q,j)$ and $q-$TAZRP with stochastic duality.} Int. Math. Res. Not. 2018 (2018), 5378-5416.


\bibitem[KS12]{KS12}
A.~Klimyk, K.~Schmüdgen. 
\emph{Quantum groups and their representations.} 
Springer, 2012.


\bibitem[KZ23]{KZ23}
J.~Kuan, K.~Zhang.
\emph{Explicit central elements of  $\mathrm{U}_q(\mathfrak{gl}(N+1))$.}   SIGMA Symmetry Integrability Geom. Methods Appl. 19 (2023), Paper No. 036, 13 pp.


\bibitem[Lef53]{Lef53}
S.~Lefschetz. 
\emph{Algebraic Geometry.} 
Princeton University Press, Princeton, NJ, 1953, ix+233 pp.

\bibitem[Li10]{Li10}
J.~Li. 
\emph{The quantum Casimir operators of $\U_q(\gl_n)$ and their eigenvalues.} 
J. Phys. A: Math. Theor. 43 (2010), no. 34, 345202.

\bibitem[LWY22]{LWY22}
Y.~Luo, Y. Wang, Y. Ye. 
\emph{On the Harish-Chandra homomorphism for quantum superalgebras.} Comm. Math. Phys. 393 (2022), no. 3, 1483-1527.

\bibitem[LXZ18]{LXZ18}
L.~Li, L.~Xia, Y.~Zhang. 
\emph{On the center of the quantized enveloping algebra of $A_n$-type.} 
Sci. China Math. 61 (2018), no. 2, 287-294.


\bibitem[LZ93]{LZ93}
R.~B.~Zhang, J.~R.~Links. \emph{Eigenvalues of Casimir invariants of $\mathrm{U}_q(\mathfrak{gl}_{m|n})$.}
J. Math. Phys. 34 (1993), no. 12, 6016-6024.


\bibitem[Res90]{Res90}
N.~Reshetikhin. \emph{Quasitriangle Hopf algebras and invariants of tangles.} Leningrad. Math. J. 1 (1990), 491-513.

\bibitem[Ros90]{Ros90}
M.~Rosso. \emph{Analogues de la forme de Killing et du th\'eormè d'Harish-Chandra pour les groupes quantiques.} Ann. Sci. École Norm. Sup. (4) 23 (1990), no. 3, 445-467.


\bibitem[RT91]{RT91}
N.~Reshetikhin, V.~G.~Turaev. 
\emph{Invariants of  3-manifolds via link polynomials and quantum groups.} Invent. Math. 103 (1991), no. 3, 547-597.

\bibitem[Tan92]{Tan92}
T.~Tanisaki. 
\emph{Killing forms, Harish-Chandra isomorphisms, and universal $R$-matrices for quantum algebras.} 
Int. J. Mod. Phys. A 7 (1992), 941–961.


\bibitem[ZGB91]{ZGB91}
R.~B.~Zhang, M.~D.~Gould, A.~J.~Bracken. 
\emph{Quantum group invariants and link polynomials.} 
Comm. Math. Phys. 137 (1991), no. 1, 13–27.


\bibitem[ZG91]{ZG91}
R.~B.~Zhang, M.~D.~Gould. 
\emph{Universal  R-matrices and invariants of quantum supergroups.} J. Math. Phys. 32 (1991), no. 12, 3261-3267.

\end{thebibliography}
\end{document}